\newtheorem{theorem}{Theorem}
\newtheorem{lemma}[theorem]{Lemma}
\newtheorem{remark}[theorem]{Remark}
\newtheorem{corollary}[theorem]{Corollary}
\newtheorem{proposition}[theorem]{Proposition}
\newcommand{\E}{\mathbb{E}}
\newcommand{\F}{\mathbb{F}}
\newcommand{\R}{\mathbb{R}}
\newcommand{\N}{\mathbb{N}}
\newcommand{\U}{\mathbb{U}}
\newcommand{\Div}{\mathop{\mbox{\rm div}}}
\title{Differentiability results and sensitivity calculation for optimal control of incompressible two-phase Navier-Stokes equations with surface tension}
\author{Elisabeth Diehl\thanks{Dep. of Mathematics, TU Darmstadt,
Dolivostr. 15, 64293 Darmstadt, Germany,
diehl@mathematik.tu-darmstadt.de},
Johannes Haubner\thanks{Dep. of Mathematics, TU M\"unchen, Boltzmannstr. 3, 85748 Garching b.
M\"unchen, haubnerj@ma.tum.de}, Michael Ulbrich\thanks{Dep. of Mathematics, TU M\"unchen,
Boltzmannstr. 3, 85748 Garching b. M\"unchen, mulbrich@ma.tum.de},
Stefan Ulbrich\thanks{Dep. of Mathematics, TU Darmstadt,
Dolivostr. 15, 64293 Darmstadt, Germany, stefan.ulbrich@tu-darmstadt.de}}
\begin{document}
\maketitle
\abstract{We analyze optimal control problems for two-phase Navier-Stokes equations with surface
tension. Based on $L_p$-maximal regularity of the underlying linear problem and
recent well-posedness results of the problem for sufficiently small data we
show the differentiability of the solution with respect to initial and
distributed controls for appropriate spaces resulting form the $L_p$-maximal regularity
setting. We consider first a formulation where the interface
is transformed to a hyperplane. Then we deduce differentiability results for
the solution in the physical coordinates. Finally, we state an equivalent
Volume-of-Fluid type formulation and use the obtained differentiability
results to derive rigorosly the corresponding sensitivity equations of the Volume-of-Fluid type
formulation. For objective functionals involving the velocity field or the
discontinuous pressure or phase indciator field we derive differentiability
results with respect to controls and state formulas for the derivative.
The results of the paper form an analytical foundation for stating optimality
conditions, justifying the application of derivative based optimization methods
and for studying the
convergence of discrete sensitivity schemes based on Volume-of-Fluid
discretizations for optimal control of two-phase Navier-Stokes equations.}
\smallskip

{\bf Keywords.~ }{two-phase flow, surface tension, sharp interface, Navier-Stokes
equations, volume of fluid, differentiability, optimal control}
{\large \section{Introduction}}
We consider the incompressible sharp interface two-phase Navier-Stokes
equations. To this end, let 
the hypersurface (interface) $\Gamma(t)$ divide $\R^{n+1}$ into two open domains
$\Omega_1(t)$ and $\Omega_2(t)=\R^{n+1}\setminus \overline{\Omega_1}(t)$, $i=1,2$,
occupied by two viscous
incompressible immiscible capillary Newtonian fluids with constant densities
$\rho_i>0$ and constant viscosities $\mu_i>0$, $i=1,2$. We set
\[
 \Omega(t):=\Omega_1(t)\cup \Omega_2(t)
\]
and with the indicator functions $1_{\Omega_i}$
\[
  \rho=\rho_1 1_{\Omega_1}+\rho_2 1_{\Omega_2},\quad
  \mu=\mu_1 1_{\Omega_1}+\mu_2 1_{\Omega_2}.
\]
Moreover, we denote by $\nu(t,\cdot)$ the normal field on $\Gamma(t)$
pointing form $\Omega_1(t)$ to $\Omega_2(t)$, by $V(t,\cdot)$ the normal velocity
of the interface $\Gamma(t)$ and by $\kappa(t,\cdot)$ the mean curvature of $\Gamma(t)$ with respect to
$\nu(t,\cdot)$. Then $\kappa(t,x)$ is negative when $\Omega_1(t)$ is convex
close to $x\in \Gamma(t)$ and is for sufficiently smooth $\Gamma(t)$
given by
\[
 \kappa(t,\cdot)=-\Div\nolimits_\Gamma \nu(t,\cdot)
\]
(note that this coincides with $-\Div \nu(t,\cdot)$ if $\nu(t,\cdot)$
admits a differentiable extension to a neighborhood of $\Gamma(t)$).
Finally, if $v$ is defined and admits boundary traces on both domains
$\Omega_i(t)$ then
\[
 [v]=(v|_{\Omega_2(t)}-v|_{\Omega_1(t)})|_{\Gamma(t)}
\]
denotes the jump of $v$ accross $\Gamma(t)$.
The two-phase Navier-Stokes equations with surface tension
then read
\begin{align}\label{P}
\begin{aligned}
 \rho (\partial_t u+u\cdot\nabla u)-\mu \Delta u+\nabla q &=c &&
\mbox{in $\Omega(t)$,}\\
\Div u&=0 &&\mbox{in $\Omega(t)$,}\\
 -[S(u,q;\mu) \nu]&=\sigma\kappa \nu && \mbox{on $\Gamma(t)$,}\\
 [u]&=0 && \mbox{on $\Gamma(t)$,}\\
 V&=u^\top \nu &&\mbox{on $\Gamma(t)$,}\\
u(0)&=u_0&&\mbox{on $\Omega(0)$,}\\ 
 \Gamma(0)&=\Gamma_0.
\end{aligned}
\end{align}
with the stress tensor $S(u,q;\mu)=-qI+\mu (\nabla u+\nabla u^\top)$
and the surface tension coefficient $\sigma>0$.
Here, $c$ denotes some control.

The conditions on the interface ensure 
that the surface tension balances the jump of the normal stress on the interface
the balance of surface tension and the jump of the normal stress on the
interface, the continuity of the velocity across the interface and the
transport of the interface by the fluid velocity.

We note that the first four equations can be written in weak form on the
whole domain by
\begin{align}\label{weak1}
&\int_{\R^{n+1}} \left(\partial_t (\rho u)+\Div(\rho u\otimes u)-c)^\top \varphi+S(u,q;\mu):\nabla\varphi\right)\,dx
=\int_{\Gamma(t)} \sigma\kappa \nu^\top
\varphi\,dS(x)\\\notag
&\hspace{.6\textwidth}\forall\,\varphi\in C_c^1(\R^{n+1};\R^{n+1}),\\\label{weak2}
&\int_{\R^{n+1}} \Div(u) \,\psi\,dx=0\quad\forall\,\psi\in C_c^1(\R^{n+1}).
\end{align}

Our aim is to study the differentiability properties of local solutions
with respect to $u_0$ and $c$. To this end, we will
work in an $L_p$-maximal regularity setting proposed in
\cite{PruessSimonett1}, see also \cite{Koehneetal, PruessSimonett2}.

There exist several papers on the existence and uniqueness of local
solutions for \eqref{P}. In \cite{Denisova, Denisova1, Shibata, Tanaka}
Lagrangian coordinates are used to obtain local well-posedness. Since this
approach makes it difficult to establish smoothing of the unknown interface,
\cite{Koehneetal, PruessSimonett1, PruessSimonett2} use a transformation to a
fixed domain and are then able to show  local well-posedness in an $L_p$
maximum regularity setting for the case $c=0$
\cite{Koehneetal, PruessSimonett1} or for the case
of gravitation \cite{PruessSimonett2}. Moreover,
they prove that the interface as well as the
solution become instantaneously real analytic. Since we are
considering a distributed control $c$ of limited regularity, the instant
analyticity is in general lost.

While optimal control problems for the Navier-Stokes equations have been
studied by many researchers, see for
example \cite{Fursikov,Gunzburger,HinzeKunisch,UlbrichNS}, there are only a few
contributions in the context of two-phase Navier-Stokes equations, mainly
for phase-field formulations with semidiscretization in time. In
\cite{Hintermueller1} optimal boundary control of a time-discrete
Cahn-Hilliard-Navier-Stokes system with matched densities is studied. By using regularization
techniques, existence of optimal solutions and optimality conditions are
derived. Analogous results for distributed optimal control with unmatched
densities for the diffuse interface model of \cite{Abelsetal} have been
obtained in \cite{Hintermueller2}. Using the
same model, \cite{Garckeetal1} derive based on the stable time discretization proposed in
\cite{Garckediscr} necessary optimality conditions
for the time-discrete and the fully discrete optimal control problem. Moreover,
the differentiability of the control-to-state mapping
for the semidiscrete problem is shown. Optimal control of a binary fluid
described by its density distribution, but without surface tension, is
studied in \cite{Banas}. Different numerical approaches for the optimal
control of two-phase flows are discussed in \cite{Braack}.

In this paper we derive differentiability results of the solution of the
two-phase Navier-Stokes equations \eqref{P} with respect to controls. The
results can be used to state optimality conditions and to justify the
application of derivative based optimization methods.
To the best of
our knowledge, this is the first work providing differentiability properties
of control-to-state mappings for sharp interface models of two-phase
Navier-Stokes flow. The analysis is involved, since
the moving interface renders a variational analysis difficult. Therefore it
is beneficial, to first consider a transformed problem with fixed interface.
However, since most numerical approaches are working in physical coordinates,
we derive also differentiability results for the original problem. Since the
normal derivative of the velocity is in general discontinuous at the interface,
the sensitivities of the velocity are discontinuous across the interface.
Moreover, the pressure is in general discontinuous at the interface and
thus differentiability properties with respect to controls in strong
spaces hold only away from the interface while at the interface
differentiability properties can only be expected in the weak topology of
measures. The same applies to phase indicators which are often used in
Volume-of-Fluid (VoF)-type approaches.
In order to obtain a PDE-formulation for the sensitivity equations, we work
with a Volume-of-Fluid (VoF)-type formulation based on a discontinuous phase
indicator and derive carefully a corresponding sensitivity equation.

We build on the quite recent existence and uniqueness results obtained for sufficiently
small data by \cite{PruessSimonett1}, see also \cite{Koehneetal, PruessSimonett2}.
We consider first a formulation, where the interface is transformed to a
hyperplane. By using $L_p$-maximal regularity of a linear system and
applying a refined version of a fixed point theorem, we show
differentiability of the transformed state with respect to controls in the
maximum regularity spaces. A similar technique was recently used in
\cite{HUU} to show differentiability properties for shape optimization of
fluid-structure interaction, but the analysis of the fixed
point iteration is very different from two-phase flows considered here.
In fact, the main difficulties in fluid-structure interaction arise from the
coupling of a hyperbolic equation for the solid with the Navier-Stokes
equations for the fluid while in two phase flows the moving interface and
the surface tension are the main challenge.
In a second step we deduce differentiability results for
the control-to-state map in the physical coordinates. Finally, we derive an
equivalent Volume-of-Fluid (VoF)-type formulation based on a discontinuous phase
indicator that is governed by a multidimensional transport equation. By
using the obtained differentiability results, we are able to justify a
sensitivity system for the VoF-type formulation, which invokes
measure-valued solutions of the linearized transport equation. This can be
used as an analytical foundation to study the convergence of discrete
sensitivity schemes for VoF-type methods. Moreover, we obtain the
differentiability of objective functionals invoking the velocity field or
the discontinuous pressure or phase indicator field and state formulas for
the derivative.

The paper is organized as follows. In section \ref{sec:2}, the transformed
problem is formulated. In section \ref{sec:3}, existence, uniqueness and
differentiability of the control-to-state mapping is shown. The analysis
starts in \ref{sec:31} for the transformed problem with flat interface. In
\ref{sec:32} differentiability results for the original problem in physical
coordinates are derived. In \ref{sec:33} the VoF-type formulation and its
sensitivity equation are justified. In section \ref{sec:4} we derive some
analytical settings for the application of optimization methods.
In \ref{sec:41} we consider objective functionals involving the velocity
field and state differentiability results. Subsequently, we discuss
in \ref{sec:42} objective functionals involving the pressure field or the
phase indicator, obtain their differentiability with respect to controls
as well as a formula for the derivative.

\section{Transformation to a flat interface}\label{sec:2}
In this paper, we consider as in Pr\"uss and Simonett
\cite{PruessSimonett1} the problem in
$n+1$ dimensions, where $\Gamma_0$ is the graph of a sufficiently smooth function
$h_0:\R^n\to\R$, i.e.,
\begin{align*}
  \Gamma_0 &=\{(x,y)\in\R^n\times \R: y=h_0(x)\},\\
  \Omega_1(0)&=\{(x,y)\in\R^n\times \R: y<h_0(x)\},\\
  \Omega_2(0)&=\{(x,y)\in\R^n\times \R: y>h_0(x)\}.
\end{align*}
The interface has then the form
\[
 \Gamma(t)=\{(x,h(t,x)):\, x\in \R^n\},
\]
where $h: [0,t_0]\times \R^n\to\R$ with $h(0,\cdot)=h_0$ and $t_0>0$ is some
final time.
We note that the case of bounded fluid domains is considered in
\cite{Koehneetal}. The analysis of this paper should also extend to this
setting, but the presentation would be more technical.

If $h(t,\cdot)$ has second derivatives then normal and curvature of the interface $\Gamma(t)$ are
given by
\begin{equation}\label{nukappa}
\begin{split}
\hat\nu(t,x)=\nu(t,x,h(t,x))&=\frac{1}{\sqrt{1+|\nabla h(t,x)|^2}}\binom{-\nabla
h(t,x)}{1},\\
\hat\kappa(t,x)=\kappa(t,x,h(t,x))&=\Div\nolimits_x \left(\frac{\nabla h(t,x)}{\sqrt{1+|\nabla h(t,x)|^2}}\right)
=\Delta h-G_\kappa(h),
\end{split}
\end{equation}
where $\nabla h$ and $\Delta h$ denote the gradient and Laplacian of $h$ with
respect to $x$ and
\begin{align}\label{Gkadef}
G_\kappa(h)=\frac{|\nabla h|^2 \Delta h}{(1+\sqrt{1+|\nabla h|^2})\sqrt{1+|\nabla h|^2}}+
\frac{\nabla h^\top \nabla^2 h  \nabla h}{(1+|\nabla h|^2)^{3/2}}.
\end{align}
Following
\cite{PruessSimonett1}, we now transform the problem to $\dot\R^{n+1}=\{(x,y)\in \R^{n+1}: y\ne 0\}$
with a flat interface at $y=0$ by using the transformation
\begin{equation}\label{trafo}
  \hat u(t,x,y)=\binom{v(t,x,y)}{w(t,x,y)}:=u(t,x,h(t,x)+y),
 \pi(t,x,y):=q(t,x,h(t,x)+y).
\end{equation}
Analogously, let with $\R_\pm^{n+1}=\{(x,y)\in \R^n\times\R:\, \pm y>0\}$
\begin{align*}
 \hat\rho(t,x,y)&=\rho(t,x,h(t,x)+y)=\chi_{\R_-^{n+1}}(x,y)
\rho_1+\chi_{\R_+^{n+1}}(x,y)\rho_2,\\
 \hat\mu(t,x,y)&=\mu(t,x,h(t,x)+y)=\chi_{\R_-^{n+1}}(x,y) \mu_1+\chi_{\R_+^{n+1}}(x,y) \mu_2.
\end{align*}
As in \cite{PruessSimonett1}, we work with the following function spaces.
Let $\Omega\subset\R^m$ be open and $X$ be a Banach space. $L_p(\Omega;X)$,
$H_p^s(\Omega;X)$, $1\le p\le\infty$, $s\in\R$,
denote the X-valued Lebesgue and Bessel potential spaces of order s, respectively.
We note that $H_p^k(\Omega;X)=W_p^k(\Omega;X)$ for $k\in\N_0$, $1<p<\infty$
with the Sobolev-Slobodetski\v{i} spaces $W_p^k$. Moreover, we will use
the fractional Sobolev-Slobodetski\v{i} spaces
$W_p^s(\Omega;X)$, $1\le p<\infty$, $s\in (0,\infty)\setminus\N$, 
with norm
\[
 \|g\|_{W_p^s(\Omega;X)}=\|g\|_{W_p^{[s]}(\Omega;X)}
+\sum_{|\alpha|=[s]}\left(\int_\Omega\int_\Omega
\frac{\|\partial^\alpha g(x)-\partial^\alpha
g(y)\|_X^p}{|x-y|^{m+(s-[s])p}}\,dx\,dy\right)^{1/p}
\]
We recall that $W_p^s(\Omega;X)=B_{pp}^s(\Omega;X)$ for $s\in (0,\infty)\setminus\N$
with the Besov space $B_{pp}^s$. Finally, the homogeneous Sobolev space
$\dot H_p^1(\Omega)$ is defined by
\[
 \dot H_p^1(\Omega):=(\{g\in L_{1,loc}(\Omega):
\|\nabla g\|_{L_p(\Omega)}<\infty\},\|\cdot\|_{\dot H_p^1(\Omega)}),\quad
\|g\|_{\dot H_p^1(\Omega)}:=\|\nabla g\|_{L_p(\Omega;\R^m)}.
\]
Then $\dot H_p^1(\Omega)$ is for connected $\Omega$
a Banach space if we factor out the constant functions
and equip the resulting space with the corresponding quotient norm.

Finally, for $\Omega\subset\R^m$ open or closed we denote by $BUC(\Omega;X)$ and $BC(\Omega;X)$ the space of
bounded uniformly continuous and the space of bounded continuous functions
equipped with the supremum norm, respectively. Analogously, $BUC^k(\Omega;X)$ and
$BC^k(\Omega;X)$, $k\in\N_0$, are defined for $k$-times continuously
differentiable functions with bounded uniformly continuous or bounded
continuous derivatives up to order $k$. If $\Omega$ is compact,
we may briefly write $C^k(\Omega;X)$, since boundedness und uniform continuity
are automatically satisfied.

To state the transformed problem, we follow \cite{PruessSimonett1} and
we use a fixed point formulation consisting of a linearized Stokes
problem with nonlinear right hand side. In fact, denote by
\begin{equation}\label{stokesL}
    L (\hat u,\pi,r,h)=(f,f_d,g_v,g_w,g_h),~
    (\hat u(0),h(0))=(\hat u_0,h_0),~ (\hat u,\pi,r,h)\in \E(t_0)
\end{equation}
(i.e., $r=[\pi]$ by the definition of $\E(t_0)$) the Stokes problem with free boundary
\begin{align}\label{stokes}
\begin{aligned}
\hat\rho \partial_t \hat u-\hat\mu \Delta \hat u+\nabla \pi &=f && \mbox{in $\dot\R^{n+1}$},\\
\Div \hat u & = f_d && \mbox{in $\dot\R^{n+1}$},\\
- [\hat\mu \partial_y v]-[\hat\mu\nabla_x w] &=g_v&& \mbox{on $\R^{n}$},\\
- 2[\hat\mu \partial_y w]+[\pi]-\sigma \Delta h &=g_w && \mbox{on $\R^{n}$},\\
 [\hat u] & = 0 && \mbox{on $\R^{n}$},\\
 \partial_t h-\gamma w &=g_h && \mbox{on $\R^{n}$},\\
 \hat u(0)=\hat u_0,\quad h(0)& =h_0.
\end{aligned}
\end{align}
for $t>0$. Here, $[\hat u]$ denotes the jump across the transformed
interface $y=0$ and $\gamma w(x)=w(x,0)$ denotes the trace of a
function $w: \dot\R^{n+1}\to\R$ at $y=0$ satisfying $[w]=0$.

Then it is shown in
\cite{PruessSimonett1} that the transformation
\eqref{trafo} leads to the following problem for
$\hat u=(v,w), \pi, h$
\begin{align}\label{Ptrans}
\begin{split}
L (\hat u,\pi,[\pi],h)&=(\hat c+F(\hat u,\pi,h),F_d(\hat u,h),G_v(\hat u,[\pi],h),G_w(\hat
u,h),H(\hat u,h)),\\
(\hat u(0),h(0))&=(\hat u_0,h_0),
\end{split}
\end{align}
where the right hand sides are given by
\begin{align}\label{RHS}
\begin{split}
F_v(v,w,\pi,h)&=\hat\mu \left(-2(\nabla h\cdot\nabla_x)\partial_y v+
|\nabla h|^2 \partial_y^2 v-\Delta h \partial_y v\right)+
\partial_y \pi\nabla h\\
&\quad+\hat\rho \left(-(v\cdot\nabla_x)v+(\nabla h^\top v)\partial_y v-
w \partial_y v\right)+\hat\rho \partial_t h \partial_y v,\\
F_w(v,w,h)&=\hat\mu \left(-2(\nabla h\cdot\nabla_x)\partial_y w+
|\nabla h|^2 \partial_y^2 w-\Delta h \partial_y w\right)\\
&\quad+\hat\rho \left(-(v\cdot\nabla_x)w+(\nabla h^\top v)\partial_y w-
w \partial_y w\right)+\hat\rho \partial_t h \partial_y w,\\
F_d(v,h) &= \nabla h^\top \partial_y v,\\
G_v(v,w,[\pi],h)&=-[\hat\mu (\nabla_x v+(\nabla_x v)^\top)]\nabla h+
|\nabla h|^2 [\hat\mu \partial_y v]+(\nabla h^\top [\hat\mu\partial_y v])\nabla h\\
&\quad-[\hat\mu\partial_y w]\nabla h+
\left([\pi]-\sigma (\Delta h-G_\kappa(h))\right)\nabla h,\\
G_w(v,w,h)&=-\nabla h^\top [\hat\mu\partial_y v]-
\nabla h^\top [\hat\mu\nabla_x w]+|\nabla h|^2 [\hat\mu \partial_y w]-\sigma
G_\kappa(h),\\
H(v,w,h) &= -(\gamma v)^\top\nabla h.
\end{split}
\end{align}
Note that all terms except $G_\kappa(h)$ 
are polynomials in $(v,w,\pi,[\pi],h)$ and derivatives of $(v,w,\pi,h)$.
Moreover, all terms are linear with respect to second derivatives and
$G_\kappa(h)$ is the pointwise superposition of a smooth function with $\nabla h$ and
$\nabla^2 h$.

\begin{remark}
The transformed version of the deformation tensor $D(u)=\nabla u+\nabla
u^\top$ is given by ${\mathcal D}(\hat u,h)={\mathcal D}(v,w,h)$, where
\begin{align*}
{\mathcal D}(\hat u,h)=\nabla \hat u+\nabla \hat u^\top-
\binom{\nabla h \partial_y \hat u^\top}{0}-\binom{\nabla h \partial_y \hat u^\top}{0}^\top.
\end{align*}
Then the compatibility condition \eqref{comp} can with
$\hat\nu(0,x)=\frac{1}{\sqrt{1+|\nabla h_0(x)|^2}}\binom{-\nabla
h_0(x)}{1}$ equivalently be written as
\begin{equation}\label{comptrans}
\begin{split}
 &[\hat\mu {\mathcal D}(\hat u_0,h_0)\hat\nu(0)-\hat\mu(\hat\nu(0)^\top 
{\mathcal D}(\hat u_0,h_0)\hat\nu(0))\hat\nu(0)]=0,\\
& \Div \hat u_0=F_d(\hat u_0,h_0),\quad [\hat u_0]=0.
\end{split}
\end{equation}
\end{remark}

\section{Well-posedness and differentiability with respect to
controls}\label{sec:3}
\subsection{Results for the transformed problem}\label{sec:31}
By applying a fixed point theorem to \eqref{Ptrans}, the following result is shown in
\cite{PruessSimonett1} for $\hat c=0$.
\begin{theorem}\label{thm:ex}
Let $p>n+3$ and consider the case $c=0$, i.e. $\hat c=0$. Let
\begin{align}\label{Udef1}
 \U_{\hat u}:=W_p^{2-2/p}(\dot \R^{n+1},\R^{n+1}),\quad
\U_h:=W_p^{3-2/p}(\R^n).
\end{align}
Then for any $t_0>0$  there exists $\hat\varepsilon_0=\hat\varepsilon_0(t_0)>0$
such that for all initial values
\[
  (\hat u_0,h_0)\in \U_{\hat u}\times \U_h
\]
satisfying, with $u_0(x,h_0(x)+y)=\hat u_0(x,y)$, the compatibility conditions
\begin{equation}\label{comp}
 [\mu D(u_0)\nu(0)-\mu(\nu(0)^\top D(u_0) \nu(0))\nu(0)]=0,\quad
 \Div u_0=0,\quad
  [u_0]=0,
\end{equation}
as well as the smallness condition
\[
\|\hat u_0\|_{\U_{\hat u}}+\|h_0\|_{\U_h}\le
\hat\varepsilon_0
\]
there exists a unique solution of the transformed problem \eqref{Ptrans} with
\[
 (\hat u,\pi,[\pi],h)\in \E(t_0),
\]
where 
with $J=(0,t_0)$
\begin{align}\notag
 \E_1(t_0)&=\{\hat u\in H_p^1(J;L_p(\R^{n+1},\R^{n+1}))\cap
L_p(J;H_p^2(\dot\R^{n+1},\R^{n+1})): [\hat u]=0\},\\
\notag
 \E_2(t_0)&=L_p(J;\dot H_p^1(\dot\R^{n+1})),\\
\label{Edef}
 \E_3(t_0)&=W_p^{1/2-1/(2p)}(J;L_p(\R^{n}))\cap
 L_p(J;W_p^{1-1/p}(\R^{n})),\\
\notag
 \E_4(t_0)&=W_p^{2-1/(2p)}(J;L_p(\R^{n}))\cap
 H_p^1(J;W_p^{2-1/p}(\R^{n}))\\
\notag
 &\quad \cap W_p^{1/2-1/(2p)}(J;H_p^2(\R^{n}))\cap
 L_p(J;W_p^{3-1/p}(\R^{n})),\\
\notag
 \E(t_0)&=\{(\hat u,\pi,r,h)\in \E_1(t_0)\times \E_2(t_0)\times \E_3(t_0)\times
 \E_4(t_0):[\pi]=r\}.
\end{align}
Moreover, $(\hat u,\pi,[\pi],h)\in \E(t_0)$ depends continuously on $(\hat
u_0,h_0)\in \U_{\hat u}\times \U_h$ satisfying \eqref{comp}.
\end{theorem}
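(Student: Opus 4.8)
The plan is to prove the theorem by combining maximal $L_p$-regularity of the linearized free-boundary Stokes system with a contraction argument, following the scheme of \cite{PruessSimonett1}. The analytic backbone I would invoke is that the linear operator $L$ of \eqref{stokesL}--\eqref{stokes} is a topological isomorphism
$$L:\ \E(t_0)\ \longrightarrow\ \F(t_0)\times\mathbb{I}(t_0),$$
where $\F(t_0)$ is the natural data space for the right-hand sides $(f,f_d,g_v,g_w,g_h)$ of \eqref{stokes} --- $f\in L_p(J;L_p(\R^{n+1};\R^{n+1}))$, and $f_d,g_v,g_w,g_h$ in the time-space trace spaces dictated by the orders of the operators acting on $\hat u$, $\pi$ and $h$ --- and $\mathbb{I}(t_0)$ is the set of initial values $(\hat u_0,h_0)\in\U_{\hat u}\times\U_h$ satisfying the compatibility conditions at $t=0$, which are precisely \eqref{comptrans} (equivalently \eqref{comp}). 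This is the deep part of the whole argument; it rests on operator-valued Fourier multiplier theory and on verifying the Lopatinskii--Shapiro condition for the two-phase Stokes symbol with surface tension, but it is available from \cite{PruessSimonett1}. I would also record that $L$ restricts to an isomorphism ${}_0\E(t_0)\to{}_0\F(t_0)$ between the subspaces of elements with vanishing time trace at $t=0$, with bounded inverse $L_0^{-1}$.

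Next I would set up the fixed-point formulation. Given $(\hat u_0,h_0)$ as in the statement, let $z_\ast=(\hat u_\ast,\pi_\ast,[\pi_\ast],h_\ast)\in\E(t_0)$ be the unique solution of $Lz_\ast=0$ with $(\hat u_\ast(0),h_\ast(0))=(\hat u_0,h_0)$, so that $\|z_\ast\|_{\E(t_0)}\le C_0(\|\hat u_0\|_{\U_{\hat u}}+\|h_0\|_{\U_h})\le C_0\hat\varepsilon_0$ with $C_0=C_0(t_0)$ by maximal regularity. Writing $z=z_\ast+\bar z$ turns \eqref{Ptrans} (with $\hat c=0$) into the fixed-point equation $\bar z=\Phi(\bar z):=L_0^{-1}N(z_\ast+\bar z)$ for $\bar z\in{}_0\E(t_0)$, where $N(z)=(F(\hat u,\pi,h),F_d(\hat u,h),G_v(\hat u,[\pi],h),G_w(\hat u,h),H(\hat u,h))$ collects the nonlinearities \eqref{RHS}; one checks, using \eqref{comptrans}, that $N(z_\ast+\bar z)$ lands in ${}_0\F(t_0)$, so that $\Phi$ is well defined. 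The core of the work is then the pair of nonlinear estimates
$$\|N(z)\|_{\F(t_0)}\le C(R)\|z\|_{\E(t_0)}^2,\qquad \|N(z_1)-N(z_2)\|_{\F(t_0)}\le C(R)(\|z_1\|_{\E(t_0)}+\|z_2\|_{\E(t_0)})\|z_1-z_2\|_{\E(t_0)}$$
for $\|z\|_{\E(t_0)},\|z_i\|_{\E(t_0)}\le R$. The quadratic behaviour is structural: by the remark after \eqref{RHS}, every term of \eqref{RHS} is either at least bilinear in $(\hat u,\pi,h)$ and their first derivatives, or equals $G_\kappa(h)$ (resp.\ $G_\kappa(h)\nabla h$), and $G_\kappa$ vanishes to second order at $0$. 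The boundedness and Lipschitz bounds are where the hypothesis $p>n+3$ is essential: it yields the embeddings $\E_1(t_0)\hookrightarrow BUC(\bar J;BUC^1(\dot\R^{n+1};\R^{n+1}))$ and $\E_4(t_0)\hookrightarrow BUC^1(\bar J;BUC^1(\R^n))\cap BUC(\bar J;BUC^2(\R^n))$, together with multiplicative (Banach-algebra type) properties of all the trace spaces appearing on the right of \eqref{stokes}, so that products land in the correct components of $\F(t_0)$ and the superposition $h\mapsto G_\kappa(h)$ (smooth where $\|\nabla h\|_\infty$ is small, which holds here since $h$ is small) is well controlled.

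With these estimates in hand I would close the argument: combining them with $\|z_\ast\|_{\E(t_0)}\le C_0\hat\varepsilon_0$ and the bound on $\|L_0^{-1}\|$, one chooses a radius $\rho=\rho(t_0)$ and then $\hat\varepsilon_0=\hat\varepsilon_0(t_0)$ small enough that $\Phi$ maps the closed ball $\overline{B}_\rho(0)\subset{}_0\E(t_0)$ into itself and is a strict contraction there; Banach's fixed-point theorem gives the unique $\bar z$, hence the unique $z=z_\ast+\bar z\in\E(t_0)$ solving \eqref{Ptrans}, with uniqueness in the ball because any solution decomposes as $z_\ast$ plus a fixed point of $\Phi$. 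For continuous dependence I would use that $z_\ast$ depends linearly and boundedly on $(\hat u_0,h_0)$, hence $\Phi$ is Lipschitz in this parameter, and apply the standard stability estimate for parameter-dependent contractions to obtain $\|z-\tilde z\|_{\E(t_0)}\le C(\|\hat u_0-\tilde{\hat u}_0\|_{\U_{\hat u}}+\|h_0-\tilde h_0\|_{\U_h})$ for two admissible data sets, which in particular yields the asserted continuous dependence. I expect the main obstacle --- apart from the maximal-regularity theorem itself, which I take as given --- to be the term-by-term verification of the nonlinear estimates above, i.e.\ showing that each entry of \eqref{RHS} maps into the correct component of $\F(t_0)$ with quadratic control; this is delicate because the anisotropic time-space trace spaces $\E_3(t_0),\E_4(t_0)$ and the corresponding data spaces are intricate and the coefficients $\hat\rho,\hat\mu$ are only piecewise constant across $y=0$.
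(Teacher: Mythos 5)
Your overall architecture (maximal $L_p$-regularity of \eqref{stokes}, a reference solution absorbing the initial data, a contraction in ${}_0\E(t_0)$ driven by the quadratic behaviour of \eqref{RHS}, continuous dependence via parameter-dependent contraction) is the same as the route the paper takes (it cites \cite[Thm.~6.3]{PruessSimonett1}, and reproduces the argument in extended form in the proof of Theorem \ref{thm:exdiff}). However, there is a genuine gap in your construction of the reference solution $z_\ast$. You define $z_\ast$ as the solution of the \emph{homogeneous} linear problem $Lz_\ast=0$ with initial data $(\hat u_0,h_0)$, and you assert that the initial values compatible with the linear operator are ``precisely \eqref{comptrans}''. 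That conflates two different sets of compatibility conditions. The linear theory (Theorem \ref{thm:Lplin}) requires \eqref{complin1}--\eqref{complin2}, i.e.\ $\Div \hat u_0=f_d(0)$ and $-[\hat\mu\partial_y v_0]-[\hat\mu\nabla_x w_0]=g_v(0)$; with your choice $f_d\equiv 0$, $g\equiv 0$ these become $\Div\hat u_0=0$ and a vanishing tangential stress jump, whereas the admissible data of the theorem satisfy the \emph{transformed, nonlinear} conditions \eqref{comptrans}: $\Div\hat u_0=F_d(\hat u_0,h_0)=\nabla h_0^\top\partial_y v_0$ and the jump condition with right-hand side $G_v(\hat u_0,[\pi_0],h_0)$, both generically nonzero. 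So $Lz_\ast=0$ with these initial data is in general not solvable in $\E(t_0)$; and even if some $z_\ast$ with $Lz_\ast=0$ existed, the right-hand side $N(z_\ast+\bar z)$ of your fixed-point map would not lie in ${}_0\F(t_0)$ (its $f_d$-, $g$- and $g_h$-components have nonzero traces $F_d(\hat u_0,h_0)$, $G(\hat u_0,[\pi_0],h_0)$, $H(\hat u_0,h_0)$ at $t=0$), so $L_0^{-1}$ cannot be applied and $\Phi$ is not well defined.

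The missing idea is the explicit construction of a \emph{compatibility-resolving} right-hand side: one must choose $R^*=(0,f_d^*,g^*,g_h^*)\in\F(t_0)$ with $f_d^*(0)=F_d(\hat u_0,h_0)$, $g^*(0)=G(\hat u_0,r_0(\hat u_0,h_0),h_0)$, $g_h^*(0)=H(\hat u_0,h_0)$ (with $r_0$ the induced initial pressure jump), e.g.\ via heat semigroups $e^{-tD_n}$, $e^{-tD_{n+1}}$ applied to these traces as in the proof of Theorem \ref{thm:exdiff}, and let $z_\ast$ solve $Lz_\ast=R^*$ with the given initial data; then $K(\bar z)=N(\bar z+z_\ast)-Lz_\ast$ does map into ${}_0\F(t_0)$ and your contraction and stability arguments go through essentially as you describe (the paper packages them in Theorem \ref{thm:fix}, using that $N$ is real analytic with $N(0)=0$, $DN(0)=0$, which is a sharper form of your quadratic estimates). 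With that repair, the rest of your outline matches the paper's proof.
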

\begin{proof}
See \cite[Thm. 6.3]{PruessSimonett1}.
\end{proof}

Our first aim is to study the differentiability properties of the control-to-state
map $(\hat u_0,\hat c)\mapsto (\hat u,\pi,[\pi],h)$. Note that we consider
also the case $\hat c\ne 0$. The proof is carried out by an appropriate
extension of the fixed point argument for \eqref{Ptrans} based on
Theorem \ref{thm:fix}.

To apply the fixed point argument,
the following $L_p$-maximum regularity result of
\cite{PruessSimonett1} for the linearized problem \eqref{stokes} will
be essential.
\begin{theorem}\label{thm:Lplin}
Let $1<p<\infty$ be fixed, $p\ne 3/2,3$ and assume that $\rho_i,\mu_i$ are
positive constants. For arbitrary $t_0>0$ let $J=(0,t_0)$ and let
$\E_1(t_0),\ldots,\E_4(t_0), \U_{\hat u},\U_h$
be defined by \eqref{Edef}, \eqref{Udef}. Set
\begin{align}\label{Fdef}
\begin{aligned}
 \F_1(t_0)&=L_p(J;L_p(\R^{n+1},\R^{n+1})),\\
 \F_2(t_0)&=H_p^1(J;\dot H_p^{-1}(\R^{n+1}))\cap L_p(J;H_p^1(\dot \R^{n+1})),\\
 \F_3(t_0)&=W_p^{1/2-1/(2p)}(J;L_p(\R^{n},\R^{n+1}))\cap
 L_p(J;W_p^{1-1/p}(\R^{n},\R^{n+1})),\\
 \F_4(t_0)&=W_p^{1-1/(2p)}(J;L_p(\R^{n}))\cap
 L_p(J;W_p^{2-1/p}(\R^{n})),\\
 \F(t_0)&=\F_1(t_0)\times \F_2(t_0)\times \F_3(t_0)\times \F_4(t_0).
\end{aligned}
\end{align}
Then for all initial values
$(\hat u_0,h_0)\in \U_{\hat u}\times \U_h$ and $(f,f_d,g,g_h)\in
\F(t_0)$
satisfying the compatibility conditions
\begin{align}\label{complin1}
 \Div \hat u_0=f_d(0) ~~ \mbox{on $\dot\R^{n+1}$},\quad
  [\hat u_0]=0 ~~ &\mbox{on $\R^n$ ~~if $p>3/2$},\\\label{complin2}
\mbox{and in addition}\quad [-\hat\mu \partial_y v_0]-[\hat\mu \nabla_x w_0]=g_v(0) ~~ & \mbox{on $\R^n$ ~~if $p>3$,}
\end{align}
there exists a unique solution $(\hat u,\pi,h)\in \E(t_0)$ of \eqref{stokes} and the solution map
\[
(f,f_d,g,g_h,\hat u_0,h_0)\in \F(t_0)\times \U_{\hat u}\times \U_h\mapsto (\hat
u,\pi,[\pi],h)\in \E(t_0)
\]
is continuous.
\end{theorem}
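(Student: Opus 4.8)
The plan is to establish Theorem~\ref{thm:Lplin} by the Laplace--Fourier transform / operator-valued multiplier method for parabolic boundary value problems with inhomogeneous boundary data, adapted to the two-phase, free-boundary structure. First I would reduce to the case of trivial initial and boundary data: using bounded right inverses of the trace operators built into the definitions of $\E_1(t_0),\dots,\E_4(t_0)$ and $\U_{\hat u},\U_h$, construct auxiliary $(\bar u,\bar\pi,\bar h)\in\E(t_0)$ with $(\bar u(0),\bar h(0))=(\hat u_0,h_0)$ whose defect in \eqref{stokes} satisfies the compatibility conditions \eqref{complin1}, \eqref{complin2} in homogeneous form and vanishes at $t=0$ in the appropriate trace sense. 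This is exactly where the restrictions $p\ne 3/2,3$ and the form of \eqref{complin1}--\eqref{complin2} enter: $f_d\in\F_2(t_0)$ has a time trace $f_d(0)\in H_p^1$ precisely when $p>3/2$, and $g_v(0)$ lives in a trace space when $p>3$. Subtracting, it suffices to solve \eqref{stokes} with $(\hat u_0,h_0)=0$ and data in closed subspaces of $\F(t_0)$ with vanishing traces; such data are extended by zero to $t\le 0$, so the problem may be posed on $J=\R_+$ (or $\R$), and the estimate for finite $t_0$ follows by restriction with constants uniform on bounded $t_0$-intervals.

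The core step is the symbol analysis. After Laplace transform in $t$ (variable $\lambda\in\mathbb{C}_+$) and Fourier transform in $x\in\R^n$ (variable $\xi$), the bulk equations become constant-coefficient ODE systems in $y$ on each half-line $\pm y>0$ for $(\hat v,\hat w,\hat\pi)$; imposing decay as $y\to\pm\infty$ expresses the bulk solution through finitely many boundary amplitudes, and the transmission and boundary relations $[\hat u]=0$, the tangential stress balance, the normal stress balance (which carries the $\sigma\Delta h$-term, i.e.\ $\sigma|\xi|^2\hat h$), and the kinematic relation $\lambda\hat h-\gamma\hat w=\hat g_h$ close the system. Eliminating the bulk unknowns yields a ``boundary symbol'' $M(\lambda,\xi)$ acting on $(\gamma\hat v,\gamma\hat w,\hat h)$, and the whole problem reduces to inverting $M(\lambda,\xi)$ together with the Stokes half-space resolvent operators. \emph{The main obstacle is to show that $M(\lambda,\xi)$ is invertible for all $(\lambda,\xi)\in\overline{\mathbb{C}_+}\times\R^n\setminus\{(0,0)\}$, with a quantitative lower bound on $|\det M|$ that is uniform under the natural parabolic rescaling, and that the resulting solution operators form an $\mathcal{R}$-bounded family} --- this is the Lopatinskii--Shapiro condition for two-phase Stokes with surface tension. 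Here positivity of $\rho_i,\mu_i$ and $\sigma>0$ is essential: surface tension raises the effective order of the $h$-equation, which after elimination behaves like a parabolic problem of order $3$ in space (order $3/2$ in time) for $h$, and this is precisely what produces the regularity class $\E_4(t_0)$; with $\sigma=0$ the scaling of the $h$-block would be different. I expect this invertibility analysis, carried out on the explicit small blocks coming from the half-space Stokes resolvent and the Dirichlet-to-Neumann-type operators they induce on the interface, to be the hardest and most computational part.

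Granting the symbol bounds, the $L_p$-estimates follow from operator-valued Fourier multiplier theory: one applies the Mikhlin--Kalton--Weis theorem in $x$ and the corresponding Laplace multiplier theorem in $t$, using that $L_p(\R^n)$ and $L_p(J;X)$ are UMD spaces, to obtain $\partial_t\hat u,\nabla^2\hat u,\nabla\hat\pi\in\F_1(t_0)$ and the bounds defining $\E_3(t_0),\E_4(t_0)$ for $h$. The anisotropic space--time regularity encoded in $\E_4(t_0)$ (simultaneous membership in $W_p^{1/2-1/(2p)}(J;H_p^2)$ and $L_p(J;W_p^{3-1/p})$, etc.) I would get from the mixed-derivative theorem for sums of commuting sectorial operators, equivalently from the bounded $\mathcal{H}^\infty$-calculus of the operators underlying the reduced $h$-equation. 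The divergence datum $f_d$ and the pressure are treated in the usual way: split off a corrector solving $\Div u=f_d$ directly in Fourier variables with the gain of one spatial derivative and the time regularity dictated by $f_d\in\F_2(t_0)$, and recover the pressure from the momentum equation, which determines only $\nabla\pi$, consistently with $\pi\in\E_2(t_0)=L_p(J;\dot H_p^1)$.

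Finally, uniqueness follows from the a priori estimate applied to the difference of two solutions with zero data, and continuity of the solution map $\F(t_0)\times\U_{\hat u}\times\U_h\to\E(t_0)$ is immediate from linearity together with the norm estimate, provided the reduction in the first step is chosen to depend linearly and boundedly on $(f,f_d,g,g_h,\hat u_0,h_0)$ --- which it does, being assembled from bounded right inverses of trace operators. Combining these four steps yields the theorem.
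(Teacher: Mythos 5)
The paper does not reprove this theorem at all: its ``proof'' is a citation to \cite[Thm.~5.1]{PruessSimonett1} together with \cite[Lem.~6.1, (e)]{PruessSimonett1}, since the $L_p$-maximal regularity of the linearized two-phase Stokes problem with surface tension is exactly Pr\"uss and Simonett's result. Your outline reproduces the strategy of that cited proof (reduction to vanishing initial/boundary data, Laplace--Fourier transform, elimination to a boundary symbol, operator-valued multiplier theorems, mixed-derivative/$\mathcal{H}^\infty$-calculus arguments for the $h$-regularity), so the route is the right one.

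As a standalone proof, however, it has a genuine gap precisely where you flag it: the invertibility of the boundary symbol $M(\lambda,\xi)$ on $\overline{\mathbb{C}_+}\times\R^n\setminus\{(0,0)\}$ with bounds uniform under parabolic rescaling, and the $\mathcal{R}$-boundedness of the resulting solution-operator families, are only announced (``granting the symbol bounds'') and never established. This Lopatinskii--Shapiro analysis for the two-phase problem with the surface-tension term $\sigma|\xi|^2\hat h$ coupled to the kinematic equation is the entire mathematical content of the theorem; without it nothing is proved, and in particular the four-fold intersection space $\E_4(t_0)$ for $h$ cannot be obtained, since it requires the explicit structure of the reduced $h$-symbol together with mixed-derivative theorems. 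A smaller inaccuracy: the exceptional values $p\ne 3/2,3$ do not come from a time trace of $f_d$ (whose trace at $t=0$ exists in a suitable interpolation space for all $p>1$); $p>3/2$ is needed for the spatial trace $[\hat u_0]$ of $\hat u_0\in W_p^{2-2/p}(\dot\R^{n+1})$ on $y=0$ to be defined ($2-2/p>1/p$), and $p>3$ both for the time trace $g(0)$ of $g\in\F_3(t_0)$ ($1/2-1/(2p)>1/p$) and for the jump $[\hat\mu\partial_y v_0]$ to make sense. So either carry out the symbol/$\mathcal{R}$-boundedness analysis in full, or do what the paper does and invoke \cite[Thm.~5.1]{PruessSimonett1} directly.
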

\begin{proof}
This follows from \cite[Thm. 5.1]{PruessSimonett1} and \cite[Lem. 6.1, (e)]{PruessSimonett1}.
\end{proof}
For homogeneous initial data we obtain immediately.
\begin{corollary}\label{cor:stokes}
Let $p>3$ and define in addition to $\E(t_0)$ and $\F(t_0)$ the spaces
\begin{align*}
 {}_0\E(t_0)&:=\{(\hat u,\pi,r,h)\in \E(t_0):~ \hat u(0)=0,~~ r(0)=0,~~ h(0)=0\},\\
 {}_0\F(t_0)&:=\{(f,f_d,g,g_h)\in \F(t_0):~ f_d(0)=0,~~ g(0)=0,~~ g_h(0)=0\}
\end{align*}
with initial value $0$ for all components that admit a trace at $t=0$.
Then \eqref{stokes} has a unique and continuous solution map
\[
(f,f_d,g,g_h,0,0)\in {}_0\F(t_0)\times \U_{\hat u}\times \U_h\mapsto (\hat
u,\pi,[\pi],h)\in {}_0\E(t_0)
\]
\end{corollary}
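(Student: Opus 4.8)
The plan is to obtain this as a direct consequence of Theorem~\ref{thm:Lplin}, using that the system \eqref{stokes} is linear in $(\hat u,\pi,h)$ and on the data, and that the extra hypothesis $p>3$ is precisely what makes the temporal traces at $t=0$ of the interface quantities well defined.

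First I would observe that for $(f,f_d,g,g_h)\in{}_0\F(t_0)$ and $(\hat u_0,h_0)=(0,0)$ the compatibility conditions \eqref{complin1}--\eqref{complin2} hold trivially: $\Div\hat u_0=0=f_d(0)$, $[\hat u_0]=0$, and (since $p>3$ forces us to check \eqref{complin2}) $[-\hat\mu\partial_y v_0]-[\hat\mu\nabla_x w_0]=0=g_v(0)$. Hence Theorem~\ref{thm:Lplin} already provides a unique solution $(\hat u,\pi,[\pi],h)\in\E(t_0)$, depending continuously on $(f,f_d,g,g_h)$.

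Next I would check that this solution in fact lies in ${}_0\E(t_0)$. The requirements $\hat u(0)=0$ and $h(0)=0$ are immediate from the initial conditions in \eqref{stokes}. The only substantial point is $r(0)=[\pi](0)=0$: for $p>3$ one has $1/2-1/(2p)>1/p$, so elements of $\E_3(t_0)$ (and of $\F_3(t_0)$) do admit a trace at $t=0$. Taking the temporal trace at $t=0$ in the fourth equation of \eqref{stokes}, $-2[\hat\mu\partial_y w]+[\pi]-\sigma\Delta h=g_w$, and using $\Delta h(0)=0$ (from $h(0)=0$), $g_w(0)=0$ (from $(f,f_d,g,g_h)\in{}_0\F(t_0)$), and $[\hat\mu\partial_y w](0)=0$ (from $\hat u(0)=0$, via the embedding $\E_1(t_0)\hookrightarrow BUC(\bar J;W_p^{2-2/p}(\dot\R^{n+1}))$ and the spatial trace theorem, where again $p>3$ ensures that $\partial_y$ of the $t=0$ value admits a trace on $\R^n$), we get $[\pi](0)=0$. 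Thus $(\hat u,\pi,[\pi],h)\in{}_0\E(t_0)$.

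Finally, uniqueness in ${}_0\E(t_0)$ is inherited from uniqueness in $\E(t_0)\supset{}_0\E(t_0)$, and continuity of ${}_0\F(t_0)\ni(f,f_d,g,g_h)\mapsto(\hat u,\pi,[\pi],h)\in{}_0\E(t_0)$ follows by restricting the continuous solution operator of Theorem~\ref{thm:Lplin} to the closed subspace ${}_0\F(t_0)\times\{(0,0)\}$ and noting that its range lies in the closed subspace ${}_0\E(t_0)$ carrying the subspace topology. The step demanding genuine care — and the reason $p>3$ is imposed here even though Theorem~\ref{thm:Lplin} holds for all $p\ne 3/2,3$ — is exactly this verification that the relevant interface traces at $t=0$ exist and vanish.
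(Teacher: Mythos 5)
Your proposal is correct and follows the same route as the paper, which derives the corollary directly ("immediately") from Theorem \ref{thm:Lplin}: with zero initial data and data in ${}_0\F(t_0)$ the compatibility conditions \eqref{complin1}--\eqref{complin2} are trivially satisfied, and the solution's vanishing traces at $t=0$ (in particular $[\pi](0)=0$, which exists since $p>3$) place it in ${}_0\E(t_0)$. Your added verification of the trace condition via the fourth equation of \eqref{stokes} just spells out what the paper treats as immediate.
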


The fixed point argument relies on the following properties of the right
hand sides \eqref{RHS} of \eqref{Ptrans}.
\begin{lemma}\label{lem:N}
Let $p>n+3$ and set for $(\hat u,\pi,r,h)\in\E(t_0)$
\begin{equation}\label{Ndef}
 N(\hat u,\pi,r,h):=(F(\hat u,\pi,h),F_d(\hat u,h),G(\hat u,r,h),H(\hat u,h)),
\end{equation}
with $F=(F_v,F_w)$, $G=(G_v,G_w)$, $F_d$ and $H$ defined in \eqref{RHS}.
Then the mapping $N: \E(t_0)\to \F(t_0)$ is a well defined and real
analytic, more precisely,
\[
 N\in C^\omega(\E(t_0),\F(t_0)),\quad N(0)=0,\quad DN(0)=0.
\]
Moreover,
\[
 DN(\hat u,\pi,r,h)\in {\mathcal L}({}_{0} \E(t_0),{}_{0}
\F(t_0))\quad\forall\, (\hat u,\pi,r,h)\in \E(t_0).
\]
\end{lemma}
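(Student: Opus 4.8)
The plan is to exploit the structure recorded just after \eqref{RHS}: each component of $N$ from \eqref{Ndef} is a finite sum of terms, every term being a product of finitely many factors, each factor equal to one of $v,w,\pi,r,h$ or one of its first or second spatial derivatives or its first time derivative, the product being \emph{linear} in the second-order derivatives; the only exception is the extra scalar factor $G_\kappa(h)$ from \eqref{Gkadef} appearing in $G_v$ and $G_w$. I would use three standard facts: a bounded $m$-linear map between Banach spaces restricts on the diagonal to an entire homogeneous polynomial, hence is real-analytic; finite sums and compositions of real-analytic maps are real-analytic; and the Nemytskii operator induced by a scalar function real-analytic on an open set is real-analytic between Banach algebras of functions continuously embedded in $BUC$. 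Thus it suffices to show (a) every ``monomial'' multiplication occurring in $F,F_d,G,H$ is bounded multilinear from the product of the natural spaces of its factors --- each factor lying in the space dictated by \eqref{Edef} for the corresponding derivative of a component of $\E(t_0)$ --- into the target space $\F_j(t_0)$ from \eqref{Fdef} of the component of $N$ to which it contributes; and (b) $h\mapsto G_\kappa(h)$ is real-analytic from $\E_4(t_0)$ into the pointwise multiplier space that carries the curvature terms into $\F_3(t_0)$.

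The bulk of the work, and the main obstacle, is (a). Here $p>n+3$ enters through anisotropic Sobolev embeddings: since the temporal trace space of $\E_1(t_0)$ is $W_p^{2-2/p}(\dot\R^{n+1})$ and $2-2/p-(n+1)/p>1$ precisely when $p>n+3$, one gets $\E_1(t_0)\hookrightarrow BUC(\bar J;BUC^1(\dot\R^{n+1}))$, and similarly $\E_4(t_0)$ embeds so that $\nabla h,\nabla^2 h,\partial_t h$ are bounded and continuous on $\bar J\times\R^n$; consequently $\nabla h$, $\hat\rho$, $\hat\mu$ and smooth functions of $\nabla h$ act as bounded pointwise multipliers on all the anisotropic Besov/Sobolev spaces of \eqref{Edef}--\eqref{Fdef}. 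With these embeddings in hand, the products contributing to $\F_1(t_0)=L_p(J;L_p(\R^{n+1}))$ are immediate, and the delicate estimates concern the boundary terms $G,H$ into $\F_3(t_0),\F_4(t_0)$ --- which carry anisotropic fractional regularity --- and the divergence term $F_d=\nabla h^\top\partial_y v$ into $\F_2(t_0)=H_p^1(J;\dot H_p^{-1}(\R^{n+1}))\cap L_p(J;H_p^1(\dot\R^{n+1}))$, where multiplication into a negative-order spatial Sobolev space must be paired with a time derivative; all of these are controlled using the time--space regularity of $h$ and $\nabla h$ afforded by $\E_4(t_0)$ as a multiplier. These are exactly the mapping estimates underlying the fixed-point scheme of \cite{PruessSimonett1}; the only addition needed here is the (routine) observation that each is genuinely multilinear with constant uniform on bounded sets.

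For (b) I would write $G_\kappa(h)=g(\nabla h,\nabla^2 h)$ with $g(\xi,\eta)$ the rational function of \eqref{Gkadef}, whose denominator is bounded away from $0$, so that $g$ is real-analytic on $\{|\xi|<\delta\}\times\R^{n\times n}$ for each $\delta>0$ and, by inspection, $g(\xi,\eta)=O(|\xi|^2|\eta|)$ near $(0,0)$. Since $h\mapsto(\nabla h,\nabla^2 h)$ is bounded linear from $\E_4(t_0)$ into a product of function spaces that (for $p>n+3$) are Banach algebras continuously embedded in $BUC(\bar J\times\R^n)$, the induced superposition operator is real-analytic into that algebra, and multiplication by $\nabla h$ then places the curvature terms, real-analytically in $h$, into the multiplier space needed in (a). Together with the previous paragraph this yields $N\in C^\omega(\E(t_0),\F(t_0))$.

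Finally, $N(0)=0$ and $DN(0)=0$ follow at once from the structure: each term of $F,F_d,G,H$ is a product of at least two factors vanishing at the origin, or contains the factor $G_\kappa(h)=O(|\nabla h|^2|\nabla^2 h|)$, so the Taylor expansion of $N$ at $0$ has neither constant nor linear part. For the last claim, fix $z\in\E(t_0)$ and $\zeta=(\delta\hat u,\delta\pi,\delta r,\delta h)\in{}_0\E(t_0)$; by the product and chain rules $DN(z)\zeta$ is a finite sum of terms, each \emph{linear} in $\zeta$, so each term contains exactly one factor equal to a component of $\zeta$ or to one of its first/second spatial or first time derivatives. As $\delta\hat u(0)=\delta r(0)=\delta h(0)=0$ and, on the maximal-regularity spaces \eqref{Edef}, the trace at $t=0$ commutes with spatial differentiation and with $\partial_t$, every such factor has vanishing trace at $t=0$; multiplying by the remaining bounded factors, each term of $DN(z)\zeta$ has vanishing trace at $t=0$ in whichever components of $\F(t_0)$ admit one, i.e. $DN(z)\zeta\in{}_0\F(t_0)$, which gives $DN(z)\in\mathcal L({}_0\E(t_0),{}_0\F(t_0))$.
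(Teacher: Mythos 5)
The paper does not prove this lemma itself: its ``proof'' is a citation of \cite[Prop.\ 6.2]{PruessSimonett1}. Your proposal reconstructs, in outline, the argument behind that citation, and the strategy you describe is the right one -- it is the same pattern the paper itself carries out in detail for the initial-data analogue, Lemma \ref{lem:N0}: use $p>n+3$ to get the $BUC$-type embeddings of $\E_1(t_0)$ and $\E_4(t_0)$ (your exponent count $2-2/p-(n+1)/p>1$ iff $p>n+3$ is correct), treat every term of \eqref{RHS} as a bounded multilinear map, hence an entire polynomial, and handle $G_\kappa$ in \eqref{Gkadef} as an analytic superposition operator on a Banach algebra, exactly as the paper does via \cite[Thm.\ 1.1]{Brezis}. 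What your write-up buys beyond the paper is an explicit reason for $N(0)=0$, $DN(0)=0$ (every monomial is at least quadratic, $G_\kappa(h)=O(|\nabla h|^2|\nabla^2h|)$) and for the ${}_0\E\to{}_0\F$ mapping property of $DN$, neither of which the paper spells out. What it does \emph{not} buy is an independent proof of the analytically hard part: the boundedness of the relevant multiplications into the anisotropic spaces \eqref{Fdef} -- in particular $F_d=\nabla h^\top\partial_y v$ into $\F_2(t_0)=H_p^1(J;\dot H_p^{-1})\cap L_p(J;H_p^1(\dot\R^{n+1}))$ and the interface terms $G,H$ into the fractional spaces $\F_3(t_0),\F_4(t_0)$ -- which you explicitly defer to ``the mapping estimates underlying the fixed-point scheme of \cite{PruessSimonett1}''. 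That is precisely the content of the cited Prop.\ 6.2, so at the level of detail your proposal is on the same footing as the paper's proof-by-citation rather than a replacement for it.

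One small imprecision in the last step: it is not true that \emph{every} factor carrying a component of $\zeta$ has vanishing trace at $t=0$; ${}_0\E(t_0)$ imposes nothing on $\delta\pi$ (which has no time trace) nor on $\partial_t\delta h$, and both occur in $DF(z)\zeta$ through the terms $\partial_y\pi\,\nabla h$ and $\hat\rho\,\partial_t h\,\partial_y\hat u$. The argument survives because these factors appear only in the $F$-component, which maps into $\F_1(t_0)=L_p(J;L_p)$ and carries no trace condition in ${}_0\F(t_0)$, while the components that do ($F_d$, $G$, $H$) involve only $\hat u$, $r$, $h$ and their spatial derivatives, whose traces vanish for $\zeta\in{}_0\E(t_0)$. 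Your hedged conclusion (``in whichever components of $\F(t_0)$ admit one'') is correct, but the justification should be stated component-wise as above.
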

\begin{proof}
See \cite[Prop. 6.2]{PruessSimonett1}
\end{proof}
Moreover, we will need the following analogue for the spaces of the initial
values.
\begin{lemma}\label{lem:N0}
Let $p>n+3$, $\U_{\hat u}, \U_h$ be as in \eqref{Udef1} and set
\[
 \U_{\hat u,c}:=\{\hat u_0=(v_0,w_0) \in \U_{\hat u}: [\hat u_0]=0\}.
\]
Then with $G=(G_v,G_w)$ and $H$ defined in \eqref{RHS}
the mappings
\begin{align}\label{v0gh0}
(\hat u_0,h_0)\in \U_{\hat u}\times\U_h &\mapsto v_0^\top\nabla h_0\in
W^{2-2/p}_p(\dot\R^{n+1}),\\\label{H0}
(\hat u_0,h_0) \in \U_{\hat u,c}\times \U_h &\mapsto H(v_0,h_0)\in
W^{2-3/p}_p(\R^n),\\\label{G0}
(\hat u_0,r_0,h_0)\in \U_{\hat u} \times W^{1-2/p}_p(\R^n) \times
\U_h &\mapsto G(\hat u_0,r_0,h_0)\in W^{1-2/p}_p(\R^n)
\end{align}
are real analytic and the first derivatives vanish in $(\hat u_0,r_0,h_0)=0$.
\end{lemma}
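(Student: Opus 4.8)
The plan is to recognise each of the three maps as a finite composition of operations that are real analytic on the Sobolev--Slobodetski\v{i} scale whenever $p>n+3$: (i) bounded multilinear maps, in particular pointwise products, which are real analytic (indeed homogeneous polynomials) and have vanishing derivative at the origin; (ii) the trace operator at the flat interface $y=0$, which is bounded and linear; and (iii) pointwise superposition $g\mapsto\phi(g)$ with a real-analytic $\phi$, which defines a real-analytic Nemytski\v{i} operator on $W^s_p$ as soon as $s$ exceeds the underlying dimension divided by $p$. These are precisely the building blocks used in the proof of Lemma~\ref{lem:N} (\cite[Prop.~6.2]{PruessSimonett1}) for the time-dependent nonlinearity $N$; the present statement is obtained by carrying them out on a single time slice $t=0$, i.e.\ on the spatial trace spaces, which is strictly easier.

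First I would assemble the function-space facts that make this work. Since $p>n+3>n+2$ we have $\U_h=W^{3-2/p}_p(\R^n)\hookrightarrow BUC^2(\R^n)$, so $\nabla h_0\in W^{2-2/p}_p(\R^n)\cap BUC^1(\R^n)$ and $\nabla^2 h_0\in W^{1-2/p}_p(\R^n)\cap BUC(\R^n)$; likewise $\U_{\hat u}=W^{2-2/p}_p(\dot\R^{n+1},\R^{n+1})\hookrightarrow BUC^1(\dot\R^{n+1})$. Since $p>3$, the one-sided traces $g\mapsto g|_{y=0^\pm}$ are bounded from $W^{2-2/p}_p(\dot\R^{n+1})$ into $W^{2-3/p}_p(\R^n)$ and, applied to first derivatives of elements of $\U_{\hat u}$, from $W^{1-2/p}_p(\dot\R^{n+1})$ into $W^{1-3/p}_p(\R^n)$. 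Moreover $W^s_p$ is a Banach algebra once $s$ exceeds the dimension divided by $p$, and -- this is the one point genuinely requiring the mixed-dimension multiplication lemmas of \cite[Sect.~6]{PruessSimonett1} -- a $y$-independent factor $b=b(x)\in W^\sigma_p(\R^n)$ acts as a pointwise multiplier on $W^s_p(\dot\R^{n+1})$ whenever $\sigma\ge s$ and $s>(n+1)/p$; all fractional indices occurring below meet these thresholds because $p>n+3$. Finally, if $\phi$ is real analytic (the functions appearing in \eqref{Gkadef} are, since their denominators never vanish), then $\nabla h_0\mapsto\phi(\nabla h_0)$ is real analytic from $W^{2-2/p}_p(\R^n)$ into itself.

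With this toolkit the three claims follow by inspection. The map \eqref{v0gh0}, $(\hat u_0,h_0)\mapsto v_0^\top\nabla h_0=\sum_j (v_0)_j\,\partial_{x_j}h_0$, is \emph{bilinear}, and the multiplication estimate $W^{2-2/p}_p(\dot\R^{n+1})\cdot W^{2-2/p}_p(\R^n)\hookrightarrow W^{2-2/p}_p(\dot\R^{n+1})$ shows it is bounded; hence it is real analytic and its derivative at the origin vanishes. The map \eqref{H0}, $H(v_0,h_0)=-(\gamma v_0)^\top\nabla h_0$, is the composition of the bounded linear trace $\hat u_0\mapsto\gamma v_0\in W^{2-3/p}_p(\R^n,\R^n)$ -- well defined on $\U_{\hat u,c}$ because $[\hat u_0]=0$ forces the two one-sided traces of $v_0$ to agree -- with the bounded bilinear multiplication $W^{2-3/p}_p(\R^n)\cdot W^{2-2/p}_p(\R^n)\hookrightarrow W^{2-3/p}_p(\R^n)$, hence it is again bounded bilinear, so real analytic with zero derivative at $0$. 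For \eqref{G0} one goes through the summands of $G_v,G_w$ in \eqref{RHS}: each is either a bounded multilinear map of degree $\ge2$ in the quantities $v_0|_{y=0^\pm}$, $\partial_y v_0|_{y=0^\pm}$, $\nabla_x v_0|_{y=0^\pm}$, $\partial_y w_0|_{y=0^\pm}$, $\nabla_x w_0|_{y=0^\pm}$, $r_0$, $\nabla h_0$, $\Delta h_0$ -- the jumps $[\,\cdot\,]$ being bounded linear combinations of the one-sided traces and $\nabla h_0$ entering as a multiplier -- or it carries the factor $G_\kappa(h_0)$, which by \eqref{Gkadef} has the form $\phi(\nabla h_0)$ times a polynomial that is cubic in $(\nabla h_0,\nabla^2 h_0)$; writing such a term as (analytic superposition)$\times$(bounded multilinear map) and combining with the product and trace rules of the second paragraph yields real analyticity of $G$ into the space on the right-hand side of \eqref{G0}, with $\Delta h_0$, $G_\kappa(h_0)$, $r_0$ and the multiplier $\nabla h_0$ supplying the required regularity. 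Since every one of these summands -- as well as $v_0^\top\nabla h_0$ and $H(v_0,h_0)$ -- is a multilinear map of degree $\ge2$ or a product carrying the order-$\ge2$ factor $G_\kappa$, all three maps satisfy $(\,\cdot\,)(0)=0$ and $D(\,\cdot\,)(0)=0$.

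I do not expect a genuinely new difficulty here; the work is bookkeeping. The one thing to be careful about is that the fractional Sobolev indices of the one-sided traces of $\nabla_x\hat u_0$ and $\partial_y\hat u_0$, combined with those of $\nabla h_0$, $\Delta h_0$ and $r_0$ under the mixed-dimension multiplication rules, land in exactly the target spaces of \eqref{v0gh0}--\eqref{G0}, and that $\phi(\nabla h_0)$ retains analyticity in the $W^{2-2/p}_p$-topology. Both points are already contained in the analysis of \cite[Sect.~6]{PruessSimonett1} that underlies Lemma~\ref{lem:N}; restricted to the single time slice $t=0$ no further estimate is needed, so this is where I would simply invoke those results.
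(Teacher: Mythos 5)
Your overall route is the paper's: multiplication-algebra properties of $W^s_p$ for $s\ge 1-2/p$ (valid since $p>n+3$), continuity of the trace for \eqref{H0}, analyticity of a superposition operator to handle $G_\kappa$, and the degree-$\ge 2$ product structure to get vanishing first derivatives at $0$; for \eqref{v0gh0} and \eqref{H0} your argument is essentially verbatim the paper's (continuous bilinear forms, hence real analytic).

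The one step you black-box is, however, both the heart of the paper's proof and, in the form you state it, not correct. The functions $\phi$ coming from the denominators in \eqref{Gkadef} satisfy $\phi(0)\ne 0$, so $\phi(\nabla h_0)$ does \emph{not} lie in $W^{2-2/p}_p(\R^n)$ on the unbounded domain (it tends to a nonzero constant at infinity and is not even in $L_p$); consequently the blanket claim that ``$g\mapsto\phi(g)$ is a real-analytic Nemytski\v{i} operator on $W^s_p$ as soon as $s>\dim/p$'' fails as stated, and the subsequent appeal to the Banach-algebra product rule cannot be applied to the factor $\phi(\nabla h_0)$. The paper is engineered around exactly this point: it factors $G_\kappa$ through the functions $\Psi(v)=v/(a+(1+v^\top v)^{k/2})$ with $\Psi(0)=0$, so that $\Psi(\nabla h_0)\in W^{2-2/p}_p(\R^n)$, and it does not cite analyticity of the superposition as a known fact but proves it: continuity from \cite[Thm. 1.1]{Brezis}, differentiability via the integral form of the Taylor remainder combined with the multiplication-algebra estimate, and iteration of that argument to obtain real analyticity of \eqref{super}. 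Your outline is repaired by the same device (fold a factor of $\nabla h_0$ into the composition, or split off $\phi(0)$ and treat $\phi(\nabla h_0)-\phi(0)$), but some version of this superposition argument -- or a citation of an analytic-Nemytski\v{i} theorem whose hypotheses (in particular $\phi(0)=0$) are verified -- is needed rather than the general assertion. A minor further point: as you note, the one-sided traces of $\nabla\hat u_0$ land in $W^{1-3/p}_p(\R^n)$, so your bookkeeping does not by itself place those summands of $G$ in the stated target $W^{1-2/p}_p(\R^n)$; the paper's proof glosses the same point by calling all ingredients ``$W^{1-2/p}_p(\R^n)$-functions'', and the natural target $W^{1-3/p}_p(\R^n)$ would in any case suffice for the use made of \eqref{G0} later on.
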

\begin{proof}
Since $p>n+3$ we have $W^{1-2/p}_p(\dot\R^{n+1}) \hookrightarrow BUC(\dot\R^{n+1})$
and thus $W^{s}_p(\dot\R^{n+1})$ is a multiplication algebra, i.e. a Banach
algebra under the operation of multiplication, for all
$s\ge 1-2/p$, see e.g.
\cite[Lem. 4.1, Rem. 6.4]{Brezis}. As a consequence, \eqref{v0gh0}
is a continuous bilinear form and thus in $C^\omega(\U_{\hat u}\times\U_h,W^{2-2/p}_p(\dot\R^{n+1}))$.

Similarly, $W^{s}_p(\R^{n})$ is a multiplication algebra for all
$s\ge 1-2/p$. Since the trace
operator $\hat u_0 \in \U_{\hat u,c} \mapsto \gamma v_0\in W^{2-3/p}_p(\R^n)$
is continuous, \eqref{H0}
is a continuous bilinear form and thus real analytic.

Finally $G(\hat u_0,r_0,h_0)$ in \eqref{G0} is a polynomial in
$W^{1-2/p}_p(\R^n)$-functions and in functions of the form
$\nabla h_0 / (a+(1+\nabla h_0^\top\nabla h_0)^{k/2})$ with $a\ge 0$ and
$k\in\{1,2\}$. The function
$\Psi: v\in \R^n \mapsto v/(a+(1+v^\top v)^{k/2})$ is smooth with bounded derivatives
and $\Psi(0)=0$. Since $2-2/p>n/p$ implies
$h_0\in \U_h \mapsto \nabla h_0\in W^{2-2/p}_p(\R^n)\hookrightarrow
W^1_{(2-2/p)p}(\R^n)\cap BUC(\R^n)$, it is well known that
\begin{equation}\label{super}
h_0\in \U_h \mapsto \Psi(\nabla h_0)\in W^{2-2/p}_p(\R^n)
\end{equation}
is well defined
and continuous, see \cite[Thm. 1.1]{Brezis}. It is also differentiable. In fact, for $d\in \U_h$
\begin{align*}
 \Psi(\nabla h_0+\nabla d)-\Psi(\nabla h_0)-\Psi'(\nabla h_0) \nabla d
 =\int_0^1 (\Psi'(\nabla h_0+\tau \nabla d)-\Psi'(\nabla h_0)) \nabla d\,d\tau
\end{align*}
where the integrand is in $BUC([0,1]\times \R^n)$. Moreover, since
$v\mapsto \Psi'(\nabla v)-\Psi'(0)$ is smooth with bounded derivatives
and vanishes at $0$, the integrand is continuous from $[0,1]\to
W^{2-2/p}_p(\R^n)$ again by \cite[Thm. 1.1]{Brezis}. Hence the integral is also a Bochner integral
and thus by using the multiplication algebra property there is $C>0$ with
\begin{align*}
&\|\Psi(\nabla h_0+\nabla d)-\Psi(\nabla h_0)-\Psi'(\nabla h_0) \nabla
d\|_{W^{2-2/p}_p(\R^n)}\\
&\le C \int_0^1 \|\Psi'(\nabla h_0+\tau \nabla d)-\Psi'(\nabla h_0)\|_{W^{2-2/p}_p(\R^n)}\,d\tau
\|\nabla d\|_{W^{2-2/p}_p(\R^n)}
=o(\|d\|_{\U_h}),
\end{align*}
since $d\in \U_h \mapsto \Psi'(\nabla h_0+\nabla d)-\Psi'(0)\in W^{2-2/p}_p(\R^n;\R^n)$
is continuous at $d=0$ by \cite[Thm. 1.1]{Brezis}. Now we can iteratively show that \eqref{super} is
real analytic. In fact, we can write $d\in \U_h \mapsto \Psi'(\nabla h_0) \nabla
d=(\Psi'(\nabla h_0)-\Psi'(0)) \nabla d+\Psi'(0) \nabla d\in W^{2-2/p}_p(\R^n)$. The second term is a constant
mapping in ${\mathcal L}(\U_h,W^{2-2/p}_p(\R^n))$. Moreover, as before
$h_0\in \U_h \mapsto \Psi'(\nabla h_0)-\Psi'(0)\in W^{2-2/p}_p(\R^n;\R^n)$ is well defined
and continuous \cite[Thm. 1.1]{Brezis} and by the same arguments as above also differentiable.
Iterating the argument shows that \eqref{super} is real analytic.

We conlude that \eqref{G0}
is a polynomial in
$W^{1-2/p}_p(\R^n)$-functions and in real analytic functions of the form
\eqref{super}. Since $W^{1-2/p}_p(\R^n)$ is a multiplication algebra, we
conclude that \eqref{G0} is real analytic.

By the product structure of \eqref{v0gh0}--\eqref{G0} the
first derivatives vanish in $0$.


\end{proof}

We will work with the following extension of Banach's fixed point theorem.
\begin{theorem}\label{thm:fix}
\begin{itemize}
\item[a)] Let $U,W.Z$ be real Banach spaces, let $A\in {\mathcal L}(Z,W)$ be an
isomorphism and set $M:=\|A^{-1}\|_{{\mathcal L}(W,Z)}$.
Let $B_Z\subset Z$ be a nonempty closed convex
set and $B_U\subset U$ be a nonempty set.
Moreover, let $K: B_Z\times B_U \to W$ be Lipschitz continuous with
\[
 \|K(z,u)-K(\tilde z,\tilde u)\|_{W}\le L_z \|z-\tilde z\|_Z+L_u \|u-\tilde u\|_U
\quad\forall\, (z,u), (\tilde z,\tilde u)\in B_Z\times B_U
\]
and assume that
\begin{equation}\label{contr}
  A^{-1} K(z,u)\in B_Z\quad\forall\, (z,u)\in B_Z\times B_U\quad\mbox{and}\quad
  M L_z<1.
\end{equation}
Then for all $u\in B_U$ the equation
\[   
   Az=K(z,u)
\]
has a unique solution $z=z(u)\in B_Z$ and
\begin{equation}\label{zLip}
   \|z(u)-z(\tilde u)\|_Z \le \frac{L_u M}{1-M L_z}
\|u-\tilde u\|_U\quad\forall\, u,\tilde u\in B_U.
\end{equation}
\item[b)] Assume in addition that $B_U$ is
a relatively open convex
subset of $u^*+U_L\subset U$, where $U_L$ is a closed linear subspace of $U$
($U_L=U$ is admitted, then $B_U\subset U$ is convex and open).
and that $K: B_Z\times B_U \to W$ is Fr\'echet differentiable.
Then
$u\in B_U\mapsto z(u)\in Z$ is Fr\'echet differentiable, where
$\delta z_d:= Dz(u) s$ solves for any $d\in U_L$ the problem
\begin{equation}\label{delz}
   A \delta z_d=D_z K(z(u),u) \delta z_d+D_u K(z(u),u) d.
\end{equation}
If $DK: B_Z\times B_U \to {\mathcal L}(Z\times U_L,W)$ is Lipschitz continuous
then also $Dz: B_U\to {\mathcal L}(U_L,Z)$ is Lipschitz continuous.

If $K: B_Z\times B_U \to W$ is k-times Fr\'echet differentiable then
$u\in B_U\mapsto z(u)\in Z$ is k-times Fr\'echet differentiable
and if $D^k K$ is Lipschitz continuous on $B_Z\times B_U$ then
$D^k z$ is Lipschitz continuous on $B_U$.
\end{itemize}
\end{theorem}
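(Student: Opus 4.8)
The plan is to read part a) as a parametrized contraction mapping principle and part b) as a quantitative implicit-function argument, the key point being that the smallness condition $ML_z<1$ simultaneously yields the contraction and forces $A-D_zK$ to be an isomorphism. For a), fix $u\in B_U$ and consider $T_u\colon B_Z\to Z$, $T_u(z):=A^{-1}K(z,u)$. Condition \eqref{contr} says $T_u$ maps the nonempty closed convex, hence complete, set $B_Z$ into itself, and the Lipschitz bound on $K$ gives $\|T_u(z)-T_u(\tilde z)\|_Z\le ML_z\|z-\tilde z\|_Z$ with $ML_z<1$; Banach's fixed point theorem then produces the unique $z(u)\in B_Z$ with $Az(u)=K(z(u),u)$. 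For \eqref{zLip} I subtract the two fixed point identities, apply $A^{-1}$, use the joint Lipschitz estimate to obtain $\|z(u)-z(\tilde u)\|_Z\le M(L_z\|z(u)-z(\tilde u)\|_Z+L_u\|u-\tilde u\|_U)$, and absorb the first term on the left using $ML_z<1$.

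For b), I first observe that Fr\'echet differentiability of $K$ together with the Lipschitz bound forces $\|D_zK(z,u)\|_{{\mathcal L}(Z,W)}\le L_z$, so $\|A^{-1}D_zK(z,u)\|_{{\mathcal L}(Z,Z)}\le ML_z<1$; hence $I-A^{-1}D_zK(z,u)$, and therefore $A-D_zK(z,u)\colon Z\to W$, is an isomorphism with inverse bounded by $M/(1-ML_z)$ uniformly in $(z,u)\in B_Z\times B_U$. Consequently \eqref{delz} has for each $d\in U_L$ the unique solution $\delta z_d=(A-D_zK(z(u),u))^{-1}D_uK(z(u),u)d$, and $d\mapsto\delta z_d$ is a bounded linear map $U_L\to Z$. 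To identify it with $Dz(u)$, I fix $u$, pick $d\in U_L$ with $u+d\in B_U$ (possible since $B_U$ is relatively open in $u^*+U_L$), set $z=z(u)$ and $z'=z(u+d)$, subtract $Az=K(z,u)$ from $Az'=K(z',u+d)$, and insert the first-order Taylor expansion of $K$ at $(z,u)$ along the increment $(z'-z,d)\in Z\times U_L$. This yields $(A-D_zK(z,u))(z'-z-\delta z_d)=o(\|(z'-z,d)\|)$; since part a) gives $\|z'-z\|_Z=O(\|d\|_U)$ and $z'-z-\delta z_d=0$ at $d=0$, the right-hand side is $o(\|d\|_U)$, and applying the uniformly bounded inverse shows $z'-z-\delta z_d=o(\|d\|_U)$, i.e. $Dz(u)d=\delta z_d$.

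The remaining assertions follow by a standard bootstrap. Writing $Dz(u)=\Phi(z(u),u)$ with $\Phi(z,u):=(A-D_zK(z,u))^{-1}D_uK(z,u)$, the map $\Phi$ inherits the regularity of $DK$, because operator inversion is real analytic on the open set of invertible operators and operator multiplication is bilinear and continuous; thus $DK\in C^{k-1}$ implies $\Phi\in C^{k-1}$, and Lipschitz continuity of $DK$, resp. of $D^kK$, transfers to $\Phi$, resp. to $D^{k-1}\Phi$, via the resolvent identity $(A-B_1)^{-1}-(A-B_2)^{-1}=(A-B_1)^{-1}(B_1-B_2)(A-B_2)^{-1}$ and the uniform inverse bounds. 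An induction on $j$ then concludes: if $z\in C^j$ for some $j\le k-1$, then $u\mapsto(z(u),u)$ is $C^j$, hence $Dz=\Phi(z(\cdot),\cdot)$ is $C^j$ by the chain rule and so $z\in C^{j+1}$; the base case $j=1$ is supplied by the differentiability already shown (together with Lipschitz continuity of $Dz$ when $DK$ is Lipschitz, which comes from the transfer just described), and the Lipschitz statement for $D^kz$ drops out of the same chain-rule (Fa\`a di Bruno) bookkeeping applied to $D^{k-1}(\Phi(z(\cdot),\cdot))$, using that $D^{k-1}\Phi$ is Lipschitz while all lower-order factors are bounded. I expect the only genuinely delicate part to be this last bookkeeping, keeping the operator bounds uniform throughout; conceptually everything rests on the contraction estimate of a) and the Neumann-series invertibility of $A-D_zK$.
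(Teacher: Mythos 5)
Your proposal is correct, and for part a) and the first-order differentiability in part b) it follows essentially the same path as the paper: Banach's fixed point theorem for $T_u=A^{-1}K(\cdot,u)$ on the complete set $B_Z$, the stability estimate by absorbing the $ML_z$-term, the observation that the Lipschitz bounds force $\|D_zK\|\le L_z$, $\|D_uK\|\le L_u$, and the identification of $Dz(u)d$ with $\delta z_d$ by subtracting the two fixed point identities, Taylor-expanding $K$, and using $\|z(u+d)-z(u)\|_Z=O(\|d\|_U)$ from a) to turn the remainder into $o(\|d\|_U)$. Where you genuinely diverge is in how the regularity is propagated: you write the derivative in closed form, $Dz(u)=\Phi(z(u),u)$ with $\Phi(z,u)=(A-D_zK(z,u))^{-1}D_uK(z,u)$, justify uniform invertibility by a Neumann series, and then bootstrap by induction using smoothness of operator inversion, the resolvent identity, and the chain rule; the paper instead never inverts $A-D_zK$ explicitly but re-applies part a) to the linearized equation $A\,\delta z_d=K^{(1)}(\delta z_d,u;d)$, viewing $d$ (with $\|d\|_U\le 1$) as a parameter, so that Lipschitz continuity of $Dz$ and the higher-order statements follow from the same contraction/stability lemma, with "repeating the argument" covering the induction. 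Your route buys an explicit solution operator and a transparent induction (at the price of some Fa\`a di Bruno bookkeeping and the need to keep lower-order derivative bounds under control), while the paper's route is more economical and self-contained, recycling a) verbatim; both rest on the same two pillars, the contraction estimate and the uniform bound $M/(1-ML_z)$ for the linearization, and both share the (harmless, but worth noting) convention that differentiability of $K$ on the non-open set $B_Z\times B_U$ is understood in the sense of expansions along admissible increments.
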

\begin{proof}
a): By assumption the mapping $T: (z,u)\in B_Z\times B_U \mapsto A^{-1}
K(z,u)\in B_Z$ is well defined and Lipschitz continuous with
Lipschitz constants $M L_z<1$ with respect to $z$ and $M L_u$
with respect to $u$. Hence, for all $u\in B_U$ there exists a unique
fixed point $z=z(u)$ with $z=T(z,u)$ by Banach's fixed point theorem.

For $u,\tilde u\in B_U$ we obtain
\begin{align*}
  \|z(u)-z(\tilde u)\|_Z &=\|T(z(u),u)-T(z(\tilde u),\tilde u)\|_W\\
  & \le M L_z \|z(u)-z(\tilde u)\|_Z+ M L_u \|u-\tilde u\|_U
\end{align*}
and thus \eqref{zLip}.

b): Now let in addition $B_U$ is relatively open in the closed
affine subspace $u^*+U_L$. Moreover, let
$K: B_Z\times B_U \to W$ be Fr\'echet differentiable and let
$u\in B_U$ be arbitrary. Then $\|D_z K(z(u),u)\|_{{\mathcal L}(Z,W)}\le L_z$
and $\|D_u K(z(u),u)\|_{{\mathcal L}(U_L,W)}\le L_u$ and thus for any
$d\in U_L$ the linear problem \eqref{delz} has by Banach's fixed point theorem
a unique solution $\delta z_d\in Z$.

Since $B_U$ is relatively open in $u^*+U_L$,
we find $\delta>0$ such that $u+d\in B_U$ for all $d\in U_L$ with
$\|d\|_U<\delta$. Then
\begin{align*}
 &A (z(u+d)-z(u)-\delta z_d) = \\
 &=K(z(u+d),u+d)-K(z(u),u)-D_z K(z(u),u) \delta z_d-D_u K(z(u),u) d\\
 &=D_z K(z(u),u) (z(u+d)-z(u)-\delta z_d)+o_W(\|z(u+d)-z(u)\|_Z+\|d\|_U).
\end{align*}
By using \eqref{zLip} we conclude that for $d\in U_L$, $\|d\|_U\to 0$
\[
 \|z(u+d)-z(u)-\delta z_d\|_Z \le \frac{M L_u}{1-M L_z} o_Z(\|d\|_U)=o_Z(\|d\|_U).
\]
If $DK: B_Z\times B_U \to {\mathcal L}(Z\times U_L,W)$ is Lipschitz continuous
then \eqref{delz} can be written as
\[
A \delta z_d=K^{(1)}(\delta z_d,u;d),
\]
where $K^{(1)}(\cdot,\cdot;d): Z\times B_U\to W$ has
for all $d\in U_L$, $\|d\|_U\le 1$ the Lipschitz constant
$L_z$ with respect to $\delta z_d$ and a uniform Lipschitz constant
with respect to $u$. Applying the first part of the theorem again yields
that $Dz: B_U \to {\mathcal L}(U_L,Z)$ is Lipschitz continuous.

Repeating the argument for higher derivatives concludes the proof.
\end{proof}

By applying this result to \eqref{stokesL}--\eqref{Ptrans}, we obtain the following extension of
Theorem \ref{thm:ex}.

\begin{theorem}\label{thm:exdiff}
Let $p>n+3$ and consider any $t_0>0$. Let
$\E(t_0), \F(t_0)$ be defined as in \eqref{Edef} and \eqref{Fdef}
and set with $J=(0,t_0)$
\begin{equation}\label{Udef}
\begin{split}
 &\U_{\hat u}:=W_p^{2-2/p}(\dot \R^{n+1},\R^{n+1}),\quad \U_h:=W_p^{3-2/p}(\R^n),\\
&\U_{\hat c}(t_0):=\F_1(t_0)=L_p(J;L_p(\R^{n+1},\R^{n+1})).
\end{split}
\end{equation}
Then for any $t_0>0$  there exists $\hat\varepsilon_0=\hat\varepsilon_0(t_0)>0$
such that for all data
\[
  (\hat u_0,h_0,\hat c)\in \U_{\hat u}\times \U_h \times \U_{\hat c}(t_0)
\]
satisfying the compatibility condition \eqref{comptrans} (or equivalently
\eqref{comp} with $u_0(x,h_0(x)+y)=\hat u_0(x,y)$) as well as the smallness condition
\begin{equation}\label{initeps}
\|\hat u_0\|_{\U_{\hat u}}+\|h_0\|_{\U_h}+\|\hat c\|_{\U_{\hat c}(t_0)}<
\hat\varepsilon_0
\end{equation}
there exists a unique solution of the transformed problem \eqref{Ptrans} with
\[
 (\hat u,\pi,[\pi],h)\in \E(t_0),
\]
Moreover, the mapping
\[
 \{ (\hat u_0,h_0,\hat c)\in \U_{\hat u}\times \U_h\times \U_{\hat c}(t_0):
 (\hat u_0,h_0,\hat c) \mbox{ satisfy \eqref{comptrans}, \eqref{initeps}}\}
 \mapsto (\hat u,\pi,[\pi],h)\in \E(t_0)
\]
is continuous and infinitely many times differentiable with respect to
$(\hat u_0,\hat c)$. 
\end{theorem}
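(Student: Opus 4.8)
The plan is to rewrite the transformed system \eqref{Ptrans} as a fixed point equation $A\bar z=K(\bar z,u)$ of the type covered by Theorem \ref{thm:fix}, with data $u=(\hat u_0,h_0,\hat c)$ and with the isomorphism $A$ furnished by Corollary \ref{cor:stokes}, reading off well-posedness from part~a) and differentiability from part~b). The first step is to remove the inhomogeneous initial data by a reference function. For $(\hat u_0,h_0)$ satisfying \eqref{comptrans} I set $r_0:=2[\hat\mu\partial_y w_0]+\sigma\Delta h_0+G_w(\hat u_0,h_0)$, fix bounded linear right inverses $E_2,E_3,E_4$ of the temporal trace at $t=0$ on $\F_2(t_0),\F_3(t_0),\F_4(t_0)$, and put
\[
 (f^*,f_d^*,g^*,g_h^*):=\big(0,\;E_2(\Div\hat u_0),\;E_3\,G(\hat u_0,r_0,h_0),\;E_4\,H(\hat u_0,h_0)\big)\in\F(t_0).
\]
By Lemma \ref{lem:N0}, $G(\hat u_0,r_0,h_0)$ and $H(\hat u_0,h_0)$ lie in the pertinent trace spaces and depend real analytically on $(\hat u_0,h_0)$ with vanishing first derivative at $0$ (and $r_0$ is real analytic in $(\hat u_0,h_0)$, since $G_w$ does not involve the pressure jump), so $(\hat u_0,h_0)\mapsto(f^*,f_d^*,g^*,g_h^*,\hat u_0,h_0)$ is real analytic. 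The content of \eqref{comptrans} (equivalently \eqref{comp}, cf.\ the Remark) is precisely that this data satisfies the linear compatibility conditions \eqref{complin1}--\eqref{complin2}: $f_d^*(0)=\Div\hat u_0$, $[\hat u_0]=0$, and the tangential stress balance in \eqref{comptrans} is equivalent to $[-\hat\mu\partial_y v_0]-[\hat\mu\nabla_x w_0]=G_v(\hat u_0,r_0,h_0)=g_v^*(0)$. Hence Theorem \ref{thm:Lplin} provides a unique $z^*=z^*(\hat u_0,h_0)\in\E(t_0)$ solving \eqref{stokes} with this data; $z^*$ is real analytic in $(\hat u_0,h_0)$ as the composition of the analytic data map with the bounded linear solution operator, it vanishes at $(\hat u_0,h_0)=0$, one has $\hat u^*(0)=\hat u_0$, $h^*(0)=h_0$, and evaluating the fourth equation of \eqref{stokes} at $t=0$ gives $[\pi^*](0)=r_0$, so the construction is self-consistent.

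Substituting $z=z^*+\bar z$ into \eqref{Ptrans} and letting $\mathcal L$ denote the linear operator of \eqref{stokes} without initial conditions, the system becomes
\[
 \mathcal L\bar z=N(z^*+\bar z)+(\hat c,0,0,0)-\mathcal L z^*=:K\big(\bar z,(\hat u_0,h_0,\hat c)\big).
\]
Since $z(0)=z^*(0)$ reproduces the $t=0$ traces of $F_d,G,H$ while $\F_1(t_0)$ carries no temporal trace, $K$ maps ${}_0\E(t_0)\times B_U$ into ${}_0\F(t_0)$ for a small ball $B_U$ of data satisfying \eqref{comptrans}; moreover $K$ is affine in $\hat c$, Lipschitz in $(\hat u_0,h_0)$, and real analytic in $\bar z$, by Lemmas \ref{lem:N} and \ref{lem:N0} and the preceding step. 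By Corollary \ref{cor:stokes}, $A:=\mathcal L|_{{}_0\E(t_0)}\colon{}_0\E(t_0)\to{}_0\F(t_0)$ is an isomorphism; set $M:=\|A^{-1}\|$. Using $N(0)=0$, $DN(0)=0$ together with the continuity and smallness of $z^*$, $\mathcal L z^*$ and $\hat c$ for data near $0$ (all vanishing at data $0$), one checks that for sufficiently small $\rho>0$ and $\hat\varepsilon_0>0$, with $B_Z:=\{\bar z\in{}_0\E(t_0):\|\bar z\|\le\rho\}$ and $B_U$ the set of data obeying \eqref{comptrans} and \eqref{initeps}, the map $K$ is Lipschitz on $B_Z\times B_U$ with constant $L_z<1/M$ in $\bar z$ and $A^{-1}K(\cdot,u)$ maps $B_Z$ into itself. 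Theorem \ref{thm:fix}~a) then yields, for every such data point, a unique $\bar z=\bar z(\hat u_0,h_0,\hat c)\in B_Z$ and hence a unique solution $z=z^*+\bar z\in\E(t_0)$ of \eqref{Ptrans} depending Lipschitz continuously on $(\hat u_0,h_0,\hat c)$, which establishes existence, uniqueness and continuity.

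For differentiability I freeze $h_0$ and invoke Theorem \ref{thm:fix}~b). As $\mathcal D(\hat u_0,h_0)$ and $\hat\nu(0)$ enter \eqref{comptrans} linearly in $\hat u_0$, $F_d(\hat u_0,h_0)=\nabla h_0^\top\partial_y v_0$ is linear in $\hat u_0$, and $\hat c$ does not occur in \eqref{comptrans}, the set $U_L=U_L(h_0):=\{(\hat u_0,\hat c)\in\U_{\hat u}\times\U_{\hat c}(t_0):(\hat u_0,h_0,\hat c)\text{ satisfies \eqref{comptrans}}\}$ is a closed linear subspace, so the slice of $B_U$ at this $h_0$ is relatively open and convex in $U_L$. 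On $B_Z\times(\text{this slice})$ the map $K$ is $C^\infty$ (indeed real analytic) jointly in $\big(\bar z,(\hat u_0,\hat c)\big)$, with each $D^kK$ Lipschitz on bounded sets; hence Theorem \ref{thm:fix}~b) shows that $(\hat u_0,\hat c)\mapsto\bar z$, and therefore $(\hat u_0,\hat c)\mapsto z=z^*(\hat u_0,h_0)+\bar z$, is infinitely often Fr\'echet differentiable, with the derivative in a direction $(\delta\hat u_0,\delta\hat c)\in U_L$ solving the linearization \eqref{delz} of $\mathcal L\bar z=K(\bar z,u)$.

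The main obstacle is the first step together with the bookkeeping at $t=0$: the reference $z^*$ must be constructed so that it is simultaneously real analytic in the data, renders the residual right-hand side an element of ${}_0\F(t_0)$ (which forces the $t=0$ traces of $F_d,G,H$ to be matched), and is compatible with the solvability conditions \eqref{complin1}--\eqref{complin2} of the linear reference problem; these requirements are reconciled precisely by the nonlinear compatibility condition \eqref{comptrans} and by the mapping properties in Lemma \ref{lem:N0}. The other structural point, which is why the statement asserts differentiability only with respect to $(\hat u_0,\hat c)$, is that the compatibility set is linear in the directions $(\hat u_0,\hat c)$ once $h_0$ is fixed but genuinely nonlinear in $h_0$ (through $\mathcal D(\cdot,h_0)$ and $\hat\nu(0)$), so only the $(\hat u_0,\hat c)$-directions fit the affine-subspace hypothesis of Theorem \ref{thm:fix}~b).
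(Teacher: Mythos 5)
Your overall strategy is the same as the paper's: build a reference solution $z^*(\hat u_0,h_0)$ via Theorem \ref{thm:Lplin} that absorbs the initial data and the compatibility conditions \eqref{complin1}--\eqref{complin2}, rewrite \eqref{Ptrans} as a fixed point problem in ${}_0\E(t_0)$ with the isomorphism $L_0$ from Corollary \ref{cor:stokes}, apply Theorem \ref{thm:fix}~a) for existence/uniqueness/Lipschitz dependence using $N(0)=0$, $DN(0)=0$, and apply part~b) after freezing $h_0$, observing that the compatibility set is (relatively open in) a closed affine subspace in the $(\hat u_0,\hat c)$-directions. Your choice of $r_0$ through the $G_w$-equation rather than the normal projection is harmless (under \eqref{comptrans} both determine the same pressure jump and yield \eqref{compu0}), and the $\F_3$- and $\F_4$-components of your reference data are fine: the temporal trace classes of $\F_3(t_0)$ and $\F_4(t_0)$ are $W_p^{1-3/p}(\R^n)$ and $W_p^{2-3/p}(\R^n)$, and $G(\hat u_0,r_0,h_0)$, $H(\hat u_0,h_0)$ lie there by Lemma \ref{lem:N0}, so bounded right inverses of the time trace (e.g.\ the heat semigroup, as in the paper) do the job.

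There is, however, a genuine gap in the $f_d^*$-component. You ``fix a bounded linear right inverse $E_2$ of the temporal trace at $t=0$ on $\F_2(t_0)$'' and apply it to $\Div\hat u_0$, but $\F_2(t_0)=H_p^1(J;\dot H_p^{-1}(\R^{n+1}))\cap L_p(J;H_p^1(\dot\R^{n+1}))$ and you never verify that $\Div\hat u_0\in W_p^{1-2/p}(\dot\R^{n+1})$ belongs to the temporal trace class of this space. That membership is not automatic: the time regularity is measured in the homogeneous negative-order space $\dot H_p^{-1}(\R^{n+1})$ over the whole space (no jump across $y=0$, control only through a divergence-type duality), and a generic element of $W_p^{1-2/p}(\dot\R^{n+1})$ need not be attainable as such a trace, so there is no abstract right inverse to invoke on this datum. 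This is exactly the point where the paper exploits the structure supplied by \eqref{comptrans}: $\Div\hat u_0=F_d(\hat u_0,h_0)=\nabla h_0^\top\partial_y v_0=\partial_y\bigl(v_0^\top\nabla h_0\bigr)$, with $v_0^\top\nabla h_0\in W_p^{2-2/p}(\dot\R^{n+1})$ depending real analytically on the data by \eqref{v0gh0}; one applies the heat semigroup to extensions of $v_0^\top\nabla h_0$ to obtain $c_d^*\in H_p^1(J;L_p(\R^{n+1}))\cap L_p(J;H_p^2(\dot\R^{n+1}))$ and sets $f_d^*=\partial_y c_d^*$, so that the $H_p^1(J;\dot H_p^{-1}(\R^{n+1}))$ bound comes from differentiating in $y$ a function with $H_p^1(J;L_p)$ regularity. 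Your argument needs this (or an equivalent) construction to close; with it, the remainder of your proof matches the paper's.
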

\begin{proof}
We extend the arguments in \cite{PruessSimonett1} and
apply Theorem \ref{thm:fix} to the transformed formulation \eqref{Ptrans}.

Let $z=(\hat u,\pi,r,h) \in \E(t_0)$ and write \eqref{Ptrans}
\begin{equation}\label{FP1}
  Lz=N(z)+(\hat c,0),\quad (\hat u(0),h(0))=(\hat u_0,h_0),
\end{equation}
where $N$ is defined in \eqref{Ndef}.

Let $(\hat u_0,h_0)$ satisfy \eqref{comptrans} and \eqref{initeps}, where
$\hat\varepsilon_0$ will be adjusted later.

Following \cite{PruessSimonett1}, we first construct $z^*=z^*(\hat u_0,h_0)\in \E(t_0)$ that
satisfies the equation
\begin{equation}\label{zsequ}
  Lz^*=(0,f_d^*,g^*,g_h^*),\quad (\hat u^*(0),h^*(0))=(\hat u_0,h_0),
\end{equation}
where $(0,f_d^*,g^*,g_h^*)\in \F(t_0)$ resolves the compatibility conditions
\eqref{complin1}, \eqref{complin2}.
Then we can write \eqref{FP1} equivalently as
\begin{equation}\label{FP2}
  L \tilde z=N(\tilde z+z^*(\hat u_0,h_0))
 +(\hat c,0)-L z^*(\hat u_0,h_0)=:K(\tilde z;\hat u_0,h_0,\hat c),~~ \tilde z\in {}_0\E(t_0).
\end{equation}
The construction of $z^*$ can be accomplished as in \cite{PruessSimonett1}.
Set
\[
r_0(\hat u_0,h_0)=[\pi_0]:=[\hat \mu \hat \nu(0)^\top {\mathcal D}(\hat
u_0,h_0) \hat \nu(0)]+\sigma (\Delta h_0-G_\kappa(h_0)).
\]
The right hand side consists of several terms of $G(\hat u_0,0,h_0)$ in
\eqref{G0} and thus Lemma \ref{lem:N0} yields that the above mapping
$(\hat u_0,h_0)\in\U_{\hat u}\times \U_h\mapsto [\pi_0]=r_0(\hat u_0,h_0)\in
W^{1-2/p}_p(\R^n)$ is real analytic. Moreover, it is easy to
check that the compatibility conditions hold
\begin{align}\label{compu0}
\begin{aligned}
- [\hat\mu \partial_y v_0]-[\hat\mu\nabla_x w_0] &=G_v(\hat u_0,[\pi_0],h_0)&& \mbox{on $\R^{n}$},\\
- 2[\hat\mu \partial_y w_0]+[\pi_0]-\sigma \Delta h_0 &=G_w(\hat u_0,h_0) && \mbox{on $\R^{n}$},\\
\end{aligned}
\end{align}
Now let $D_{n}=-\Delta$ be the Laplacian in $L_p(\R^{n})$
with domain $H_p^2(\R^{n})$ and set
\[
 g^*(t):= e^{-t D_n} G(\hat u_0,r_0(\hat u_0,h_0),h_0),\quad
 g_h^*(t):=e^{-t D_n} H(\hat u_0,h_0).
\]
By the real analyticity of $r_0(\hat u_0,h_0)$ and Lemma \ref{lem:N0}
the mappings
\begin{align*}
(\hat u_0,h_0)\in \U_{\hat u}\times \U_h &\mapsto G(\hat u_0,r_0(\hat
u_0,h_0),h_0)\in W^{1-2/p}_p(\R^n),\\
(\hat u_0,h_0)\in \U_{\hat u}\times \U_h &\mapsto H(\hat u_0,h_0)\in W^{2-3/p}_p(\R^n)
\end{align*}
are real analytic. Now maximal $L_p$-regularity for $D_n$ yields, see e.g.
\cite[Lem. 8.2]{Escher}
\begin{align*}
  & g^*\in H_p^1(J;W^{-1-1/p}_p(\R^{n}))\cap
L_p(J;W^{1-1/p}_p(\R^{n}))\hookrightarrow \F_3(t_0),\\
  & g_h^*\in H_p^1(J;W^{-1/p}_p(\R^{n}))\cap L_p(J;W^{2-1/p}_p(\R^{n}))\hookrightarrow
\F_4(t_0),
\end{align*}
where the imbeddings follow by real interpolation and $g^*, g_h^*$ are real
analytic in $(\hat u_0,h_0)\in \U_{\hat u}\times\U_h$. \eqref{compu0} ensures
that \eqref{complin2} holds for $g^*$. 
Next, let
\[
  c_d^*(t)=\begin{cases}
  {\mathcal R}_+ e^{-t D_{n+1}} {\mathcal E}_+ v_0^\top \nabla h_0 &
\mbox{in $\R_+^{n+1}$},\\
  {\mathcal R}_- e^{-t D_{n+1}} {\mathcal E}_- v_0^\top \nabla h_0 &
\mbox{in $\R_-^{n+1}$},
  \end{cases}
\]
where ${\mathcal E}_\pm\in {\mathcal L}(W^{2-2/p}(\R_\pm^{n+1}),W^{2-2/p}(\R^{n+1})))$
are extension operators and ${\mathcal R}_\pm$ are the restrictions to
$\R_\pm^{n+1}$. Now $(\hat u_0,h_0)\in \U_{\hat u}\times\U_h \mapsto v_0^\top \nabla h_0\in W^{2-2/p}(\dot\R^{n+1})$
is by Lemma \ref{lem:N0} real analytic.
By $L_p$-regularity for $D_{n+1}$
$c_d^*\in H_p^1(J;L_p(\R^{n+1}))\cap L_p(J;H_p^2(\dot \R^{n+1}))$ and thus
\[
  f_d^*:=\partial_y c_d^*\in \F_2(t_0)\quad\mbox{with}\quad
f_d^*(0)=F_d(v_0,h_0)
\]
is real analytic with respect to $(\hat u_0,h_0)\in \U_{\hat u}\times\U_h$.
Hence, also \eqref{complin1} holds for $f_d^*$ and we conclude that
$R^*:=(0,f_d^*,g^*,g_h^*)\in \F(t_0)$ satisfies the compatibility conditions
\eqref{complin1}, \eqref{complin2} and by construction
$(\hat u_0,h_0)\in \U_{\hat u}\times\U_h\mapsto R^*\in \F(t_0)$ is real analytic. Hence, by Theorem \ref{thm:Lplin}
the linear problem \eqref{zsequ} has a unique solution $z^*=z^*(\hat u_0,h_0)$
that is real analytic and by Lemma \ref{lem:N0} the first derivative
vanishes in $0$, i.e., $Dz^*(0,0)=0$.

Now consider \eqref{FP2}. By construction of $z^*$ the right hand side
of \eqref{FP2} is in ${}_0\F(t_0)$. Denote by
$L_0 \in {\mathcal L}({}_0\E(t_0),{}_0\F(t_0))$ the restriction
of $L$ which is an isomorphism by Corollary \ref{cor:stokes}.
Hence, \eqref{FP2} can be written as
\begin{equation}\label{FP3}
  L_0 \tilde z=N(\tilde z+z^*(\hat u_0,h_0))
 +(\hat c,0)-L z^*(\hat u_0,h_0)=:K(\tilde z;\hat u_0,h_0,\hat c),~~ \tilde z\in {}_0\E(t_0).
\end{equation}
To apply Theorem \ref{thm:fix}
we set now with suitable $\hat\varepsilon_0>0$ and $\delta>0$
\begin{align*}
 B_U(\hat\varepsilon_0)&:= \{ (\hat u_0,h_0,\hat c)\in \U_{\hat u}\times \U_h\times \U_{\hat c}(t_0):\,
 (\hat u_0,h_0,\hat c) \mbox{ satisfy \eqref{comptrans},
\eqref{initeps}}\},\\
 B_Z(\delta)&:=\{ \tilde z\in {}_0\E(t_0):\, \|\tilde
 z\|_{{}_0\E(t_0)}\le\delta\},
\end{align*}
where $\hat\varepsilon_0,\delta>0$ will be adjusted later.

Let $M=\|L_0^{-1}\|_{{\mathcal L}({}_0\F(t_0),{}_0\E(t_0))}$.
We know by Lemma \ref{lem:N} and the properties of $z^*$ that the right hand
side
\begin{equation}\label{Kprop}
(\tilde z,\hat u_0,h_0,\hat c)\in {}_0\E(t_0)\times \U_{\hat u}\times \U_h\times \U_{\hat c}(t_0)
\mapsto K(\tilde z;\hat u_0,h_0,\hat c)\in \F(t_0)
\end{equation}
is real analytic with
\[
 K(0)=0,\quad D_{(\tilde z,\hat u_0,h_0)} K(0)=0.
\]
Hence, the Lipschitz constant $L_z$ of $K$ with respect to $\tilde z$
is arbitrary small close to $0$ and the Lipschitz constant of $K$
with respect to $(\hat u_0,h_0,\hat c)$ is $L_u=2$ close enough to $0$
(note that the Lipschitz constant with respect to $\hat c$ is
$1$). Hence, if we set $\delta=4 M \hat\varepsilon_0$ then for
$\hat\varepsilon_0$ small enough $K$ has the Lipschitz constants
$L_z=1/(2M)$ and $L_u=2$ on $B_Z(\delta)\times B_U(\hat\varepsilon_0)$.
Hence, for all $(\tilde z,\hat u_0,h_0,\hat c)\in B_Z(\delta)\times B_U(\hat\varepsilon_0)$
\begin{align}\label{self}
\begin{split}
& \|L_0^{-1} K(\tilde z;\hat u_0,h_0,\hat c)\|_{{}_0\E(t_0)}
\\&
\le M L_z \|\tilde z\|_{{}_0\E(t_0)}
+ M L_u (\|\hat u_0\|_{\U_{\hat u}}+\|h_0\|_{\U_h}+\|\hat c\|_{\U_{\hat c}(t_0)})
< \frac{1}{2} \delta+2M \hat\varepsilon_0
= \delta. 
\end{split}
\end{align}
Thus, \eqref{contr} is satisfied and
\eqref{FP3} has by Theorem \ref{thm:fix} for all
$(\hat u_0,h_0,\hat c)\in B_U(\hat\varepsilon_0)$ a unique solution
$\tilde z=\tilde z(\hat u_0,h_0,\hat c)\in B_Z(\delta)$ satisfying the
Lipschitz stability \eqref{zLip}.
Since also the real analytic operator $z^*(\hat u_0,h_0)\in \E(t_0)$
is Lipschitz continuous on $B_U(\hat\varepsilon_0)$, the solution
$z(\hat u_0,h_0,\hat c)=\tilde z+z^*\in \E(t_0)$ is unique
and Lipschitz continuous on $B_U(\hat\varepsilon_0)$.

Now let $(\hat u_0^*,h_0^*,\hat c^*)\in B_U(\hat\varepsilon_0)$ be arbitrary.
Then $\{(\hat u_0,h_0^*,\hat c)\in B_U(\hat\varepsilon_0)\}$ is a relatively
open subset of an affine subspace of $\U_{\hat u}\times \U_h\times \U_{\hat c}(t_0)$.
Since \eqref{Kprop} is real analytic, it follows form
Theorem \ref{thm:fix}, b) that $\tilde z(\hat u_0,h_0^*,\hat c)\in {}_0\E(t_0)$
is infinitely many times differentiable with respect to
$(\hat u_0,\hat c)$ and the same holds for
$z(\hat u_0,h_0^*,\hat c)=\tilde z+z^*\in \E(t_0)$.
\end{proof}

\subsection{Results for the original problem}\label{sec:32}
We transfer now the results of Theorem \ref{thm:exdiff} for the transformed problem
\eqref{Ptrans} to the original problem \eqref{P}. To this end, we define for
$f_0\in \U_h$ the spaces
\begin{equation}\label{Udeforg}
\begin{split}
 &\U_{u}(h_0):=W_p^{2-2/p}(\R^{n+1}\setminus\Gamma(0),\R^{n+1}),\quad
\U_{c}(t_0):=L_p(J;H^1_p(\R^{n+1},\R^{n+1})).
\end{split}
\end{equation}
Since the pressure $\pi(t,\cdot)$ is only determined up to a constant, we select from now
on without restriciton the unique representative satisfying (note that $[\pi]$ is
uniquely determined in \eqref{Ptrans})
\begin{equation}\label{prep}
  \pi\in \E_1(t_0),\quad \int_{[-1,1]^n} \pi(t,x,0-)\,dx=0\quad\mbox{for a.a. } t\in (0,t_0).
\end{equation}
Then with the convention \eqref{prep} and by the trace theorem,
we find a Poincar\'e constant $C_P>0$ with
\begin{equation}\label{ppoin}
    \|\pi(t,\cdot)\|_{H_p^1(\dot\R^{n+1})}\le C_P 
    (\|\pi(t,\cdot)\|_{\dot H_p^1(\dot\R^{n+1})}+\|[\pi]\|_{W_p^{1-1/p}(\R^n)})
\end{equation}

The following imbeddings will be useful.
\begin{lemma}
Let $p>n+3$. Then the following imbeddings hold with
$J=(0,t_0)$, $t_0>0$.
\begin{align}\label{imba}
& \E_1(t_0) \hookrightarrow C(\bar J;BUC^1(\dot\R^{n+1},\R^{n+1}))\cap
C(\bar J;BUC(\R^{n+1},\R^{n+1})),\\\label{imbb}
& \E_1(t_0) \hookrightarrow H^1_p(J\times \R^{n+1},\R^{n+1})\cap
C(\bar J;H^1_p(\R^{n+1},\R^{n+1})),\\\label{imbb1}
& \E_1(t_0) \hookrightarrow H^1_p(J\times \R^{n+1},\R^{n+1})\cap
C(\bar J;H^1_{\tilde p}(\R^{n+1},\R^{n+1}))\quad\forall\,\tilde p\in [p,\infty),\\\label{imbc}
& \E_4(t_0) \hookrightarrow C^1(\bar J;BC^1(\R^n))\cap
C(\bar J;BC^2(\R^{n})).
\end{align}
\end{lemma}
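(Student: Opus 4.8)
The plan is to establish the four embeddings by combining the standard mixed-derivative / trace embedding theorems for anisotropic Bochner–Sobolev spaces with the restriction $p > n+3$, which makes the relevant Sobolev exponents supercritical. First recall the abstract tool: if $u \in H_p^1(J;L_p(\Omega)) \cap L_p(J;H_p^2(\Omega))$ with $\Omega \subset \R^m$ a suitable (half-)space domain, then by real interpolation (the mixed-derivative theorem, see e.g. the references to \cite{PruessSimonett1}) one has $u \in H_p^s(J;H_p^{2(1-s)}(\Omega))$ for every $s \in [0,1]$, and in particular $u \in BUC(\bar J; W_p^{2-2/p}(\Omega))$, i.e. the temporal trace space is $\U_{\hat u}$-like. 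This is the engine behind all four claims.

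For \eqref{imba}: take $\Omega = \dot\R^{n+1}$, $m = n+1$. Since $p > n+3 > n+1 = m$, the elliptic Sobolev embedding gives $W_p^{2-2/p}(\dot\R^{n+1}) \hookrightarrow BUC^1(\dot\R^{n+1})$ (indeed $2 - 2/p - (n+1)/p > 1$ follows from $p > n+3$). Composing with the temporal-trace embedding $\E_1(t_0) \hookrightarrow C(\bar J; W_p^{2-2/p}(\dot\R^{n+1}))$ yields the first half. For the second half, the condition $[\hat u] = 0$ built into $\E_1(t_0)$ means that $\hat u(t,\cdot)$ has no jump at $y=0$, so the piecewise-$BUC^1$ regularity across a common trace upgrades to $\hat u(t,\cdot) \in BUC(\R^{n+1})$ globally; continuity in $t$ is inherited. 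For \eqref{imbb}: $\E_1(t_0) \subset H_p^1(J;L_p) \cap L_p(J;H_p^2) \hookrightarrow H_p^1(J \times \R^{n+1})$ is immediate once the jump condition removes the interface (so the spatial $H_p^2$-regularity on each half-space, glued along a matching trace with matching first-order traces coming from $BUC^1$, gives global $H_p^1$ in $(t,x)$); the second factor $C(\bar J; H_p^1(\R^{n+1}))$ is again the temporal trace embedding followed by $W_p^{2-2/p} \hookrightarrow H_p^1$. For \eqref{imbb1}: since $W_p^{2-2/p}(\R^{n+1}) \hookrightarrow W_{\tilde p}^{1}(\R^{n+1})$ for every $\tilde p \in [p,\infty)$ (using $2 - 2/p - (n+1)/p \ge 1 - (n+1)/\tilde p$, which holds because the left side already exceeds $1$), this is \eqref{imbb} with the spatial integrability relaxed. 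For \eqref{imbc}: here $\Omega = \R^n$, and the mixed-derivative theorem applied to the intersection defining $\E_4(t_0)$ gives, after interpolation, $\E_4(t_0) \hookrightarrow C^1(\bar J; W_p^{2-2/p}(\R^n)) \cap C(\bar J; W_p^{3-2/p}(\R^n)) = C^1(\bar J;\U_h) \cap \ldots$; then $p > n+3 > n$ gives $W_p^{2-2/p}(\R^n) \hookrightarrow BC^1(\R^n)$ and $W_p^{3-2/p}(\R^n) \hookrightarrow BC^2(\R^n)$ by elliptic Sobolev embedding.

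The main obstacle is bookkeeping rather than depth: one must verify carefully that the intersection spaces $\E_1(t_0)$ and $\E_4(t_0)$ genuinely embed into the claimed temporal-trace spaces. For $\E_4(t_0)$ this is delicate because it is an intersection of \emph{four} anisotropic spaces with different balances of time- and space-smoothness, and one must check that the mixed-derivative / trace theorem applies to each and that the resulting exponents combine to give exactly $C^1(\bar J; BC^1) \cap C(\bar J; BC^2)$; the borderline exponents $p = 3/2, 3$ excluded in Theorem~\ref{thm:Lplin} should be checked not to interfere here since $p > n+3 \ge 6$. The other subtlety is the gluing argument across $y=0$: one must invoke that $[\hat u]=0$ together with the $BUC^1$-regularity of the traces from both sides (available by \eqref{imba}) implies that the weak gradient of $\hat u$ has no singular part on $\{y=0\}$, so that $\hat u \in H_p^1(\R^{n+1})$ and $\hat u \in H_p^1(J \times \R^{n+1})$ globally, not merely on each half-space. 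Once these two points are settled, the remaining inclusions are routine Sobolev embeddings powered by the single inequality $p > n+3$.
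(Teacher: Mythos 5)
Your argument is correct in substance, but it follows a partly different route from the paper, which settles most of the lemma by citation: \eqref{imba} and \eqref{imbc} are quoted directly from \cite[Lem.~6.1]{PruessSimonett1}, the temporal trace embedding $\E_1(t_0)\hookrightarrow C(\bar J;W_p^{2-2/p}(\dot\R^{n+1}))$ is taken from Amann, \eqref{imbb} then follows exactly as you argue (the obvious inclusion into $H_p^1(J\times\dot\R^{n+1})$ plus the fact that $[\hat u]=0$ and continuity remove any singular part of the spatial gradient on $\{y=0\}$), and \eqref{imbb1} is obtained by interpolating between \eqref{imba} and \eqref{imbb} rather than, as you do, by applying the supercritical Sobolev embedding $W_p^{2-2/p}(\dot\R^{n+1})\hookrightarrow H^1_{\tilde p}(\dot\R^{n+1})$ to the temporal trace space; both routes are valid and of comparable effort, yours making the dependence on the exponent inequality $2-2/p-(n+1)/p>1\Leftrightarrow p>n+3$ explicit, the paper's avoiding any further Sobolev-exponent bookkeeping. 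One point in your sketch is imprecise and would need the sharper computation you yourself flag: for \eqref{imbc} the trace theorem applied to $\partial_t h\in W_p^{1-1/(2p)}(J;L_p(\R^n))\cap L_p(J;W_p^{2-1/p}(\R^n))$ does \emph{not} give $C(\bar J;W_p^{2-2/p}(\R^n))$ but only $C(\bar J;W_p^{s}(\R^n))$ with $s=(2-1/p)\tfrac{p-3/2}{p-1/2}<2-2/p$; the embedding $W_p^{s}(\R^n)\hookrightarrow BC^1(\R^n)$, i.e.\ $s>1+n/p$, turns out to hold exactly when $p>n+3$, so your conclusion stands, but with the exponent as you wrote it the claim is overstated, and this is precisely the delicate bookkeeping that the paper sidesteps by citing \cite[Lem.~6.1]{PruessSimonett1}.
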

\begin{proof}
For the imbeddings \eqref{imba}, \eqref{imbc} see \cite[Lem.
6.1]{PruessSimonett1}.
Moreover, it is obvious that
\[
 \E_1(t_0) \hookrightarrow H^1_p(J\times \dot\R^{n+1},\R^{n+1})
\]
and also $\E_1(t_0) \hookrightarrow C(\bar J;W_p^{2-2/p}(\dot\R^{n+1},\R^{n+1}))$
holds, see \cite[Theorem III.4.10.2]{Amann}
Since the functions $\hat u\in \E_1(t_0)$ are continuous by \eqref{imba} and
thus $[\hat u]=0$, this implies the imbedding \eqref{imbb}.
Now \eqref{imbb1} follows from interpolation between \eqref{imba} and
\eqref{imbb}.
\end{proof}
\begin{theorem}\label{thm:embed}
Let $(\hat u,\pi,[\pi],h)\in\E(t_0)$,
$h_0\in \U_h$, $u_0\in \U_u(h_0)$,
and consider, see \eqref{trafo},
\begin{align}\label{trafoi}
  &u(t,x,y)=\hat u(t,x,y-h(t,x)),~ q(t,x,y)=\pi(t,x,y-h(t,x)),\\
  &u_0(x,y)=\hat u_0(x,y-h_0(x)).\notag
\end{align}
Then there exist constants $C(\|h\|_{\E_4(t_0)})>0$ and $C(\|h_0\|_{\U_h})$ such that
\begin{equation}\label{orgest}
\begin{split}
 &\|u\|_{W_p^1(J\times\R^{n+1},\R^{n+1})}+\|u\|_{L_p(J;H_p^2(\R^{n+1}\setminus\Gamma(t),\R^{n+1})}\le
 C (\|h\|_{\E_4(t_0)}) \|\hat u\|_{\E_1(t_0)},\\\notag
 &\|q\|_{L_p(J;\dot H_p^1(\R^{n+1}\setminus\Gamma(t),\R^{n+1})}
 \le C(\|h\|_{\E_4(t_0)}) \|\pi\|_{\E_2(t_0)},\\
&\|[q]\|_{L_p(J;W_p^{1-1/p}(\Gamma(t)))}
\le 
C (\|h\|_{\E_4(t_0)})
\|[\pi]\|_{L_p(J;W_p^{1-1/p}(\R^n))},\\
& \|\hat u_0\|_{\U_{\hat u}}\le C (\|h_0\|_{\U_h}) \|u_0\|_{\U_{u}(h_0)}.
\end{split}
\end{equation}
\end{theorem}
\begin{proof}
Let $(\hat u,\pi,[\pi],h)\in\E(t_0)$ and consider, see \eqref{trafo},
\[
  u(t,x,y)=\hat u(t,x,y-h(t,x)).
\]
By \eqref{imbc} the mapping $T_{h(t)}: (x,y)\mapsto (x,y-h(t,x))$ is
for all $t\in [0,t_0]$ a $C^2$-diffeomorphism with
$T_{h(t)}^{-1}(x,y)=(x,y+h(t,x))$ and $\mbox{det}(DT_{h(t)}(x,y))=1$.
By \eqref{imbb} the chain rule for Sobolev functions can be applied and
yields $u\in H_p^1(J\times\R^{n+1},\R^{n+1})$ with
\begin{align*}
  \partial_t u(t,x,y)&=\partial_t \hat u(t,T_{h(t)}(x,y))-\partial_y \hat u(t,T_{h(t)}(x,y)) \partial_t h(t,x),\\
  \partial_{(x,y)} u(t,x,y)&=\partial_{(x,y)} \hat u(t,T_{h(t)}(x,y)) DT_{h(t)}(x,y).
\end{align*}
Moreover, again by \eqref{imbc} and $\nabla\hat u\in L_p(J;H^1_p(\dot\R^{n+1},\R^{n+1,n+1})$
we have
\[
 \|u\|_{H_p^1(J\times\R^{n+1},\R^{n+1})}+\|u\|_{L_p(J;H_p^2(\R^{n+1}\setminus\Gamma(t),\R^{n+1})}\le
 C(\|h\|_{\E_4(t_0)}) \|\hat u\|_{\E_1(t_0)}.
\]
Completely analogous one obtains
\[
 \|q\|_{L_p(J;\dot H_p^1(\R^{n+1}\setminus\Gamma(t),\R^{n+1})}
\le C(\|h\|_{\E_4(t_0)}) \|\pi\|_{\E_2(t_0)}.
\]
Now consider $\hat r=[\pi]$ and $r=[q]$ then $r(t,x,h(t,x))=\hat r(t,x)$ and
\begin{align*}
\|r(t,\cdot)\|_{W_p^{1-1/p}(\Gamma(t))}^p&=\int_{\R^n}\int_{\R^n}
\frac{|r(t,x,h(t,x))-r(t,\tilde x,h(t,\tilde x))|^p}{(\sqrt{|x-\tilde x|^2+|h(t,x)-h(t,\tilde
x)|^2})^{n+p-1}}\\
&\quad\qquad\qquad\cdot \sqrt{1+|\nabla h(t,x)|^2} \sqrt{1+|\nabla h(t,\tilde x)|^2}\, dx\,d\tilde x\\
&\le\int_{\R^n}\int_{\R^n}
\frac{|\hat r(t,x)-\hat r(t,\tilde x)|^p}{|x-\tilde x|^{n+p-1}}\, dx\,d\tilde x (1+|h(t,\cdot)|_{BC^1(\R^n)})^2\\
&\le \|\hat r(t,\cdot)\|_{W_p^{1-1/p}(\R^n)}^p C(\|h\|_{\E_4(t_0)})^p.
\end{align*}
Similarly, one obtains also the estimate for $\|\hat u_0\|_{\U_{\hat u}}$,
see \cite[Proof Thm 1.1]{PruessSimonett1}.
\end{proof}
\begin{lemma}\label{lem:uqdiff}
Consider the transformation \eqref{trafoi}, where we choose for 
$\pi\in \E_2(t_0), [\pi]\in \E_3(t_0)$
the unique representative $\pi$ satisfying \eqref{prep}.

Then for all $\tilde p\in [p,\infty)$ the mapping
\begin{equation}\label{udef}
 (\hat u,\pi,[\pi],h)\in\E(t_0)\mapsto 
 u\in C(\bar J;L_{\tilde p}(\R^{n+1},\R^{n+1}))
\end{equation}
is continuously differentiable with derivative
\[
 (\delta\hat u,\delta\pi,[\delta\pi],\delta h)\in\E(t_0)\mapsto \delta u(t,x,y)=
 \delta\hat u(t,x,y-h(t,x))-\partial_y \hat u(t,x,y-h(t,x))\delta h(t,x).
\]
Let ${\mathcal E}_{\pm}\in{\mathcal L}(H^l_p(\R^{n+1}_\pm),H^l_p(\R^{n+1}))$
be extension operators for $l=1,2$ and set
\begin{align}\label{upm}
\begin{split}
\hat u_\pm(t,\cdot)&={\mathcal E}_{\pm} \hat u(t,\cdot),\quad
 u_\pm(t,x,y)=\hat u_\pm(t,T_{h(t)}(x,y)),\\
 \pi_\pm(t,\cdot)&={\mathcal E}_{\pm} \pi(t,\cdot),\quad
 q_\pm(t,x,y)=\pi_\pm(t,T_{h(t)}(x,y)).
\end{split}
\end{align}
Then the mappings
\begin{align}\label{upmdef}
 (\hat u,\pi,[\pi],h)\in\E(t_0)&\mapsto u_\pm \in
L_p(J;H_p^1(\R^{n+1},\R^{n+1})),\\
\label{qpmdef}
 (\hat u,\pi,[\pi],h)\in\E(t_0)&\mapsto q_\pm\in L_p(J;L_p(\R^{n+1}))
\end{align}
are continuously differentiable with derivative
\begin{align*}
 (\delta\hat u,\delta\pi,[\delta\pi],\delta h)\in\E(t_0)&\mapsto 
 \binom{\delta u_\pm}{\delta q_\pm}(t,x,y)\\
 & =\binom{\delta\hat u_\pm}{\delta\pi_\pm}(t,x,y-h(t,x))
 -\partial_y \binom{\hat u_\pm}{\pi_\pm}(t,x,y-h(t,x)) \delta h(t,x).
\end{align*}
\end{lemma}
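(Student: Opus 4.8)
The plan is to reduce the whole statement to one differentiability fact about the pull-back by the time-dependent diffeomorphism $T_{h(t)}$ and then apply it componentwise. The key observation is that the map $(\hat u,\pi,[\pi],h)\mapsto$ (any of $u$, $u_\pm$, $q_\pm$) factors as a \emph{bilinear-type composition}: first extract from $z=(\hat u,\pi,[\pi],h)\in\E(t_0)$ the pair $(g,h)$ where $g$ is $\hat u$ (resp.\ $\hat u_\pm={\mathcal E}_\pm\hat u$, resp.\ $\pi_\pm={\mathcal E}_\pm\pi$), a \emph{linear} bounded operation into the appropriate regularity class by \eqref{imbb}, \eqref{imbc} and the boundedness of ${\mathcal E}_\pm$; then apply the substitution operator $(g,h)\mapsto [(x,y)\mapsto g(t,x,y-h(t,x))]$. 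So it suffices to show the substitution operator is $C^1$ from the relevant product space into $C(\bar J;L_{\tilde p})$ (for \eqref{udef}), into $L_p(J;H^1_p(\R^{n+1}))$ (for \eqref{upmdef}), and into $L_p(J;L_p(\R^{n+1}))$ (for \eqref{qpmdef}), with the derivative given by the chain rule, $\delta g(t,x,y-h(t,x))-\partial_y g(t,x,y-h(t,x))\,\delta h(t,x)$.

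The main steps are as follows. First I would record the regularity available: by \eqref{imbb1}, $\hat u\in C(\bar J;H^1_{\tilde p}(\R^{n+1}))$ for every $\tilde p\in[p,\infty)$, hence $\hat u_\pm\in C(\bar J;H^1_{\tilde p}(\R^{n+1}))$; by \eqref{imba}, $\hat u\in C(\bar J;BUC^1(\dot\R^{n+1}))$, so $\partial_y\hat u_\pm$ lies in $C(\bar J;L_{\tilde p}\cap L_\infty)$; by the Stokes regularity $\pi\in L_p(J;\dot H^1_p(\dot\R^{n+1}))$ so $\pi_\pm\in L_p(J;H^1_p(\R^{n+1}))$ (locally; using that the extension is into $H^1_p$); and by \eqref{imbc}, $h\in C^1(\bar J;BC^1(\R^n))\cap C(\bar J;BC^2(\R^n))$, so $T_{h(t)}$ is a measure-preserving $C^2$-diffeomorphism depending $C^1$ on $t$ and Lipschitz-continuously on $h\in\E_4(t_0)$. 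Second, for the \emph{zeroth-order} targets \eqref{udef} and \eqref{qpmdef}, I would prove directly, using the identity
\[
 g(t,x,y-h-\delta h)-g(t,x,y-h)= -\int_0^1 \partial_y g(t,x,y-h-\tau\delta h)\,\delta h\,d\tau,
\]
that the difference quotient converges: for $q_\pm$ one bounds the $L_p$ norm of the remainder by $\|\partial_y\pi_\pm(t,\cdot)\|_{L_p}\,\|\delta h(t,\cdot)\|_\infty$ after the change of variables (Jacobian $1$), combined with continuity of translations in $L_p$ to handle the $\partial_y\pi_\pm$ evaluated at the shifted argument minus the unshifted one, yielding an $o(\|\delta z\|_{\E(t_0)})$ estimate after integrating over $J$; for $u$ (and $u_\pm$ at the $L_{\tilde p}$ level) one argues the same way with $\|\partial_y\hat u\|_{C(\bar J;L_{\tilde p}\cap L_\infty)}$, using the $BUC^1$-bound to get a uniform-in-$t$ remainder, hence convergence in $C(\bar J;L_{\tilde p})$. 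Continuity of the derivative follows because the derivative is again such a bilinear composition, and each factor depends continuously on $z$. Third, for the \emph{first-order} target \eqref{upmdef} I would differentiate the spatial gradient by the chain rule: $\nabla_{(x,y)}u_\pm = (\nabla\hat u_\pm)\circ T_{h(t)}\cdot DT_{h(t)}$ with $DT_{h(t)}=\begin{pmatrix} I & 0\\ -\nabla h & 1\end{pmatrix}$, so $u_\pm$ is a product of $(\nabla\hat u_\pm)\circ T_{h(t)}\in L_p(J;L_p)$-type terms (using $\nabla\hat u_\pm\in L_p(J;H^1_p)$ and then only $L_p$ after composition — but one actually needs $\nabla\hat u_\pm$ in a slightly better space, namely $L_p(J;L_p)$ after composition together with an $L_\infty$-in-space multiplier $DT_{h(t)}$), and the $C^1$-regularity then follows from the zeroth-order substitution result applied to $\partial_j\hat u_\pm$ (which lies in $L_p(J;H^1_p)$, hence the same argument as for $q_\pm$ applies) multiplied by the entries of $DT_{h(t)}$, which depend affinely and boundedly on $h\in\E_4(t_0)\hookrightarrow C(\bar J;BC^1)$, with all products taken in $L_p(J;L_p)$ and controlled by the multiplication/H\"older structure. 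The derivative formula for $u_\pm$ in $H^1_p$ is then obtained by differentiating the product, and it is checked to agree with $\delta\hat u_\pm\circ T_{h} -(\partial_y\hat u_\pm\circ T_h)\,\delta h$ in the $H^1_p$ sense.

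I expect the \textbf{main obstacle} to be the first-order statement \eqref{upmdef}: one must differentiate $u_\pm$ in the $L_p(J;H^1_p)$-norm, and the naive chain-rule expression involves $\partial_y\hat u_\pm$ \emph{differentiated once more in $y$} (i.e.\ a second derivative of $\hat u$) paired with $\nabla h$, which is only barely controlled. The resolution is to exploit that $\nabla\hat u_\pm\in L_p(J;H^1_p(\R^{n+1}))$ without any interface jump (the extension is genuinely in $H^1_p$), so that the substitution operator acting on $\partial_j\hat u_\pm$ is exactly of the form covered by the $L_p(J;L_p)$-argument used for $q_\pm$ — the "extra" $y$-derivative stays inside the $H^1_p$-regularity that is already available, and is never exposed as a stand-alone second derivative. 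The second technical point is uniform control of constants: all estimates carry the factor $C(\|h\|_{\E_4(t_0)})$, and one uses that $\delta h\to 0$ in $\E_4(t_0)$ keeps $T_{h+\delta h}$ in a fixed bounded family of diffeomorphisms, so translation-continuity is applied uniformly; this is where the continuity (not merely existence) of the derivative is secured.
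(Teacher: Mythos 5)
Your proposal is correct and follows essentially the same route as the paper: the fundamental-theorem-of-calculus remainder representation combined with continuity of translations in $L_{\tilde p}$ resp. $L_p$ (the paper's \eqref{Cshift}, \eqref{Lpshift}), the imbeddings \eqref{imbb1}, \eqref{imbc}, and the fact that the extensions $\hat u_\pm$ lie in $L_p(J;H^2_p(\R^{n+1}))$ so that the exposed second derivatives remain controlled. The only difference is organizational: for \eqref{upmdef} you reduce to the zeroth-order substitution result applied to $\nabla\hat u_\pm$ plus a product rule with $DT_{h(t)}$, whereas the paper estimates the gradient of the remainder directly term by term, which amounts to the same computation.
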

\begin{proof}
Define as in the previous proof the $C^2$-diffeomorphisms $T_{h(t)}: (x,y)\mapsto
(x,y-h(t,x))$. Then $u(t,x,y)=\hat u(t,T_{h(t)}(x,y))$. Let
$(\hat u,\pi,[\pi],h),(\delta \hat u,\delta\pi,[\delta\pi],\delta h)\in\E(t_0)$
be arbitrary. We recall the well known fact that for any $v\in
C(\bar J;L_{\tilde p}(\R^{n+1}))$, $p\le\tilde p<\infty$, it holds
\begin{equation}\label{Cshift}
 \sup_{t\in J} \|v(t,T_{(h+\delta h)(t)}(\cdot))-v(t,T_{h(t)}(\cdot))\|_{L_{\tilde p}(\R^{n+1})}
 \to 0\quad \mbox{as } \|\delta h\|_{C(\bar J;BC^1(\R^n))}\to 0,
\end{equation}
which can be shown by an approximation of $v$ through a sequence of
continuous functions with compact support. Similarly, for
$v\in L_p(J;L_p(\R^{n+1}))$ one has
\begin{equation}\label{Lpshift}
 \int_J \|v(t,T_{(h+\delta h)(t)}(\cdot))-v(t,T_{h(t)}(\cdot))\|_{L_p(\R^{n+1})}^p\,dt
 \to 0\quad \mbox{as } \|\delta h\|_{C(\bar J;BC^1(\R^n))}\to 0.
\end{equation}
Consider the remainder term
\begin{align}\label{Ru}
\begin{split}
R_u(t,x,y):=&(\hat u+\delta \hat u)(t,T_{(h+\delta h)(t)}(x,y))-\hat
u(t,T_{h(t)}(x,y))\\
&-\delta \hat u(t,T_{h(t)}(x,y))+\partial_y \hat u(t,T_{h(t)}(x,y)) \delta
h(t,x).
\end{split}
\end{align}
Let $p\le\tilde p<\infty$ be arbitrary. We obtain
\begin{align*}
&\|R_u\|_{C(\bar J;L_{\tilde p}(\R^{n+1}))}
\le \sup_{t\in J}\left\|\int_0^1 (\partial_y \delta\hat u(t,T_{(h+\tau\delta
h)(t)}(\cdot))\,d\tau\, \delta h(t,\cdot)\right\|_{L_{\tilde p}(\R^{n+1})}\\
&+\sup_{t\in J}\left\|\int_0^1 (\partial_y \hat u(t,T_{(h+\tau\delta h)(t)}(\cdot))
-\partial_y \hat u(t,T_{h(t)}(\cdot)))\,d\tau\, \delta
h(t,\cdot)\right\|_{L_{\tilde p}(\R^{n+1})}
\end{align*}
\begin{align*}
&\le \|\delta\hat u\|_{C(\bar J;H^1_{\tilde p}(\R^{n+1})} \|\delta h\|_{C(\bar J;BC(\R^n))}\\
&+\sup_{t\in J,\tau\in [0,1]}
\|\partial_y \hat u(t,T_{(h+\tau\delta h)(t)}(\cdot))
-\partial_y \hat u(t,T_{h(t)}(\cdot))\|_{L_{\tilde p}(\R^{n+1})}\|\delta
h\|_{C(\bar J;BC(\R^n))}\\
&=\|\delta\hat u\|_{C(\bar J;H^1_{\tilde p}(\R^{n+1})} \|\delta h\|_{C(\bar J;BC(\R^n))}+
o(\|\delta h\|_{C(\bar J;BC(\R^n))})\\
&=o(\|\delta\hat u\|_{\E_1(t_0)}+\|\delta h\|_{\E_4(t_0)}).
\end{align*}
Here, we have used \eqref{Cshift} and the imbeddings \eqref{imbb1},
\eqref{imbc}. This shows that \eqref{udef} is Fr\'echet differentiable. The
continuity of the derivative follows from the fact that for
$(\hat u_1,\pi_1,[\pi_1],h_1)\to (\hat u,\pi,[\pi],h)$ in $\E(t_0)$ we have
\begin{align*}
&\sup_{t\in J} \|\delta \hat u(t,T_{h_1(t)}(\cdot))
-\delta \hat u(t,T_{h(t)}(\cdot))\|_{L_{\tilde p}(\R^{n+1})}\\
&=\sup_{t\in J}\left\|\int_0^1 \partial_y \delta \hat u(t,T_{(h+\tau (h_1-h))(t)}(\cdot))\,d\tau\,
(h_1-h)(t,\cdot)\right\|_{L_{\tilde p}(\R^{n+1})}\\
&\le \|\delta\hat u\|_{C(\bar J;H^1_{\tilde p}(\R^{n+1}))} \|\tilde h-h\|_{C(\bar J;BC(\R^n))}\\
&\le C \|\delta\hat u\|_{\E_1(t_0)} \|\tilde h-h\|_{\E_4(t_0)}
\end{align*}
as well as $\|\delta h\|_{C(\bar J;BC(\R^n))}\le C \|\delta h\|_{\E_4(t_0)}$ and
\begin{align*}
&\sup_{t\in J} \|\partial_y \hat u_1(t,T_{h_1(t)}(\cdot))
-\partial_y \hat u(t,T_{h(t)}(\cdot))\|_{L_{\tilde p}(\R^{n+1})}\\
&\le \|\hat u_1-\hat u\|_{C(\bar J;H_{\tilde p}^1(\R^{n+1}))}+
\sup_{t\in J} \|(\partial_y \hat u_1(t,T_{h_1(t)}(\cdot))
-\partial_y \hat u(t,T_{h(t)}(\cdot)))\|_{L_{\tilde p}(\R^{n+1})}\to 0,
\end{align*}
where we have used \eqref{Cshift}.

The continuous differentiability of \eqref{qpmdef} follows
very similarly by using \eqref{Lpshift} instead of \eqref{Cshift}
and by applying \eqref{prep}, \eqref{ppoin} and Theorem \ref{thm:embed}.

Finally, consider \eqref{upmdef}, \eqref{upm}.
Then $\hat u_\pm, \delta \hat u_\pm\in L_p(J;H^2_p(\R^{n+1},\R^{n+1}))$.
Define the remainder terms $R_{u_\pm}$ as in \eqref{Ru} with $\hat u,\hat \delta u$
replaced by $\hat u_\pm,\delta \hat u_\pm$. After differentiation
a calculation as above yields
\begin{align*}
&\|\nabla R_{u_\pm}\|_{L_p(J;L_p(\R^{n+1}))}\\
&\le \left\|\int_0^1 (\partial_y \nabla\delta\hat u_\pm(t,T_{(h+\tau\delta
h)(t)}(\cdot))^\top DT_{(h+\tau\delta h)(t)}\,d\tau\, \delta
h(t,\cdot)\right\|_{L_p(J;L_p(\R^{n+1}))}\\
&+\left\|\int_0^1 \partial_y \delta\hat u_\pm(t,T_{(h+\tau\delta
h)(t)}(\cdot))\,d\tau\, \nabla\delta
h(t,\cdot)^\top\right\|_{L_p(J;L_p(\R^{n+1}))}\\
&+\left\|\int_0^1 \nabla (\partial_y \hat u_\pm(t,T_{(h+\tau\delta
h)(t)}(\cdot)) 
-\partial_y \hat u_\pm(t,T_{h(t)}(\cdot)))^\top 
DT_{h(t)}\,d\tau\, \delta h(t,\cdot)\right\|_{L_p(J;L_p(\R^{n+1}))}\\
&+\left\|\int_0^1 \partial_y \nabla\hat u_\pm(t,T_{(h+\tau\delta h)(t)}(\cdot))^\top 
(DT_{(h+\tau\delta h)(t)}-DT_{h(t)})\,d\tau\, \delta
h(t,\cdot)\right\|_{L_p(J;L_p(\R^{n+1}))}\\
&+\left\|\int_0^1 (\partial_y \hat u_\pm(t,T_{(h+\tau\delta h)(t)}(\cdot))
-\partial_y \hat u_\pm(t,T_{h(t)}(\cdot)))\,d\tau\, \nabla\delta
h(t,\cdot)^\top\right\|_{L_p(J;L_p(\R^{n+1}))}\\
&\le \|\delta\hat u_\pm\|_{L_p(J;H^2_p(\R^{n+1})} 
(1+\|h\|_{C(\bar J;BC^1(\R^n))}+\|\delta h\|_{C(\bar J;BC^1(\R^n))})
\|\delta h\|_{C(\bar J;BC(\R^n))}\\
&+\|\delta\hat u_\pm\|_{L_p(J;H^1_p(\R^{n+1})}\|\delta
h\|_{C(\bar J;BC^1(\R^n))}\\
&+\|\partial_y \nabla\hat u_\pm(t,T_{(h+\tau\delta h)(t)}(\cdot))
-\partial_y \nabla\hat u_\pm(t,T_{h(t)}(\cdot))\|_{L_p(J;L_p(\R^{n+1}))}\\
&\cdot(1+\|h\|_{C(\bar J;BC^1(\R^n))}) \|\delta h\|_{C(\bar J;BC(\R^n))}
+\|\hat u_\pm\|_{L_p(J;H_p^2(\R^{n+1}))}\|\delta h\|_{C(\bar J;BC(\R^n))}^2\\
&+\|\partial_y \hat u_\pm(t,T_{(h+\tau\delta h)(t)}(\cdot))
-\partial_y \hat u_\pm(t,T_{h(t)}(\cdot))\|_{L_p(J;L_p(\R^{n+1}))}
\|\delta h\|_{C(\bar J;BC^1(\R^n))}\\
&=o(\|\delta\hat u\|_{L_p(J;H^2_p(\R^{n+1})}+\|\delta h\|_{C(\bar J;BC^1(\R^n))})
=o(\|\delta\hat u\|_{\E_1(t_0)}+\|\delta h\|_{\E_4(t_0)}).
\end{align*}
Here we have used \eqref{Lpshift} and the imbedding \eqref{imbc}.
The continuity of the derivative follows with very similar estimates.
\end{proof}
Similarly, we have
\begin{lemma}\label{lem:cdiff}
Let $\U_c(t_0)=L_p(J;H^1_p(\R^{n+1}))$. Then the mapping
\begin{equation}\label{hatc}
  (c,h)\in \U_c(t_0)\times \E_4(t_0)\mapsto \hat c(c,h) \in \U_{\hat c}(t_0)
\end{equation}
with $\hat c(c,h)(t,x,y)=c(t,x,y+h(t,x))$ is continuously differentiable
with derivative
\[
 (\delta c,\delta h)\in \U_c(t_0)\times \E_4(t_0)\mapsto
 \delta c(t,x,y+h(t,x))+\partial_y c(t,x,y+h(t,x)) \delta h(t,x).
\]
\end{lemma}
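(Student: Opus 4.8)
The plan is to mimic the proof of Lemma~\ref{lem:uqdiff}, since the structure is essentially the same: we have a composition of a fixed function with a shift by $h$ in the $y$-variable, and we want to differentiate with respect to both the function and the shift. The main difference is that here the ``fixed function'' is the control $c\in\U_c(t_0)=L_p(J;H^1_p(\R^{n+1}))$, the shift is $+h$ rather than $-h$, and the target space is $\U_{\hat c}(t_0)=L_p(J;L_p(\R^{n+1},\R^{n+1}))=\F_1(t_0)$, which is weaker than the domain space in the $y$-variable (only $L_p$ in space, whereas $c$ has one spatial derivative). This loss of one derivative is exactly what makes the argument work, just as $\partial_y$ of an $H^1_p$-function landing in $L_p$ made the remainder estimates close in the previous lemma.

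First I would record that $T_{h(t)}^{-1}:(x,y)\mapsto(x,y+h(t,x))$ is, by the imbedding \eqref{imbc}, a $C^2$-diffeomorphism of $\R^{n+1}$ for each $t\in\bar J$ with unit Jacobian, so $\hat c(c,h)(t,\cdot)=c(t,\cdot)\circ T_{h(t)}^{-1}$ is a well-defined element of $L_p(\R^{n+1})$ with $\|\hat c(c,h)(t,\cdot)\|_{L_p}=\|c(t,\cdot)\|_{L_p}$, hence $\hat c(c,h)\in\U_{\hat c}(t_0)$ with the map being well defined. Next I would invoke the density/approximation fact \eqref{Lpshift} (with $h$ replaced by $-h$, i.e.\ shift by $T_{(h+\delta h)(t)}^{-1}$), which gives the needed continuity of the shift operation in the $L_p(J;L_p)$-topology as $\|\delta h\|_{C(\bar J;BC^1(\R^n))}\to0$, and I would note $\|\delta h\|_{C(\bar J;BC^1(\R^n))}\le C\|\delta h\|_{\E_4(t_0)}$ by \eqref{imbc}.

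For Fr\'echet differentiability I would form the remainder
\[
R_c(t,x,y):=(c+\delta c)(t,T_{(h+\delta h)(t)}^{-1}(x,y))-c(t,T_{h(t)}^{-1}(x,y))
-\delta c(t,T_{h(t)}^{-1}(x,y))-\partial_y c(t,T_{h(t)}^{-1}(x,y))\,\delta h(t,x),
\]
split it as in \eqref{Ru} into a term $\int_0^1\partial_y\delta c(t,T_{(h+\tau\delta h)(t)}^{-1}(\cdot))\,d\tau\,\delta h(t,\cdot)$ and a term $\int_0^1(\partial_y c(t,T_{(h+\tau\delta h)(t)}^{-1}(\cdot))-\partial_y c(t,T_{h(t)}^{-1}(\cdot)))\,d\tau\,\delta h(t,\cdot)$, take the $L_p(J;L_p(\R^{n+1}))$-norm, and estimate: the first term is bounded by $\|\delta c\|_{L_p(J;H^1_p(\R^{n+1}))}\|\delta h\|_{C(\bar J;BC(\R^n))}=o(\|\delta c\|_{\U_c}+\|\delta h\|_{\E_4})$, and the second by $\|\delta h\|_{C(\bar J;BC(\R^n))}$ times $\sup_{\tau}\|\partial_y c(t,T_{(h+\tau\delta h)(t)}^{-1}(\cdot))-\partial_y c(t,T_{h(t)}^{-1}(\cdot))\|_{L_p(J;L_p)}$, which is $o(1)$ by \eqref{Lpshift} applied to $\partial_y c\in L_p(J;L_p(\R^{n+1}))$. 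Hence $R_c=o(\|\delta c\|_{\U_c}+\|\delta h\|_{\E_4})$, giving the claimed derivative. Continuity of the derivative follows by the same computation as at the end of the proof of Lemma~\ref{lem:uqdiff}: for $(c_1,h_1)\to(c,h)$ one writes $\delta c(t,T_{h_1(t)}^{-1}(\cdot))-\delta c(t,T_{h(t)}^{-1}(\cdot))=\int_0^1\partial_y\delta c(t,T_{(h+\tau(h_1-h))(t)}^{-1}(\cdot))\,d\tau\,(h_1-h)(t,\cdot)$ and $\partial_y c_1(t,T_{h_1(t)}^{-1}(\cdot))-\partial_y c(t,T_{h(t)}^{-1}(\cdot))$, estimating the former by $\|\delta c\|_{\U_c}\|h_1-h\|_{\E_4}$ and the latter via $\|c_1-c\|_{\U_c}$ plus an \eqref{Lpshift}-term, so both $\to0$.

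The only real point requiring care — the ``hard part,'' though it is mild here — is bookkeeping the direction of the shift: Lemma~\ref{lem:uqdiff} uses $T_{h(t)}:(x,y)\mapsto(x,y-h(t,x))$ and composes $\hat u$ with $T_{h(t)}$, whereas here we compose $c$ with $T_{h(t)}^{-1}:(x,y)\mapsto(x,y+h(t,x))$, so the sign of the first-order term in $\delta h$ is $+\partial_y c\cdot\delta h$ rather than $-\partial_y\hat u\cdot\delta h$; one must check that \eqref{Lpshift} applies equally to the inverse shift (it does, by the same compactly-supported approximation argument), and that the unit Jacobian of $T_{h(t)}^{-1}$ is what makes all the $L_p$-norms after change of variables behave. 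Everything else is a direct transcription of the $L_p(J;L_p)$-parts of the proof of Lemma~\ref{lem:uqdiff}, with $c$ in place of $\hat u_\pm$ or $\pi_\pm$ and one fewer spatial derivative to spend.
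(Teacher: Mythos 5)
Your argument is correct and is essentially identical to the paper's, which simply states that the proof is the same as the one for \eqref{qpmdef} in Lemma \ref{lem:uqdiff}, i.e.\ the remainder-splitting plus \eqref{Lpshift} argument you carried out explicitly (with the sign of the shift adjusted). No gaps: the unit-Jacobian change of variables and the imbedding \eqref{imbc} are exactly the ingredients the paper relies on as well.
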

\begin{proof}
The proof is the same as for \eqref{qpmdef}.
\end{proof}
For the original data $(u_0,h_0,c)$ we obtain the following
existence and differentiability result.
\begin{theorem}\label{thm:difforg}
Let $p>n+3$ and $\U_{u}(h_0), \U_{c}(t_0)$ be defined by
\eqref{Udeforg}.
Then for any $t_0>0$  there exists $\varepsilon_0=\varepsilon_0(t_0)>0$
such that for all data
\[
  (h_0,c)\in \U_h \times \U_c(t_0),\quad u_0\in \U_{u}(h_0)
\]
satisfying the compatibility condition \eqref{comp} as well as the smallness condition
\begin{equation}\label{initepsorg}
\|u_0\|_{\U_{u}(h_0)}+\|h_0\|_{\U_h}+\|c\|_{\U_c(t_0)}<
\varepsilon_0
\end{equation}
there exists a unique solution of the transformed problem \eqref{Ptrans} with
\[
 (\hat u,\pi,[\pi],h)\in \E(t_0),
\]
Moreover, for any $h_0$ with $\|h_0\|_{\U_h}<\varepsilon_0$ the mapping
\[
 \{ (u_0,c)\in \U_u(h_0)\times \U_{c}(t_0):
 (u_0,h_0,c) \mbox{ satisfy \eqref{comp}, \eqref{initepsorg}}\}
 \mapsto (\hat u,\pi,[\pi],h)\in \E(t_0)
\]
is continuously differentiable.

By the chain rule, also the original state $(u,q)$ depends continuously differentiable
on $(u_0,c)$ with the spaces given in \eqref{udef}, \eqref{upmdef},
\eqref{qpmdef}.
\end{theorem}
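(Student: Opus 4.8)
The plan is to transfer Theorem~\ref{thm:exdiff} from the transformed data $(\hat u_0,h_0,\hat c)$ to the original data $(u_0,h_0,c)$. For a fixed $h_0$ the transformation $u_0\mapsto\hat u_0$ with $\hat u_0(x,y)=u_0(x,y+h_0(x))$ is a bounded \emph{linear} operator $\Phi_{h_0}\in\mathcal{L}(\U_u(h_0),\U_{\hat u})$ by the last estimate in \eqref{orgest}, hence $C^\omega$; and under \eqref{trafo} the condition \eqref{comp} is equivalent to \eqref{comptrans}, so $\Phi_{h_0}$ maps $\{u_0:\text{\eqref{comp}}\}$ bijectively onto $\{\hat u_0:\text{\eqref{comptrans}}\}$, a set that for fixed $h_0$ is cut out by constraints linear in $u_0$. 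The genuinely new feature compared with Theorem~\ref{thm:exdiff} is that the transformed control is $\hat c=\hat c(c,h)$ with $\hat c(c,h)(t,x,y)=c(t,x,y+h(t,x))$ and hence depends on the solution component $h$. Since the shear $(x,y)\mapsto(x,y+h(t,x))$ has unit Jacobian, $\|\hat c(c,h)\|_{\U_{\hat c}(t_0)}=\|c\|_{L_p(J;L_p(\R^{n+1}))}\le\|c\|_{\U_c(t_0)}$ uniformly in $h\in\E_4(t_0)$, and by Lemma~\ref{lem:cdiff} the map $(c,h)\mapsto\hat c(c,h)$ is $C^1$ from $\U_c(t_0)\times\E_4(t_0)$ into $\U_{\hat c}(t_0)$; because $c$ only has $H^1_p$-regularity in space this is the sole point where analyticity is lost, which is why only continuous differentiability can be claimed.

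I would then repeat the fixed-point construction from the proof of Theorem~\ref{thm:exdiff}, now with $\hat c(c,h)$ incorporated into the nonlinearity. Exactly as there, build $z^*=z^*(\hat u_0,h_0)\in\E(t_0)$ resolving the compatibility conditions \eqref{complin1}, \eqref{complin2} (the control enters none of these, since $\F_1(t_0)$ admits no trace at $t=0$); then $u_0\mapsto z^*(\Phi_{h_0}(u_0),h_0)$ is $C^\omega$ with vanishing derivative at $0$. Writing $z=\tilde z+z^*$ with $\tilde z\in{}_0\E(t_0)$ and letting $h$ denote the $\E_4$-component of $\tilde z+z^*$, problem \eqref{Ptrans} with $\hat c=\hat c(c,h)$ and $\hat u_0=\Phi_{h_0}(u_0)$ becomes
\[
 L_0\tilde z=N\bigl(\tilde z+z^*(\Phi_{h_0}(u_0),h_0)\bigr)+(\hat c(c,h),0)-Lz^*(\Phi_{h_0}(u_0),h_0)=:\tilde K(\tilde z;u_0,h_0,c),
\]
with right-hand side in ${}_0\F(t_0)$. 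By Lemma~\ref{lem:N}, Lemma~\ref{lem:cdiff}, the boundedness of $\Phi_{h_0}$ and the analyticity of $z^*$, the map $\tilde K$ is $C^1$ with $\tilde K(0)=0$ and $D_{(\tilde z,u_0,h_0)}\tilde K(0)=0$ (using $Dz^*(0,0)=0$ and $D_h\hat c(0,\cdot)=0$). Hence, with $M=\|L_0^{-1}\|_{\mathcal{L}({}_0\F(t_0),{}_0\E(t_0))}$, the estimates of the proof of Theorem~\ref{thm:exdiff} carry over: for $\varepsilon_0$ small enough and $\delta$ proportional to $\varepsilon_0$, $\tilde K$ has on $B_Z(\delta)\times B_U(\varepsilon_0)$ a Lipschitz constant $L_z=1/(2M)$ in $\tilde z$ and a bounded Lipschitz constant in $(u_0,h_0,c)$, and $L_0^{-1}\tilde K$ is a self-map of $B_Z(\delta)$; Theorem~\ref{thm:fix}\,a) then yields a unique $\tilde z=\tilde z(u_0,h_0,c)\in B_Z(\delta)$, hence a unique $z=(\hat u,\pi,[\pi],h)=\tilde z+z^*\in\E(t_0)$.

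For the differentiability I would invoke Theorem~\ref{thm:fix}\,b): for fixed $h_0$ the admissible set $B_U$ is relatively open in the closed linear subspace of $\U_u(h_0)\times\U_c(t_0)$ determined by the (now linear) compatibility constraint on $u_0$, and $\tilde K$ is $C^1$, so $(u_0,c)\mapsto\tilde z(u_0,h_0,c)$ is Fr\'echet differentiable with derivative solving \eqref{delz}. This derivative is moreover continuous, since $D\tilde K$ is continuous, $(u_0,c)\mapsto\tilde z$ is Lipschitz continuous, and $(L_0-D_{\tilde z}\tilde K)^{-1}$ depends continuously on $D_{\tilde z}\tilde K$ via a Neumann series (valid as $ML_z<1$); adding the $C^\omega$ term $z^*(\Phi_{h_0}(\cdot),h_0)$ shows that $(u_0,c)\mapsto(\hat u,\pi,[\pi],h)$ is continuously differentiable. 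The last assertion then follows by the chain rule: Lemma~\ref{lem:uqdiff} gives that $(\hat u,\pi,[\pi],h)\mapsto u$ and $(\hat u,\pi,[\pi],h)\mapsto u_\pm,q_\pm$ into the spaces of \eqref{udef}, \eqref{upmdef}, \eqref{qpmdef} are $C^1$, so their composition with the control-to-state map just obtained is $C^1$.

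The step I expect to be the main obstacle is closing the fixed-point argument once $\hat c(c,h)$ has been folded into the nonlinearity: one must verify that this extra term leaves the self-mapping and contraction estimates of Theorem~\ref{thm:exdiff} intact --- which rests on the uniform bound $\|\hat c(c,h)\|_{\U_{\hat c}(t_0)}\le\|c\|_{\U_c(t_0)}$ and on $D_h\hat c(c,\cdot)$ being small for small $c$ --- and one must keep track that the only loss of regularity is the $C^1$ map $c\mapsto\hat c$, so that the conclusion is precisely continuous differentiability.
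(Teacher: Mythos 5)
Your proposal is correct and follows essentially the same route as the paper: it modifies the fixed-point equation \eqref{FP3} to \eqref{FP3mod} with $\hat c(c,h)$ depending on the solution component $h$, uses the norm identity $\|\hat c(c,h)\|_{\U_{\hat c}(t_0)}=\|c\|_{L_p(J;L_p)}$ and the last estimate in \eqref{orgest} to keep the self-map and smallness conditions, exploits Lemma \ref{lem:cdiff} and the bound of the $h$-Lipschitz constant by $\|c\|_{\U_c(t_0)}<\varepsilon_0$ to preserve the contraction, applies Theorem \ref{thm:fix}, and concludes via Lemma \ref{lem:uqdiff} and the chain rule. Your additional Neumann-series argument for continuity of the derivative merely makes explicit a detail the paper leaves to "as in the proof of Theorem \ref{thm:exdiff}" and does not change the approach.
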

\begin{proof}
We adapt the fixed point argument in the proof of Theorem \ref{thm:exdiff}.
Let
\begin{equation}\label{hatcdef}
\hat c(c,h)(t,x,y)=c(t,x,y+h(t,x)).
\end{equation}
The only difference compared to
the situation in Theorem \ref{thm:exdiff} results from the fact that
$\hat c(c,h)$ depends now on $h$. Hence, the fixed point equation
\eqref{FP3} changes to
\begin{equation}\label{FP3mod}
  L_0 \tilde z=K(\tilde z;\hat u_0,h_0,\hat c(c,\tilde z+z^*(\hat u_0,h_0))),~~ \tilde z\in {}_0\E(t_0).
\end{equation}

Let $\hat\varepsilon_0>0$ be as in Theorem \ref{thm:exdiff}.
We have
\begin{equation}\label{cnorm}
\|\hat c(c,h)\|_{\U_{\hat u}}=\|c\|_{\U_{\hat u}}
\end{equation}
and the last estimate in
\eqref{orgest} shows that for $\varepsilon_0>0$ small enough
\eqref{initepsorg} implies \eqref{initeps}. 

Hence, for all $(u_0,h_0,c)$ satisfying \eqref{initepsorg}
we have $(\hat u_0,h_0,\hat c(c,h))\in B_U(\hat\varepsilon_0)$
(note that \eqref{cnorm} holds independently of $h$) and thus by
\eqref{self}
\begin{align*}
& \|L_0^{-1} K(\tilde z;\hat u_0,h_0,\hat c)\|_{{}_0\E(t_0)}<\delta.
\end{align*}
Finally, the Lipschitz constant of $K(\tilde z;\hat u_0,h_0,\hat c)$
with respect to $\hat c$ is $1$ and the mapping \eqref{hatc}, \eqref{hatcdef}
is by Lemma \ref{lem:cdiff} continuously differentiable and the Lipschitz constant
with respect to $h$ is bounded by $\|c\|_{\U_c(t_0)}<\varepsilon_0$. Hence, for
$\varepsilon_0>0$ small enough, \eqref{FP3mod} is a contraction
and the existence, uniqueness and continuous differentiability follow as
in the proof of Theorem \ref{thm:exdiff}.

Lemma \ref{lem:uqdiff} and the chain rule yield now the continuous
differentiability of the original state $(u,q)$ with respect to
$(u_0,c)$ for the spaces given in \eqref{udef}, \eqref{upmdef},
\eqref{qpmdef}.
\end{proof}

\subsection{Volume-of-Fluid type formulation}\label{sec:33}
Our aim is finally to derive a Volume-of-Fluid (VoF) type formulation with
corresponding sensitivity equation that is satisfied by the solution
$(u,q)$ of the problem \eqref{P} and its sensitivities $(\delta u,\delta q)$.
This provides an analytical foundation to derive and analyze appropriate
numerical VoF schemes for sensitivity calculations.

Let $\alpha:\R^{n+1}\to [0,1]$ be a phase indicator satisfying the
transport equation
\begin{align}
\label{transp}
\partial_t \alpha+u\cdot\nabla\alpha=0&\quad \mbox{in $J\times
\R^{n+1}$,}\quad
\alpha(0)=1_{\Omega_1(0)}\quad \mbox{on $\R^{n+1}$}.
\end{align}
We note that for $u\in L_1(J;W^1_\infty(\R^{n+1};\R^{n+1}))$ with
$\Div u=0$ a.e. any distributional solution $\alpha\in L_1(J;L_{1,loc}(\R^{n+1}))$
is also a distributional solution of
\begin{equation}
\label{transpc}
\partial_t \alpha+\Div(u \alpha)=0\quad \mbox{in $J\times \R^{n+1}$},
\quad
\alpha(0)=1_{\Omega_1(0)}\quad \mbox{on $\R^{n+1}$}.
\end{equation}
We define now
\[
 \rho(\alpha)=\alpha\rho_1+(1-\alpha)\rho_2,\quad
 \mu(\alpha)=\alpha\mu_1+(1-\alpha)\mu_2.
\]
We will show that the unique solution $(u,q)$ of \eqref{P} according to Theorem \ref{thm:difforg}
satisfies the VoF-type formulation
\begin{align}\notag
&\int_{\R^{n+1}} \bigl(\partial_t (\rho(\alpha) u)+\Div(\rho(\alpha) u\otimes
u))(t,x,y)^\top \varphi(x,y)\\
&\qquad\qquad+S(u,q;\mu(\alpha))(t,x,y):\nabla\varphi(x,y)\bigr)\,d(x,y)
\qquad\forall\,\varphi\in C_c^1(\R^{n+1};\R^{n+1})\label{weakVoF1}\\\notag
&=-\lim_{\varepsilon\searrow 0}\int_{\R^{n+1}} \sigma
\frac{\nu_{\varepsilon}(t,x,y)^\top}{|\nu_{\varepsilon}(t,x,y)|}
 (D\varphi-\Div(\varphi)I)(x,y)
\nabla\alpha(t,x,y)\,d(x,y),\\\label{weakVoF2}
&\int_{\R^n} \Div(u)\, \psi\,dx=0\quad\forall\,\psi\in C_c^1(\R^{n+1}),\\
&\alpha \mbox{ satisfies \eqref{transpc}},\label{weakVoF3}
\end{align}
where $\nu_{\varepsilon}$ is a suitable smoothed normal computed from
$\nabla\alpha$, see \eqref{nueps} below.

In order to deal with the sensitivity equation, it will be beneficial to
consider measure-valued solutions of the general equation
\begin{equation}
\label{transpcmu}
\partial_t \delta\alpha+\Div(u \,\delta\alpha)=b\quad \mbox{in $J\times \R^{n+1}$},\quad
 \delta\alpha(0)=\delta\alpha_0\quad \mbox{on $\R^{n+1}$}.
\end{equation}
For $u\in L_1(J;W^1_\infty(\R^{n+1};\R^{n+1}))$ we can define uniquely
the continuous mapping $(x,y)\mapsto X(t;x,y)$, where $X(t;x,y)$ satisfies the
characteristic equation
\begin{equation}\label{char}
 \partial_t X(t;x,y)=
 u(t,X(t;x,y)),~~ t\in J,\quad X(0;x,y)=(x,y).
\end{equation}
In the following, we denote by ${\mathcal M}_{loc}(\R^{n+1})$ the space of
locally bounded Radon measures.
\begin{proposition}\label{prop:MV}
Let $u\in L_1(J;W^1_\infty(\R^{n+1};\R^{n+1}))$. Then for any
$\delta\alpha_0\in {\mathcal M}_{loc}(\R^{n+1})$ there exists a unique
distributional solution of \eqref{transpcmu} in
$C(\bar J;{\mathcal M}_{loc}(\R^{n+1})-\mbox{weak$^*$})$, given by
\begin{equation}\label{transpi1}
 \delta\alpha(t)=X(t)(\delta\alpha_0)+\int_0^t X(t-s)(b(s)).
\end{equation}
Here, $X$ is the forward flow defined by \eqref{char} and
$\delta\alpha_t=X(t)(\delta\alpha_0)$ is the measure satisfying
\[
  \int_{\R^{n+1}} \phi(x,y) \,d\delta \alpha_t(x,y)=
  \int_{\R^{n+1}} \phi(X(t;x,y))\,d\delta\alpha_0(x,y)\quad\forall\,\phi\in
C_c(\R^{n+1}).
\]
\end{proposition}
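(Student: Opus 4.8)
The plan is to obtain existence by verifying that the explicit formula \eqref{transpi1} defines a weak-$*$ continuous curve of locally bounded measures solving \eqref{transpcmu}, and then to prove uniqueness by duality against the backward transport equation. Here $b$ is understood to be such that the Duhamel term is well defined (e.g.\ $b\in L_1(J;\mathcal{M}_{loc}(\R^{n+1}))$), $X(t,s)$ denotes the flow of $u$ over the time interval with endpoints $s$ and $t$, and we abbreviate $X(t):=X(t,0)$, so that the integrand in \eqref{transpi1} is $X(t,s)(b(s))$. First I would record the flow properties: since $u\in L_1(J;W^1_\infty(\R^{n+1};\R^{n+1}))$, Carath\'eodory's theorem and Gr\"onwall's inequality give, for each starting time $s$ and point $(x,y)$, a unique absolutely continuous solution of \eqref{char} started at time $s$; the maps $X(t,s)$ and their inverses are globally Lipschitz with constants bounded by $\exp(\int_0^{t_0}\|u(\tau)\|_{W^1_\infty}\,d\tau)$, the map $(t,s,x,y)\mapsto X(t,s;x,y)$ is continuous, and $t\mapsto X(t,s;x,y)$ is absolutely continuous with a.e.\ derivative $u(t,X(t,s;x,y))$. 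No DiPerna--Lions theory is needed because $u$ is spatially Lipschitz.

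Next I would show that \eqref{transpi1} is a well-defined element of $C(\bar J;\mathcal{M}_{loc}(\R^{n+1})-\mbox{weak$^*$})$. For a locally bounded Radon measure $\mu_0$ and a bi-Lipschitz (hence proper) map $\Phi$, the pushforward $\Phi(\mu_0)$ is locally bounded with $|\Phi(\mu_0)|(K)=|\mu_0|(\Phi^{-1}(K))$ for compact $K$; applied with $\Phi=X(t)$ and $\Phi=X(t,s)$ this makes each summand of \eqref{transpi1} a locally bounded measure whose total variation on a fixed compact set is controlled uniformly in $t$ (resp.\ $s,t$). Weak-$*$ continuity in $t$ of $X(t)(\delta\alpha_0)$ follows from continuity of the flow and dominated convergence (for $\phi\in C_c$ the functions $\phi\circ X(t)$ are supported in a fixed compact set for $t$ in a neighbourhood and converge uniformly), and Fubini together with the local total-variation bounds shows the Duhamel integral is likewise weak-$*$ continuous.

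To verify the equation I would fix a test function $\varphi\in C_c^1([0,t_0)\times\R^{n+1})$ and differentiate under the integral sign (legitimate by $\varphi\in C^1$, the Lipschitz bound on the flow, and absolute continuity in $t$): for any measure $\mu_0$,
\[
\frac{d}{dt}\int_{\R^{n+1}}\varphi(t,X(t;x,y))\,d\mu_0(x,y)
=\int_{\R^{n+1}}\bigl(\partial_t\varphi+u(t)\cdot\nabla\varphi\bigr)(t,X(t;x,y))\,d\mu_0(x,y),
\]
i.e.\ $\frac{d}{dt}\langle X(t)(\mu_0),\varphi(t)\rangle=\langle X(t)(\mu_0),(\partial_t+u\cdot\nabla)\varphi(t)\rangle$. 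Integrating over $\bar J$ shows $t\mapsto X(t)(\mu_0)$ is a distributional solution of the homogeneous equation with initial datum $\mu_0$; the analogous identity for $X(t,s)$ combined with Fubini in $(s,t)$ shows $t\mapsto\int_0^t X(t,s)(b(s))\,ds$ solves \eqref{transpcmu} with zero initial datum and source $b$. Summing gives that \eqref{transpi1} solves \eqref{transpcmu}.

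For uniqueness, by linearity it suffices to show that $w\in C(\bar J;\mathcal{M}_{loc}-\mbox{weak$^*$})$ with $w(0)=0$ and $\partial_t w+\Div(uw)=0$ distributionally must vanish. Fixing $\tau\in(0,t_0]$ and $\eta\in C_c^1(\R^{n+1})$, the function $\psi(t,\cdot):=\eta\circ X(\tau,t;\cdot)$ solves the backward problem $\partial_t\psi+u\cdot\nabla\psi=0$ on $[0,\tau]$ with $\psi(\tau,\cdot)=\eta$; it is Lipschitz in space with support in a fixed compact set, absolutely continuous in $t$, and $\partial_t\psi=-u\cdot\nabla\psi\in L_1([0,\tau];L_\infty)$ with compact spatial support. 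Using $\psi$ as a test function (after mollification in time and a cut-off near $t=\tau$, passing to the limit via the weak-$*$ continuity of $w$) yields $\langle w(\tau),\eta\rangle=\langle w(0),\psi(0,\cdot)\rangle=0$ for all $\eta$, hence $w\equiv0$. I expect the main obstacle to be exactly this last step: $\psi$ is only Lipschitz in space and absolutely continuous in time — its gradient is $L_\infty$ but not continuous, since $u$ is only $L_1$ in time — so it is not literally a $C^1$ test function, and one must justify the approximation carefully, controlling $\int_0^\tau\langle w(t),(\partial_t+u\cdot\nabla)\psi_\varepsilon(t)\rangle\,dt$ using only the $L_1$-in-time bounds on $u$ and the weak-$*$ continuity of $w$; the remaining steps are routine once the flow estimates of the first part are in hand.
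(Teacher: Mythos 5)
Your proposal is a self-contained proof along the same conceptual lines as the paper's source: the paper itself only cites \cite[Thm.\ 3.1, 3.3]{PoupaudRascle} (stated for $u\in L_1(J;C^1)$) and remarks that uniqueness and stability of the characteristics let the argument carry over to $u\in L_1(J;W^1_\infty)$, with a pointer to \cite{AmbrosioBook}. What you do differently is to redo the push-forward/duality argument from scratch. The existence half is correct and complete: the Carath\'eodory--Gr\"onwall flow estimates, the local total-variation bounds for the push-forwards, the weak-$*$ continuity in $t$, and the verification of the equation by differentiating $t\mapsto\langle X(t)\mu_0,\varphi(t)\rangle$ are all sound. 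Your reading of the integrand in \eqref{transpi1} as the two-parameter flow $X(t,s)(b(s))$ is the right interpretation of the paper's $X(t-s)$ for a non-autonomous field, and the standing assumption $b\in L_1(J;{\mathcal M}_{loc}(\R^{n+1}))$ is a sensible way to make the Duhamel term meaningful, which the proposition leaves implicit.

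The one step that needs repair is exactly the one you flag in the uniqueness part, and your proposed remedy (mollification in time plus a cut-off near $t=\tau$) does not suffice: $\nabla_x\psi$ for $\psi(t,\cdot)=\eta\circ X(\tau,t;\cdot)$ is only $L_\infty$, defined Lebesgue-a.e., so $\langle w(t),u\cdot\nabla\psi\rangle$ is not even well defined if $w(t)$ has a singular part, and time-mollification does not change this. The gap is closable by a standard estimate precisely because $u$ is spatially Lipschitz (you are not in the DiPerna--Lions regime): mollify also in space, $\psi_\delta=\psi\ast\rho_\delta$, and use
\[
(\partial_t+u\cdot\nabla)\psi_\delta(t,x)=\int \bigl(u(t,x)-u(t,y)\bigr)\cdot\nabla\psi(t,y)\,\rho_\delta(x-y)\,dy ,
\]
which is bounded uniformly in $x$ by $\delta\,\|\nabla u(t)\|_{L_\infty}\,\mathrm{Lip}_x(\psi)$, with $\mathrm{Lip}_x(\psi)\le \mathrm{Lip}(\eta)\exp(\int_J\|\nabla u\|_{L_\infty}dt)$. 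Since $t\mapsto\|\nabla u(t)\|_{L_\infty}\in L_1(J)$ and $\sup_{t\in\bar J}|w(t)|(K)<\infty$ on compacts (weak-$*$ continuity plus uniform boundedness), the error $\int_0^\tau\langle w(t),(\partial_t+u\cdot\nabla)\psi_\delta(t)\rangle\,dt$ is $O(\delta)$ and vanishes, giving $\langle w(\tau),\eta\rangle=0$; the time-regularization and cut-off then only handle routine issues. With this commutator estimate supplied your argument is complete (and more informative than the paper's citation); alternatively one can simply quote \cite{PoupaudRascle} as the paper does.
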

\begin{proof}
For $u\in L_1(J;C^1(\R^{n+1};\R^{n+1}))$, see \cite[Thm. 3.1 and
3.3]{PoupaudRascle}. Since the characteristics are unique and stable also
for $u\in L_1(J;W^1_\infty(\R^{n+1};\R^{n+1}))$, the proofs directly extend to this
case, see also \cite{AmbrosioBook}. 
\end{proof}

\begin{proposition}\label{prop:alpha}
If $\hat u\in \E_1(t_0)$, $[\hat u]=0$ and $u$ is given by \eqref{trafoi}
then \eqref{transp} as well as \eqref{transpc} have a unique solution given by
\begin{equation}\label{alsol}
   \alpha(t,X(t;x,y))=1_{\Omega_1(0)}(x,y)
\end{equation}
and thus $\alpha(t,\cdot)=1_{\Omega_1(t)}$.

Moreover, for $\varepsilon_0$ from Theorem \ref{thm:difforg}
and any $h_0$ with $\|h_0\|_{\U_h}<\varepsilon_0$ the mapping
\[
 \{ (u_0,c)\in \U_u(h_0)\times \U_{c}(t_0):
 (u_0,h_0,c) \mbox{ satisfy \eqref{comp}, \eqref{initepsorg}}\}
 \mapsto \alpha\in C(\bar J;{\mathcal M}_{loc}(\R^{n+1})-\mbox{weak$^*$})
\]
is continuously differentiable. The derivative
\[
 (\delta u_0,\delta c)\in \U_u(h_0)\times \U_{c}(t_0)
 \mapsto  \delta\alpha \in C(\bar J;{\mathcal M}_{loc}(\R^{n+1})-\mbox{weak$^*$})
\]
is given by the unique measure-valued solution of
\begin{equation}
\label{transpclin}
\partial_t \delta\alpha+\Div(u\, \delta\alpha)=-\Div(\delta u\, \alpha)\quad \mbox{in $J\times \R^{n+1}$},
\quad
\delta\alpha(0)=0\quad \mbox{on $\R^{n+1}$}.
\end{equation}
Finally, $\delta\alpha$ satisfies
\begin{equation}\label{delalpha}
  \int_{\R^{n+1}} \phi(x,y)\,d\delta\alpha(t)(x,y)=\int_{\R^n} \phi(x,h(t,x)) \delta h(t,x)\,dx.
\end{equation}
\end{proposition}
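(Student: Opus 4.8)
The plan is to combine the explicit flow representation from Proposition \ref{prop:MV} with the structure of the transformed problem. First I would verify that the regularity of $u$ suffices: by Lemma stating the embeddings, $\hat u\in\E_1(t_0)\hookrightarrow C(\bar J;BUC^1(\dot\R^{n+1}))\cap C(\bar J;BUC(\R^{n+1}))$ and $h\in\E_4(t_0)\hookrightarrow C^1(\bar J;BC^1(\R^n))$, so the transformation $T_{h(t)}$ is a $C^1$-diffeomorphism in $(x,y)$ with unit Jacobian, and $u(t,\cdot)=\hat u(t,T_{h(t)}(\cdot))\in BUC^1(\R^{n+1}\setminus\Gamma(t))$. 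Since $[\hat u]=0$, $u(t,\cdot)$ is in fact globally Lipschitz, hence $u\in L_1(J;W^1_\infty(\R^{n+1};\R^{n+1}))$ (indeed in $C(\bar J;W^1_\infty)$), so Proposition \ref{prop:MV} and the characteristic flow \eqref{char} apply. Moreover $\Div u=0$ a.e.\ (this follows from $\Div\hat u=F_d(\hat u,h)$ together with the change of variables, exactly as in the derivation of \eqref{Ptrans}), so distributional solutions of \eqref{transp} and \eqref{transpc} coincide as noted after \eqref{transpc}. Uniqueness and the representation \eqref{alsol} then follow from the $b=0$, $\delta\alpha_0=1_{\Omega_1(0)}$ case of Proposition \ref{prop:MV}: $\alpha(t)=X(t)(1_{\Omega_1(0)})$ as a measure, which by the measure-preserving property is the indicator $1_{X(t;\Omega_1(0))}$. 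The key geometric identity to check is $X(t;\Omega_1(0))=\Omega_1(t)$; this comes from the kinematic boundary condition $V=u^\top\nu$ in \eqref{P} (equivalently $\partial_t h-\gamma w = H(\hat u,h)$ in the transformed system), which says exactly that the graph $\{y=h(t,x)\}$ is transported by the flow, so characteristics starting in $\{y<h_0(x)\}$ stay in $\{y<h(t,x)\}$.

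For the differentiability statement, I would argue by the chain rule. Theorem \ref{thm:difforg} gives that $(u_0,c)\mapsto(\hat u,\pi,[\pi],h)\in\E(t_0)$ is continuously differentiable, and Lemma \ref{lem:uqdiff} gives continuous differentiability of $(\hat u,\pi,[\pi],h)\mapsto u\in C(\bar J;L_{\tilde p}(\R^{n+1}))$ for every $\tilde p\in[p,\infty)$. It remains to show that the solution map $u\mapsto\alpha$ of the linear transport equation \eqref{transpc}, from a suitable neighborhood in $C(\bar J;W^1_\infty)$ (or a stronger space through which $\E_1(t_0)$ factors) into $C(\bar J;{\mathcal M}_{loc}-\text{weak}^*)$, is continuously differentiable with derivative given by the linearized equation \eqref{transpclin}. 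The natural route: use the explicit representation \eqref{transpi1}. The flow map $u\mapsto X(\cdot\,;\cdot)$ depends on $u$ in a $C^1$ fashion (standard ODE sensitivity: $\delta X$ solves the variational equation $\partial_t\delta X=Du(t,X)\delta X+\delta u(t,X)$), and pushing forward a fixed measure $\delta\alpha_0$ along $X(t)$ is linear and continuous in the weak$^*$ topology once one has pointwise convergence of the flow with locally uniform control. Differentiating $\alpha(t)=X(t)(1_{\Omega_1(0)})$ in $u$ and integrating by parts shows the derivative $\delta\alpha$ satisfies \eqref{transpclin}; uniqueness of that measure-valued solution is again Proposition \ref{prop:MV} applied with $b=-\Div(\delta u\,\alpha)\in L_1(J;{\mathcal M}_{loc})$ and $\delta\alpha_0=0$. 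Composing with the $C^1$ maps from Theorem \ref{thm:difforg} and Lemma \ref{lem:uqdiff} yields continuous differentiability of $(u_0,c)\mapsto\alpha$ and of $(\delta u_0,\delta c)\mapsto\delta\alpha$.

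For the final formula \eqref{delalpha}, the cleanest approach is to compute $\delta\alpha$ directly from the pulled-back picture rather than from \eqref{transpclin}. In the transformed coordinates the phase is frozen: $\hat\alpha(t,x,y):=\alpha(t,T_{h(t)}^{-1}(x,y))=\alpha(t,x,y+h(t,x))=1_{\R^{n+1}_-}(x,y)$, independently of $(u_0,c)$. Hence, writing $\alpha(t,x,y)=1_{\{y<h(t,x)\}}$, a variation $\delta h$ of the height moves the jump set, and for a test function $\phi\in C_c(\R^{n+1})$,
\begin{align*}
\frac{d}{d\tau}\Big|_{\tau=0}\int_{\R^{n+1}}\phi(x,y)\,1_{\{y<h(t,x)+\tau\delta h(t,x)\}}\,d(x,y)
=\int_{\R^n}\phi(x,h(t,x))\,\delta h(t,x)\,dx,
\end{align*}
which is exactly \eqref{delalpha}, with $\delta h$ the $\E_4(t_0)$-derivative of $h$ with respect to $(u_0,c)$ from Theorem \ref{thm:difforg}. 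To make this rigorous one differentiates $\tau\mapsto\int\phi(x,y)1_{\{y<h(t,x)+\tau\delta h(t,x)\}}\,d(x,y)=\int_{\R^n}\int_{-\infty}^{h(t,x)+\tau\delta h(t,x)}\phi(x,y)\,dy\,dx$ under the integral sign using $\phi\in C_c$, $h,\delta h\in C(\bar J;BC^1)$, and dominated convergence; one then checks that the resulting measure $\delta h(t,\cdot)\,dx$ restricted to the graph $\Gamma(t)$ indeed solves \eqref{transpclin}, which follows by differentiating the identity $\alpha(t,x,h(t,x)+y)=1_{\{y<0\}}$ in the control (the $\delta\hat u,\delta\pi$ terms drop because $\hat\alpha$ is control-independent) and transforming back, or alternatively by verifying directly that $t\mapsto\delta h(t,\cdot)\,\mathcal H^n\lfloor\Gamma(t)$, expressed as a measure on $\R^{n+1}$ via the graph parametrization, is the pushforward/Duhamel expression \eqref{transpi1} for \eqref{transpclin}. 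I expect the main obstacle to be the last point — reconciling the ``flat-coordinate'' formula \eqref{delalpha} with the measure-valued Duhamel representation of \eqref{transpclin}, i.e.\ showing that the two a priori different-looking objects coincide — since this requires careful bookkeeping of the change of variables between $X(t)$-pushforwards on $\R^{n+1}$ and the graph parametrization, together with the kinematic relation $\partial_t h-\gamma w=H(\hat u,h)$ linking $u$, $h$, and the flow; the transport-theory inputs themselves are routine given Proposition \ref{prop:MV}.
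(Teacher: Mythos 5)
The first half of your plan coincides with the paper's argument: the regularity $u\in C(\bar J;W^1_\infty(\R^{n+1};\R^{n+1}))$ from the embeddings, the passage between \eqref{transp} and \eqref{transpc} via $\Div u=0$, the representation along characteristics and the identification $X(t;\Omega_1(0))=\Omega_1(t)$ are all as in the paper (which quotes DiPerna--Lions/Ambrosio for \eqref{transp} and Proposition \ref{prop:MV} for \eqref{transpc}). Likewise, your ``pulled-back picture'' computation, i.e.\ writing $\alpha^s-\alpha$ as the integral over the thin region between the graphs of $h$ and $h^s$ and differentiating $\tau\mapsto\int_{\R^n}\int_{-\infty}^{h+\tau\delta h}\phi\,dy\,dx$, is exactly how the paper obtains \eqref{delalpha}, using that $h$ is differentiable with respect to $(u_0,c)$ into $\E_4(t_0)\hookrightarrow C(\bar J;BC^1(\R^n))$.

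The genuine gap is the step you yourself flag as the main obstacle: showing that the limit measure given by \eqref{delalpha} solves \eqref{transpclin}. Your primary route (chain rule through the flow map, $\delta X$ solving the variational equation) does not go through with the available regularity: by Lemma \ref{lem:uqdiff} the map $(u_0,c)\mapsto u$ is differentiable only into $C(\bar J;L_{\tilde p}(\R^{n+1};\R^{n+1}))$, and $\frac{u^s-u}{s}$ does \emph{not} converge in any Lipschitz-type norm (the normal derivative of $u$ jumps across the moving interface $\Gamma(t)$), so the classical ODE sensitivity of $u\mapsto X$ is not justified; moreover, differentiating the pushforward of the discontinuous density $1_{\Omega_1(0)}$ yields a priori only the first-order distribution $\phi\mapsto\int_{\Omega_1(0)}\nabla\phi(X(t;\cdot))^\top\delta X(t;\cdot)\,d(x,y)$, and identifying this with the surface measure in \eqref{delalpha} needs an extra integration-by-parts/volume-preservation argument that you do not supply; your alternative (differentiating the identity $\alpha(t,x,h(t,x)+y)=1_{\{y<0\}}$ ``in the control'') is only heuristic. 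The paper closes this step by a soft argument you do not mention: since $\alpha^s$ and $\alpha$ are both distributional solutions of \eqref{transpc}, one subtracts the two weak formulations, divides by $s$, obtaining $0=\int_J\int_{\R^{n+1}}-\bigl((\partial_t\varphi+u\cdot\nabla\varphi)\frac{\alpha^s-\alpha}{s}+\alpha^s(\frac{u^s-u}{s}\cdot\nabla)\varphi\bigr)$, and passes to the limit using only $\frac{\alpha^s-\alpha}{s}\to\delta\alpha$ weak$^*$ (already known from \eqref{delalpha}), $\alpha^s\to\alpha$ in $L_{2,loc}$, and $\frac{u^s-u}{s}\to\delta u$ in $C(\bar J;L_p)$ --- exactly the topologies Theorem \ref{thm:difforg} provides; uniqueness of the measure-valued solution then follows from Proposition \ref{prop:MV}. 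Adding this limit argument in the weak formulation (in place of the flow-differentiation track) is what is needed to make your plan complete.
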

\begin{proof}
If $\hat u\in \E_1(t_0)$, $[\hat u]=0$ and $u$ is given by \eqref{trafoi}
then $u\in C(\bar J;W^1_\infty(\R^{n+1};\R^{n+1}))$ by \eqref{imba},
\eqref{imbc}. Now it is well known that
\eqref{alsol} provides the unique weak solution of \eqref{transp} in
$L_{1,loc}(J\times\R^{n+1})$, see \cite[Prop. 2.2]{AmbrosioBook}
and \cite[Cor. II.1]{DiPernaLions}.
Since $\Div(u)=0$ a.e., it is also a distributional solution of
\eqref{transpc}, which is unique by Proposition \ref{prop:MV}.

Let now $(u_0,h_0,c), (\delta u_0,0,\delta c)\in \U_u\times \U_h\times \U_c(h_0)$
be such that $(u_0,h_0,c)$ and $(u_0,h_0,c)+(\delta u_0,0,\delta c)$
satisfy the conditions of Theorem \ref{thm:difforg}.
Denote by $(\hat u,\pi,[\pi],h)$ the unique solution of
\eqref{Ptrans} for data $(u_0,h_0,c)$ and by
$(\hat u^s,\pi^s,[\pi^s],h^s)$ the one
for data $(u_0,h_0,c)+s\,(\delta u_0,0,\delta c)$. Let
$(u,q)$ and $(u^s,q^s)$ be the corresponding states
in physical coordinates according to \eqref{trafo} and let
$\alpha=1_{\Omega_1(t)}, \alpha^s=1_{\Omega_1^s(t)}$ be the corresponding
solutions of \eqref{transpc}. Finally, let
$(\delta u,\delta h,\delta q)$ be the directional derivatives
(sensitivities) in direction $(\delta u_0,0,\delta c)$ which exist by
Theorem \ref{thm:difforg}. We show that
\begin{equation}\label{eq:dal}
 \frac{\alpha^s-\alpha}{s} \to \delta\alpha
\quad\mbox{ in $C(\bar J;{\mathcal M}_{loc}(\R^{n+1})-\mbox{weak$^*$})$ as $s\to 0$},
\end{equation}
where $\delta\alpha$ solves \eqref{transpclin}.
Let $\phi\in C_c(\R^{n+1})$ be arbitrary. Then
\begin{align*}
 \int_{\R^{n+1}} \frac{\alpha^s-\alpha}{s}(t,x,y)\phi(x,y)\,d(x,y)&=
 \int_{\R^n} \int_{h(t,x)}^{h^s(t,x)} \frac{1}{s} \phi(x,y)\,d(x,y)\\
 &\to \int_{\R^n} \phi(x,h(t,x)) \delta h(t,x)\,dx
\end{align*}
as $s\to 0$ uniformly in $t\in \bar J$, where we have used the
differentiability result of Theorem \ref{thm:difforg}. Moreover, it is
obvious that the middle term is continuous with respect to $t$. Hence,
\eqref{eq:dal} is proven and we have only to show that
$\delta\alpha$ solves \eqref{transpclin}.

To this end, let $\varphi\in C_c^1(J\times\R^{n+1})$ be arbitrary. Since
$\alpha,\alpha^s$ are distributional solutions of \eqref{transpc}, we have
\begin{align*}
0&=\int_J \int_{\R^{n+1}} -\left(\partial_t \varphi+(u\cdot\nabla) \varphi)
\tfrac{\alpha^s-\alpha}{s}+\alpha^s
(\tfrac{u^s-u}{s}\cdot\nabla)\varphi\right)(t,x,y)\,d(x,y)\,dt\\
&\to \int_J \int_{\R^{n+1}} -\left(\partial_t \varphi+(u\cdot\nabla) \varphi)
\delta\alpha+\alpha
(\delta u\cdot\nabla)\varphi\right)(t,x,y)\,d(x,y)\,dt
\end{align*}
as $s\to 0$. For the limit transition, we have used $u\in C(\bar
J;W^1_\infty(\R^{n+1})$, \eqref{eq:dal} and that by Theorem \ref{thm:difforg}
$\alpha^s=1_{\Omega^s(t)}\to\alpha=1_{\Omega(t)}$
in $L_{2,loc}(J\times \R^{n+1})$ and
$\frac{u^s-u}{s}\to\delta u$ in $C(\bar J;L_p(\R^{n+1}))$.
Hence, $\delta\alpha$ is a distributional solution of
\eqref{transpclin}, which is unique by Proposition \ref{prop:MV}.
\end{proof}

The next step is to express the surface tension term by using the phase
indicator $\alpha$ such that its sensitivities can be expressed by using the
measure $\delta\alpha$.

We first rewrite the surface tension term in the weak formulation
\eqref{weak1}.
\begin{lemma}
Let $\varphi\in C_c^1(\R^{n+1})$. Then with the curvature $\kappa(t)$ of
$\Gamma(t)$ according to \eqref{nukappa} one has the identity
\begin{align}\notag
&\int_{\Gamma(t)} (\sigma \kappa \nu)(t,x,y)^\top
\varphi(x,y)\,dS(x,y)\\\notag
&=\int_{\R^n} \sigma \Div\nolimits_x\left(\frac{\nabla h(t,x)}{\sqrt{1+|\nabla h(t,x)|^2}}\right)
\binom{-\nabla h(t,x)}{1}^\top \varphi(x,h(t,x))\,dx\\\label{surface}
&=\int_{\R^n}\sigma\frac{(\nabla h(t,x)^\top,-1)}{\sqrt{1+|\nabla h(t,x)|^2}}
(D\varphi(x,h(t,x))-\Div(\varphi)(x,h(t,x))I) 
\binom{\nabla h(t,x)}{-1}\,dx.
\end{align}
\end{lemma}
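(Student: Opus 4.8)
The plan is to derive the first identity by a plain parametrization of the surface integral over the graph $\Gamma(t)$, and to obtain the second identity by rewriting its right-hand side as $-\int_{\Gamma(t)}\sigma\,\Div\nolimits_\Gamma\varphi\,dS$ and then invoking the tangential divergence theorem on $\Gamma(t)$.

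For the first equality I would use that $\Gamma(t)=\{(x,h(t,x)):x\in\R^n\}$ carries the surface measure $dS(x,y)=\sqrt{1+|\nabla h(t,x)|^2}\,dx$ and that, by \eqref{nukappa}, the traces of $\nu$ and $\kappa$ on $\Gamma(t)$ are $\hat\nu(t,x)=\frac{1}{\sqrt{1+|\nabla h|^2}}\binom{-\nabla h}{1}$ and $\hat\kappa(t,x)=\Div\nolimits_x\big(\frac{\nabla h}{\sqrt{1+|\nabla h|^2}}\big)$. Substituting these and cancelling the factor $\sqrt{1+|\nabla h|^2}$ coming from $dS$ against the one in the denominator of $\hat\nu$ yields the middle expression in \eqref{surface} directly.

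For the second equality the first step is to put its claimed right-hand side into intrinsic form. With $w:=\binom{\nabla h}{-1}$ one has $|w|^2=1+|\nabla h|^2$ and $w=-\sqrt{1+|\nabla h|^2}\,\hat\nu$, so $\frac{(\nabla h^\top,-1)}{\sqrt{1+|\nabla h|^2}}=-\hat\nu^\top$ and the integrand on the right of \eqref{surface}, evaluated at $(x,h(t,x))$, equals $\sigma\sqrt{1+|\nabla h|^2}\,\big(\hat\nu^\top D\varphi\,\hat\nu-\Div\varphi\big)$. Decomposing $I=(I-\hat\nu\hat\nu^\top)+\hat\nu\hat\nu^\top$ and taking traces gives $\Div\varphi=\Div\nolimits_\Gamma\varphi+\hat\nu^\top D\varphi\,\hat\nu$ with the surface divergence $\Div\nolimits_\Gamma\varphi:=\operatorname{tr}\big((I-\hat\nu\hat\nu^\top)D\varphi\big)$, hence the right-hand side of \eqref{surface} equals $-\int_{\Gamma(t)}\sigma\,\Div\nolimits_\Gamma\varphi\,dS$. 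By the first equality the claim therefore reduces to $\int_{\Gamma(t)}\kappa\,\nu^\top\varphi\,dS=-\int_{\Gamma(t)}\Div\nolimits_\Gamma\varphi\,dS$. For this I would split $\varphi=\varphi_\tau+(\varphi^\top\nu)\nu$ on $\Gamma(t)$ into its tangential and normal parts: then $\Div\nolimits_\Gamma\big((\varphi^\top\nu)\nu\big)=(\varphi^\top\nu)\Div\nolimits_\Gamma\nu=-(\varphi^\top\nu)\kappa$, using $\kappa=-\Div\nolimits_\Gamma\nu$ and that surface gradients are tangential, while $\int_{\Gamma(t)}\Div\nolimits_\Gamma\varphi_\tau\,dS=0$ by the Gauss theorem applied to the tangential $C^1$ field $\varphi_\tau$ on the graph over a ball $B_R\subset\R^n$ chosen so large that $\varphi$ vanishes outside $B_R\times\R$. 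Here I would use that $h(t,\cdot)\in BC^2(\R^n)$ (from $\E_4(t_0)\hookrightarrow C(\bar J;BC^2(\R^n))$, see \eqref{imbc}), so that $\Gamma(t)$ is a $C^2$ hypersurface and all these surface quantities are well defined and continuous.

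The step that needs the most care is the bookkeeping of signs and factors when matching the purely algebraic graph expression on the right of \eqref{surface} with the intrinsic quantity $\Div\nolimits_\Gamma\varphi$, in particular respecting the paper's convention $\kappa=-\Div\nolimits_\Gamma\nu$. If one prefers to avoid any manifold language, an equivalent route is to integrate by parts directly in $x\in\R^n$: move $\Div\nolimits_x$ off $\frac{\nabla h}{\sqrt{1+|\nabla h|^2}}$ onto $g(x):=\binom{-\nabla h}{1}^\top\varphi(x,h(t,x))$. The pitfall there is that a single integration by parts reintroduces second derivatives of $h$ (through $\nabla_x\nabla h$ acting on the $\R^n$-valued part $\varphi^x(\cdot,h(t,\cdot))$ of $\varphi$); one isolates that term, uses $\nabla h^\top\nabla^2 h=\tfrac12\nabla_x|\nabla h|^2$ so that its contribution becomes $\sigma\,\nabla_x\sqrt{1+|\nabla h|^2}\cdot\varphi^x(\cdot,h(t,\cdot))$, integrates by parts once more, and then reassembles the remaining first-order terms to recover the right-hand side of \eqref{surface}. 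I would present the divergence-theorem argument as the proof and keep this computation only as a cross-check.
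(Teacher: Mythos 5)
Your proposal is correct, and for the first identity it coincides with the paper's (read everything off the graph parametrization via \eqref{nukappa}, with $dS=\sqrt{1+|\nabla h|^2}\,dx$ cancelling the denominator of $\hat\nu$). For the second identity your route is a genuine, self-contained variant: the paper disposes of it in one line by integrating by parts in the flat variable $x$ and citing the known weak-curvature identity $\int_{\Gamma(t)}\kappa\,\nu^\top\varphi\,dS=-\int_{\Gamma(t)}\nabla_T\mathrm{id}:\nabla_T\varphi\,dS$ from Deckelnick, whereas you first identify the right-hand side of \eqref{surface} as $-\int_{\Gamma(t)}\sigma\,\Div\nolimits_\Gamma\varphi\,dS$ (via $w=\binom{\nabla h}{-1}=-\sqrt{1+|\nabla h|^2}\,\nu$ and $\Div\varphi-\nu^\top D\varphi\,\nu=\Div\nolimits_\Gamma\varphi$, both computations check out) and then prove the needed tangential divergence theorem directly, splitting $\varphi$ into tangential and normal parts, using $\kappa=-\Div\nolimits_\Gamma\nu$, the tangentiality of surface gradients, and the vanishing of $\int_\Gamma\Div\nolimits_\Gamma\varphi_\tau\,dS$ for compactly supported tangential $C^1$ fields on the $C^2$ graph. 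Both arguments rest on the same differential-geometric fact (the weak form of the mean curvature vector, equivalently the first variation of area); yours buys an explicit, citation-free derivation with the sign conventions of the paper made visible, at the cost of invoking the surface Gauss theorem, while the paper's version is shorter by reference — and your concluding ``cross-check'' integration by parts in $x$ is in fact essentially the computation the paper's one-line proof alludes to. Your regularity bookkeeping ($h(t,\cdot)\in BC^2(\R^n)$ by \eqref{imbc}, so $\nu$ is $C^1$ on $\Gamma(t)$ and the tangential/normal splitting is legitimate, with $R$ chosen so that $\varphi$ vanishes outside $B_R\times\R$) is adequate.
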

\begin{proof}
The first identity follows directly from \eqref{nukappa}.
The second one follows from integration by parts and
reflects the well known identity from
differential geometry, see for example \cite[Lem. 2.1]{Deckelnick}
\[
\int_{\Gamma(t)} (\kappa \nu)(t,x,y)^\top \varphi(x,y)\,dS(x,y)=
-\int_{\Gamma(t)} \nabla_T \mathop{\text{id}}\nolimits_{\Gamma(t)}(x,y) : \nabla_T
\varphi(x,y)\,dS(x,y),
\]
where $\nabla_T \varphi_i=\nabla \varphi_i-\nu^\top \nabla \varphi_i \nu$ is
the tangential derivative.
\end{proof}
To compute the interface normal from $\nabla\alpha$, we use the following
simple fact.
\begin{lemma}\label{lem:normal}
Let $\psi\in C_c^1(\R^{n+1};\R^{n+1})$. Then
\begin{align*}
 -\int_{\R^{n+1}} \psi(x,y)^\top \nabla\alpha(t,x,y)\,d(x,y)&=
 \int_{\Gamma(t)} \psi(x,y)^\top\nu(t,x,y)\,dS(x,y)\\
 &=
 \int_{\R^n} \psi(x,h(t,x))^\top \binom{-\nabla h(t,x)}{1}\,dx.
\end{align*}
\end{lemma}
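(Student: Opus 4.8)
The plan is to exploit that $\alpha(t,\cdot)=1_{\Omega_1(t)}$ by Proposition \ref{prop:alpha}, so that $\nabla\alpha(t,\cdot)$ is, in the sense of distributions, a surface measure concentrated on $\Gamma(t)$. First I would note that since $u\in C(\bar J;W^1_\infty(\R^{n+1};\R^{n+1}))$ (by the imbeddings \eqref{imba}, \eqref{imbc}), Proposition \ref{prop:alpha} applies and gives $\alpha(t,\cdot)=1_{\Omega_1(t)}\in BV_{loc}(\R^{n+1})$, with $\Omega_1(t)$ the open subgraph $\{y<h(t,x)\}$ and $\Gamma(t)$ its $C^1$ boundary (indeed $h(t,\cdot)\in BC^2(\R^n)$ by \eqref{imbc}). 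Hence $\alpha(t,\cdot)$ has finite perimeter locally and the Gauss–Green / divergence theorem for sets of finite perimeter yields, for any $\psi\in C_c^1(\R^{n+1};\R^{n+1})$,
\[
 -\int_{\R^{n+1}} \psi^\top \nabla\alpha(t,\cdot)\,d(x,y)
 = \int_{\R^{n+1}} \Div(\psi)\,1_{\Omega_1(t)}\,d(x,y)
 = \int_{\Omega_1(t)} \Div\psi\,d(x,y)
 = \int_{\Gamma(t)} \psi^\top \nu(t,\cdot)\,dS,
\]
where $\nu$ is the outer unit normal of $\Omega_1(t)$, which by the sign convention set up after \eqref{P} points from $\Omega_1(t)$ into $\Omega_2(t)$; this is the first claimed equality. (The first step, integration against $\Div\psi$, is just the definition of the distributional gradient of $\alpha$.)

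For the second equality I would simply parametrize $\Gamma(t)$ by the graph map $x\mapsto(x,h(t,x))$, for which the surface measure is $dS=\sqrt{1+|\nabla h(t,x)|^2}\,dx$ and, by \eqref{nukappa}, the unit normal is $\hat\nu(t,x)=(1+|\nabla h(t,x)|^2)^{-1/2}\binom{-\nabla h(t,x)}{1}$. Substituting,
\[
 \int_{\Gamma(t)} \psi^\top\nu(t,\cdot)\,dS
 = \int_{\R^n} \psi(x,h(t,x))^\top \frac{1}{\sqrt{1+|\nabla h(t,x)|^2}}\binom{-\nabla h(t,x)}{1}\sqrt{1+|\nabla h(t,x)|^2}\,dx,
\]
and the $\sqrt{1+|\nabla h|^2}$ factors cancel, giving exactly the right-hand side of the lemma. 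This is the elementary ``change of variables on a graph'' computation already used implicitly in \eqref{surface}.

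The only point requiring a little care—the ``main obstacle,'' though it is mild here—is justifying the divergence-theorem step at the regularity we have: $\alpha(t,\cdot)$ is merely an indicator function, not a $C^1$ or Sobolev function, so the identity $-\int \psi^\top\nabla\alpha = \int_{\Gamma(t)}\psi^\top\nu\,dS$ must be read as the De Giorgi–Federer Gauss–Green formula for the set of finite perimeter $\Omega_1(t)$, or, equivalently, obtained by approximating $1_{\Omega_1(t)}$ by smooth cutoffs of the signed distance to $\Gamma(t)$ and passing to the limit using $h(t,\cdot)\in BC^2(\R^n)$. Since $\Gamma(t)$ is in fact a $C^2$ graph, the reduced boundary coincides with $\Gamma(t)$ and the outer normal coincides with $\hat\nu$, so no measure-theoretic subtleties actually arise and the classical divergence theorem on the smooth-boundary domain $\Omega_1(t)\cap\operatorname{supp}\psi$ suffices.
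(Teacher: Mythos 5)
Your proposal is correct and follows essentially the same route as the paper's proof: unwind the distributional gradient of $\alpha=1_{\Omega_1(t)}$ against $\Div\psi$, apply the divergence theorem on $\Omega_1(t)$ (harmless since $\psi$ has compact support and $\Gamma(t)$ is a $C^2$ graph), and then parametrize $\Gamma(t)$ by $x\mapsto(x,h(t,x))$ so the $\sqrt{1+|\nabla h|^2}$ factors cancel. The only difference is that you spell out the finite-perimeter/Gauss--Green justification, which the paper leaves implicit.
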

\begin{proof}
By the definition of distributional derivatives one has
\begin{align*}
&-\int_{\R^{n+1}} \psi(x,y)^\top \nabla\alpha(t,x,y)\,d(x,y)=
\int_{\R^{n+1}} \Div(\psi)(x,y)\alpha(t,x,y)\,d(x,y)\\
&=
\int_{\Omega_1(t)} \Div(\psi)(x,y)\,d(x,y)=
\int_{\Gamma(t)} \psi(x,y)^\top\nu(t,x,y)\,dS(x,y)\\
&=\int_{\R^n} \psi(x,h(t,x))^\top \binom{-\nabla h(t,x)}{1}\,dx.
\end{align*}
\end{proof}
Let now $\delta \in (0,1/2)$ and
\[
\psi_\delta\in C_c^1((-1,1)),\quad
\psi_\delta|_{[-1+\delta,1-\delta]}\equiv 1,\quad
\psi_\delta(-s)=\psi_\delta(s)\quad\forall\,s\in\R,
\int_\R \psi_\delta(s)\,ds=1.
\]
and set
\[
 \phi_{\varepsilon}(x,y)=\frac{1}{\varepsilon^{n}}
\psi_\delta(y/\varepsilon)\prod_{i=1}^n\psi_\delta(x_i/\varepsilon).
\]
To recover a mollified normal (not necessarily of unit length) we use
\begin{align}\label{nueps1}
\nu_{\varepsilon}(t,x,y)&:=
-\int_{\R^{n+1}} \phi_{\varepsilon}((\tilde x,\tilde y)-(x,y))
\nabla\alpha(t,\tilde x,\tilde y)\,d(\tilde x,\tilde y).
\end{align}
Then by Lemma \ref{lem:normal}
\begin{align*}
\nu_{\varepsilon}(t,x,y)
&=\int_{\Gamma(t)} \phi_{\varepsilon}((\tilde x,\tilde y)-(x,y))
\nu(t,\tilde x,\tilde y)\,dS(\tilde x,\tilde y)\\
&=\int_{\R^n} \phi_{\varepsilon}((\tilde x,h(t,\tilde
x))-(x,y))\binom{-\nabla h(t,\tilde x)}{1}\,d\tilde x.
\end{align*}
Now assume that
\begin{equation}\label{Dhbnd}
|\nabla h|\le 1-\delta\quad\mbox{on }
x+[-\varepsilon,\varepsilon]^n.
\end{equation}
Then we have by the definition of $\phi_{\varepsilon}$
\begin{align}\label{nueps}
\nu_{\varepsilon}(t,x,h(t,x))&=
\frac{1}{\varepsilon^n}\int_{\R^n} \prod_{i=1}^n\psi_\delta((\tilde x_i-x_i)/\varepsilon)
\binom{-\nabla h(t,\tilde x)}{1}\,d\tilde x.
\end{align}
\begin{lemma}\label{lem:nueps}
Let \eqref{Dhbnd} hold. If $h\in C(\bar J;BC^2(\R^n))$ then there is $C>0$ such that
\[
  |\nu_{\varepsilon}(t,x,h(t,x))-(-\nabla h(t,x),1)^\top|
\le C \varepsilon\quad\forall\,
(t,x)\in J\times \R^n.
\]
On compact subsets the error is $o(\varepsilon)$.
\end{lemma}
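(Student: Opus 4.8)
The plan is to start from the explicit formula \eqref{nueps}, valid under assumption \eqref{Dhbnd}: with the rescaled $n$-dimensional kernel $k_\varepsilon(z):=\varepsilon^{-n}\prod_{i=1}^n\psi_\delta(z_i/\varepsilon)$ we have $\nu_\varepsilon(t,x,h(t,x))=\int_{\R^n}k_\varepsilon(\tilde x-x)\binom{-\nabla h(t,\tilde x)}{1}\,d\tilde x$. The whole argument rests on three elementary properties of $k_\varepsilon$ that follow directly from the definition of $\psi_\delta$: (i) $\int_{\R^n}k_\varepsilon\,dz=1$ (Fubini together with $\int_\R\psi_\delta=1$); (ii) $\operatorname{supp}k_\varepsilon\subset[-\varepsilon,\varepsilon]^n$; (iii) $k_\varepsilon(-z)=k_\varepsilon(z)$ (evenness of $\psi_\delta$); moreover $\int_{\R^n}|k_\varepsilon|\,dz=C_\psi^n$ with $C_\psi:=\|\psi_\delta\|_{L_1(\R)}$. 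By (i) the last component of $\nu_\varepsilon(t,x,h(t,x))$ equals $1$ identically, so the entire error lies in the first $n$ components, i.e.\ in $E_\varepsilon(t,x):=\nabla h(t,x)-\int_{\R^n}k_\varepsilon(\tilde x-x)\nabla h(t,\tilde x)\,d\tilde x$.

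For the global $O(\varepsilon)$ bound I would estimate $E_\varepsilon$ crudely, using only (i), (ii) and $h\in C(\bar J;BC^2(\R^n))$: since $|\nabla h(t,\tilde x)-\nabla h(t,x)|\le\|\nabla^2h\|_{C(\bar J;BC)}\,|\tilde x-x|$ and $|\tilde x-x|\le\sqrt n\,\varepsilon$ on $\operatorname{supp}k_\varepsilon(\cdot-x)$,
\[
  |E_\varepsilon(t,x)|\le\int_{\R^n}|k_\varepsilon(\tilde x-x)|\,|\nabla h(t,\tilde x)-\nabla h(t,x)|\,d\tilde x\le C_\psi^n\sqrt n\,\|\nabla^2h\|_{C(\bar J;BC)}\,\varepsilon,
\]
which yields the first claim with $C=C_\psi^n\sqrt n\,\|h\|_{C(\bar J;BC^2(\R^n))}$.

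For the sharper $o(\varepsilon)$ estimate on a compact set I would additionally exploit the symmetry (iii). Fix a compact $Q\subset\R^n$; then $\tilde Q:=\bar J\times(Q+[-1,1]^n)$ is compact and $\nabla^2h$ is uniformly continuous on it with some modulus of continuity $\omega$. First-order Taylor expansion with integral remainder gives, for $\varepsilon\le1$, $x\in Q$ and $\tilde x\in\operatorname{supp}k_\varepsilon(\cdot-x)$,
\[
  \nabla h(t,\tilde x)=\nabla h(t,x)+\nabla^2h(t,x)(\tilde x-x)+r_\varepsilon(t,x,\tilde x),\qquad |r_\varepsilon(t,x,\tilde x)|\le\sqrt n\,\varepsilon\,\omega(\sqrt n\,\varepsilon).
\]
Integrating against $k_\varepsilon(\tilde x-x)$, the constant term reproduces $\nabla h(t,x)$ by (i), and the linear term drops out by (iii): componentwise $\int_{\R^n}k_\varepsilon(z)z_j\,dz=0$ because $s\mapsto s\,\psi_\delta(s)$ is odd while the remaining factors integrate to $1$. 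Hence $E_\varepsilon(t,x)=-\int_{\R^n}k_\varepsilon(\tilde x-x)\,r_\varepsilon(t,x,\tilde x)\,d\tilde x$, so $|E_\varepsilon(t,x)|\le C_\psi^n\sqrt n\,\omega(\sqrt n\,\varepsilon)\,\varepsilon=o(\varepsilon)$ uniformly for $(t,x)\in\bar J\times Q$, as claimed.

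I do not expect a genuine obstacle here; the only point demanding care is that $h\in C(\bar J;BC^2(\R^n))$ provides continuity but not uniform continuity of $\nabla^2h$ on all of $\R^n$, which is exactly why the improved rate $o(\varepsilon)$ can only be asserted on compact subsets, where uniform continuity is recovered.
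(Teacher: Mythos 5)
Your argument is correct and takes essentially the same route as the paper's proof: the global $O(\varepsilon)$ bound from the uniform Lipschitz constant of $\nabla h$, and the $o(\varepsilon)$ refinement from the first-order Taylor expansion of $\nabla h$ combined with the symmetry of $\psi_\delta$ (which kills the linear term) and uniform continuity of $\nabla^2 h$ on compact sets. You simply spell out the details the paper leaves implicit, including the careful use of $\|\psi_\delta\|_{L_1}$ since $\psi_\delta$ need not be nonnegative.
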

\begin{proof}
Since $\nabla h$ has a uniform Lipschitz constant with respect to $x$ the
first assertion follows immediately from \eqref{nueps}.
Moreover, since $\nabla h(t,\tilde x)=\nabla h(t,x)+\nabla^2 h(t,x) (\tilde x-x)
+o(\|\tilde x-x\|)$, the $o(\varepsilon)$ is obtained by the symmetry of
$\psi_\delta$.
\end{proof}
The variation of $\nu_{\varepsilon}$ is
\begin{align}\label{delnueps}
\delta\nu_{\varepsilon}(t,x,y)&:=
-\int_{\R^{n+1}} \phi_{\varepsilon}((\tilde x,\tilde y)-(x,y))
\nabla d\delta\alpha(t)(\tilde x,\tilde y).
\end{align}
with the measure-valued solution of \eqref{transpclin}.
\begin{lemma}\label{lem:delnueps}
Let \eqref{Dhbnd} hold. If $\delta h\in C(\bar J;BC^2(\R^n))$ then there is $C>0$ such that
\[
  |\delta\nu_{\varepsilon}(t,x,h(t,x))-(-\nabla \delta h(t,x),1)^\top|
\le C \varepsilon\quad\forall\,
(t,x)\in J\times \R^n.
\]
On compact subsets the error is $o(\varepsilon)$.
\end{lemma}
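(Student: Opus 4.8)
The plan is to run, for $\delta\nu_\varepsilon$, exactly the argument that was used for $\nu_\varepsilon$ in the proof of Lemma~\ref{lem:nueps}, the only preliminary work being to produce a pointwise representation of $\delta\nu_\varepsilon(t,x,h(t,x))$ of the same shape as \eqref{nueps}. Starting from the definition \eqref{delnueps}, I would note that for a fixed point $z=(x,y)$ the translate $\phi_\varepsilon(\cdot-z)$ and all its first partials lie in $C_c(\R^{n+1})$, so the $i$-th component of \eqref{delnueps} is the distributional pairing $-\langle\nabla\delta\alpha(t),e_i\,\phi_\varepsilon(\cdot-z)\rangle=\langle\delta\alpha(t),\partial_i[\phi_\varepsilon(\cdot-z)]\rangle$; inserting the explicit form \eqref{delalpha} of the measure $\delta\alpha(t)$ then yields
\[
\delta\nu_\varepsilon(t,x,y)=\int_{\R^n}(\nabla\phi_\varepsilon)\bigl((\tilde x,h(t,\tilde x))-(x,y)\bigr)\,\delta h(t,\tilde x)\,d\tilde x .
\]
Equivalently, and this is the cleanest way to see it, one can apply the vector identity of Lemma~\ref{lem:normal} together with \eqref{delalpha} to recognise the right-hand side as the $\delta h$-variation of the representation of $\nu_\varepsilon$ obtained just after Lemma~\ref{lem:normal}, i.e.\ the variation of $\tilde x\mapsto\phi_\varepsilon((\tilde x,h(t,\tilde x))-z)\,(-\nabla h(t,\tilde x),1)^\top$ integrated over $\tilde x$.

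Next I would specialise to $y=h(t,x)$ and bring in \eqref{Dhbnd}, precisely as in the proof of Lemma~\ref{lem:nueps}. Splitting $\nabla\phi_\varepsilon=(\nabla_x\phi_\varepsilon,\partial_y\phi_\varepsilon)$ and using the chain-rule identity $\nabla_{\tilde x}[\phi_\varepsilon((\tilde x,h(t,\tilde x))-z)]=(\nabla_x\phi_\varepsilon)(\cdots)+(\partial_y\phi_\varepsilon)(\cdots)\nabla h(t,\tilde x)$, I would integrate by parts in $\tilde x$ in the tangential part (no boundary terms, by compact support). The role of \eqref{Dhbnd} is that on the support of $\prod_j\psi_\delta((\tilde x_j-x_j)/\varepsilon)$ the $y$-argument $h(t,\tilde x)-h(t,x)$ stays in the interval on which $\psi_\delta\equiv1$, hence $\psi_\delta$ evaluated there equals $1$ and $\psi_\delta'$ evaluated there vanishes; consequently every term carrying a factor $\partial_y\phi_\varepsilon$ drops out, and $\delta\nu_\varepsilon(t,x,h(t,x))$ reduces to a genuine mollification of $\nabla\delta h(t,\cdot)$ against the kernel $\varepsilon^{-n}\prod_j\psi_\delta((\tilde x_j-x_j)/\varepsilon)$, which has total mass one. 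This puts $\delta\nu_\varepsilon(t,x,h(t,x))$ into exactly the form \eqref{nueps} with $-\nabla h$ replaced by $-\nabla\delta h$.

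From this point the two assertions follow verbatim as in Lemma~\ref{lem:nueps}. Since $\delta h\in C(\bar J;BC^2(\R^n))$, the map $\nabla\delta h(t,\cdot)$ has a uniform Lipschitz constant; subtracting the mean value $\nabla\delta h(t,x)$ inside the mollification and using $\int_\R\psi_\delta=1$ gives the global bound $|\delta\nu_\varepsilon(t,x,h(t,x))-(-\nabla\delta h(t,x),1)^\top|\le C\varepsilon$. On compact subsets one refines this with the second-order expansion $\nabla\delta h(t,\tilde x)=\nabla\delta h(t,x)+\nabla^2\delta h(t,x)(\tilde x-x)+o(|\tilde x-x|)$: the first-order term is annihilated by the evenness of $\psi_\delta$, and what is left is $o(\varepsilon)$ locally uniformly.

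The one step that really needs care is the second one. First, the integration by parts must be justified purely at the level of the measure $\delta\alpha(t)$ and the distribution $\nabla\delta\alpha(t)$ — this is exactly where $\phi_\varepsilon\in C_c^1$ and the explicit description \eqref{delalpha} (rather than any regularity of $\delta\alpha$) are indispensable. Second, after the chain rule one has to keep precise track of which terms survive: the whole reduction hinges on \eqref{Dhbnd} forcing the $\partial_y\phi_\varepsilon$ contributions to vanish on the relevant support, so that $\delta\nu_\varepsilon(t,x,h(t,x))$ collapses to a tangential mollification of $-\nabla\delta h$. Once that reduction is in place the quantitative part is a routine repetition of the proof of Lemma~\ref{lem:nueps}.
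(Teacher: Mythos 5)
Your proposal follows essentially the same route as the paper's proof: insert \eqref{delalpha} into \eqref{delnueps} to obtain $\delta\nu_{\varepsilon}(t,x,y)=\int_{\R^n}(\nabla\phi_{\varepsilon})((\tilde x,h(t,\tilde x))-(x,y))\,\delta h(t,\tilde x)\,d\tilde x$, use \eqref{Dhbnd} at $y=h(t,x)$ to make every $\partial_y\phi_{\varepsilon}$ contribution vanish, integrate by parts in $\tilde x$ so that $\delta\nu_{\varepsilon}(t,x,h(t,x))$ becomes the mollification of $-\nabla\delta h(t,\cdot)$ against $\varepsilon^{-n}\prod_{i}\psi_\delta((\tilde x_i-x_i)/\varepsilon)$, and then conclude by the Lipschitz bound and the second-order Taylor/symmetry argument exactly as in Lemma \ref{lem:nueps}. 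The only imprecision is the last component: your reduction (like the paper's own computation) yields the limit vector $(-\nabla\delta h,0)^\top=\delta\tilde\nu$ rather than literally ``\eqref{nueps} with $-\nabla h$ replaced by $-\nabla\delta h$'' (which would carry a $1$ in the last entry); the ``$1$'' in the lemma's statement appears to be a typo for $0$, as confirmed by the later use of $\delta\tilde\nu$, so this does not affect the validity of your argument.
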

\begin{proof}
Then by \eqref{delalpha}
\begin{align*}
\delta\nu_{\varepsilon}(t,x,y)&:=
\int_{\R^{n+1}} \nabla\phi_{\varepsilon}((\tilde x,\tilde y)-(x,y))
d\delta\alpha(t)(\tilde x,\tilde y)\\
&=\int_{\R^n} \nabla\phi_{\varepsilon}((\tilde x,h(t,\tilde x))-(x,y))
\delta h(t,\tilde x)\,d\tilde x.
\end{align*}
Setting $y=h(t,x)$ and using \eqref{Dhbnd} we obtain
\begin{align*}
\delta\nu_{\varepsilon}(t,x,h(t,x))&=
\frac{1}{\varepsilon^n}\int_{\R^n} 
\binom{\nabla_{\tilde x} \prod_{i=1}^n\psi_\delta((\tilde x_i-x_i)/\varepsilon)}{0}
\delta h(t,\tilde x)\,d\tilde x\\&=
\frac{1}{\varepsilon^n}\int_{\R^n} 
\prod_{i=1}^n\psi_\delta((\tilde x_i-x_i)/\varepsilon)
\binom{-\nabla \delta h(t,\tilde x)}{0}\,d\tilde x.
\end{align*}
The remaining proof is identical to the one of Lemma \ref{lem:nueps}.
\end{proof}
We are now in the position to show the following result.
\begin{theorem}
If \eqref{Dhbnd} holds for the solution $(u,q)$ of \eqref{P} according to Theorem
\ref{thm:difforg} (which is satisfied for $\varepsilon_0>0$ small enough)
then it satisfies the VoF-type formulation
\eqref{weakVoF1}--\eqref{weakVoF3}.
\end{theorem}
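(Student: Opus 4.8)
The plan is to reduce the three claimed identities to the weak formulation \eqref{weak1}, \eqref{weak2} — which the solution $(u,q)$ of \eqref{P} satisfies — and to identify the mollified surface tension term in \eqref{weakVoF1} with the classical one $\int_{\Gamma(t)}\sigma\kappa\nu^\top\varphi\,dS$. First I would invoke Proposition \ref{prop:alpha}: the unique solution of \eqref{transpc} is $\alpha(t,\cdot)=1_{\Omega_1(t)}$, which gives \eqref{weakVoF3} outright together with the pointwise identities $\rho(\alpha)=\alpha\rho_1+(1-\alpha)\rho_2=\rho$, $\mu(\alpha)=\mu$, so that the left hand side of \eqref{weakVoF1} equals that of \eqref{weak1}. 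Equation \eqref{weakVoF2} is immediate from \eqref{weak2}, equivalently from the observation that $u\in W^1_p(J\times\R^{n+1},\R^{n+1})$ with $[u]=0$ on $\Gamma(t)$ and $\Div u=0$ off $\Gamma(t)$ forces $\Div u=0$ a.e. Everything thus reduces to showing that, for each $\varphi\in C_c^1(\R^{n+1};\R^{n+1})$ and each $t$, the $\varepsilon\searrow 0$ limit on the right of \eqref{weakVoF1} equals $\int_{\Gamma(t)}\sigma\kappa\nu^\top\varphi\,dS$.

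For this step I would first fix $\varepsilon_0$ so small that the imbedding \eqref{imbc} makes $\|h\|_{C(\bar J;BC^2(\R^n))}\le C\|h\|_{\E_4(t_0)}$ smaller than $1-\delta$; then \eqref{Dhbnd} holds for all $(t,x)$ and all $\varepsilon>0$, so Lemma \ref{lem:nueps} applies, and by \eqref{nueps} together with $\int_\R\psi_\delta=1$ the last component of $\nu_\varepsilon(t,x,h(t,x))$ equals $1$, whence $|\nu_\varepsilon(t,x,h(t,x))|\ge 1$ and the normalized normal in \eqref{weakVoF1} is well defined. Since the vector measure $\nabla\alpha(t,\cdot)$ is concentrated on $\Gamma(t)$ and acts as in Lemma \ref{lem:normal}, applying that lemma with the row vector $\sigma\frac{\nu_\varepsilon^\top}{|\nu_\varepsilon|}(D\varphi-\Div(\varphi)I)$ in place of $\psi^\top$ rewrites the $\varepsilon$-integral as $\int_{\R^n}\sigma\frac{\nu_\varepsilon(t,x,h(t,x))^\top}{|\nu_\varepsilon(t,x,h(t,x))|}(D\varphi-\Div(\varphi)I)(x,h(t,x))\binom{-\nabla h(t,x)}{1}\,dx$, the outer minus sign in \eqref{weakVoF1} cancelling the one supplied by Lemma \ref{lem:normal}. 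Because $\varphi$ has compact support and, by Lemma \ref{lem:nueps}, $\nu_\varepsilon(t,x,h(t,x))\to(-\nabla h(t,x),1)^\top$ uniformly there with denominators staying in $[1,C]$, I may pass to the limit under the integral and obtain $\int_{\R^n}\sigma\frac{(-\nabla h^\top,1)}{\sqrt{1+|\nabla h|^2}}(D\varphi-\Div(\varphi)I)(x,h(t,x))\binom{-\nabla h}{1}\,dx$.

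It then remains to match this with \eqref{surface}: with $a=\binom{-\nabla h}{1}$ (so $|a|=\sqrt{1+|\nabla h|^2}$) and $M=D\varphi-\Div(\varphi)I$, the quantity just obtained is $\int_{\R^n}\sigma\,a^\top M a/|a|\,dx$, while the right hand side of \eqref{surface} is $\int_{\R^n}\sigma\,(-a)^\top M(-a)/|a|\,dx$; replacing $a$ by $-a$ in both the row and the column factor leaves the expression unchanged, so the two coincide and equal $\int_{\Gamma(t)}\sigma\kappa\nu^\top\varphi\,dS$. Inserting this into \eqref{weak1} and using $\rho(\alpha)=\rho$, $\mu(\alpha)=\mu$ yields \eqref{weakVoF1}, which together with \eqref{weakVoF2}, \eqref{weakVoF3} finishes the proof.

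The main obstacle is precisely this passage to the limit $\varepsilon\searrow 0$: it has to combine three previously established ingredients — the convergence of the mollified normal (Lemma \ref{lem:nueps}), the representation of $\nabla\alpha$ as the surface measure $-\nu\,dS$ on $\Gamma(t)$ (Lemma \ref{lem:normal}), and the differential-geometric identity \eqref{surface} — while ensuring that $|\nu_\varepsilon|$ stays bounded away from $0$ on $\Gamma(t)$ so that the normalization makes sense and the convergence on the support of $\varphi$ is uniform. The remaining points are routine: checking that \eqref{weak1}, \eqref{weak2} hold for the solution produced by Theorem \ref{thm:difforg} (integration by parts over $\Omega_1(t),\Omega_2(t)$ using the interface conditions, for which the regularity recorded in \eqref{orgest} suffices) and that \eqref{Dhbnd} is guaranteed once $\varepsilon_0$ is small.
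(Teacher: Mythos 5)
Your proposal is correct and follows essentially the same route as the paper: reduce via Proposition \ref{prop:alpha} ($\alpha=1_{\Omega_1(t)}$, hence $\rho(\alpha)=\rho$, $\mu(\alpha)=\mu$) to the identification of the mollified surface tension term with \eqref{surface}, rewrite the $\varepsilon$-integral over $\R^n$ using Lemma \ref{lem:normal}, and pass to the limit with the uniform convergence of Lemma \ref{lem:nueps}. Your additional remarks (lower bound $|\nu_\varepsilon|\ge 1$ on $\Gamma(t)$ from the last component in \eqref{nueps}, and the sign-invariance $a^\top M a=(-a)^\top M(-a)$ matching the limit with \eqref{surface}) only make explicit details the paper leaves implicit.
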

\begin{proof}
Since the solution of \eqref{transpc} is $\alpha=1_{\Omega_1}(t)$ by
Proposition \ref{prop:alpha}, the formulations
\eqref{weakVoF1}--\eqref{weakVoF3} and \eqref{weak1}--\eqref{weak2} are equivalent if
the right hand side of \eqref{weakVoF1} coincides with the surface tension
force term \eqref{surface}. To show this we note that Lemma \ref{lem:normal}
yields for any $\varepsilon>0$
\begin{align*}
&-\int_{\R^{n+1}} \sigma
\frac{\nu_{\varepsilon}(t,x,y)^\top}{|\nu_{\varepsilon}(t,x,y)|}
 (D\varphi-\Div(\varphi)I)(x,y)
\nabla\alpha(t,x,y)\,d(x,y)=\\
&\int_{\R^n} \sigma
\frac{\nu_{\varepsilon}(t,x,h(t,x))^\top}{|\nu_{\varepsilon}(t,x,h(t,x))|}
 (D\varphi-\Div(\varphi)I)(x,h(t,x))\binom{-\nabla h(t,x)}{1}\,dx.
\end{align*}
Now the uniform convergence of $\nu_{\varepsilon}(t,x,h(t,x))$ to
$\binom{-\nabla h(t,x)}{1}$ for $\varepsilon\searrow 0$
by Lemma \ref{nueps} yields the convergence of the above term to \eqref{surface}.
\end{proof}
Finally, we can justify the following VoF-type formulation for computing the sensitivities
$(\delta u,\delta q)$. Due to the limited spatial regularity of $\partial_t
u$, we have to state time derivatives on the interface in weak form.
\begin{align}\notag
&\int_{J\times\R^{n+1}}
\bigl(\partial_t (\rho(\alpha) \delta u)
+\Div(\rho(\alpha) (\delta u\otimes u+u\otimes \delta u)+\delta c)^\top
\varphi\,d(t,x,y)\\\notag
&+\int_{J\times\R^{n+1}} S(\delta u,\delta
q;\mu(\alpha)):\nabla\varphi\,d(t,x,y)\\\notag
&+\int_{J}\int_{\R^{n+1}}
(\rho_2-\rho_1) u^\top\bigl(\partial_t\varphi+u\cdot\nabla\varphi\bigr) d\delta \alpha(t)(x,y)\,dt\\
&-\int_{J}\int_{\R^{n+1}} [S(u,q;\mu(\alpha))]:\nabla\varphi\,d\delta\alpha(t)(x,y)
dt=\label{weakVoF1lin}
\end{align}
\begin{align}
\notag
&=\lim_{\varepsilon\searrow 0}-\int_{J\times\R^{n+1}} \sigma
\left(\frac{\delta\nu_{\varepsilon}^\top}{|\nu_{\varepsilon}|}-
\frac{\delta\nu_{\varepsilon}^\top\nu_{\varepsilon}\nu_{\varepsilon}^\top}{|\nu_{\varepsilon}|^3}
\right)(D\varphi-\Div(\varphi)I)
\nabla\alpha\,d(t,x,y)\\\notag
&\quad -\int_{J\times\R^{n+1}} \sigma\frac{\nu_{\varepsilon}^\top}{|\nu_{\varepsilon}|}
(D\varphi-\Div(\varphi)I)
\nabla d\delta\alpha(t)(x,y)\qquad\forall\,\varphi\in C_c^2(J\times\R^{n+1};\R^{n+1}),\\
\label{weakVoF2lin}
&\int_{J\times\R^n} \Div(\delta u)\, \psi\,d(t,x,y)=0\quad\forall\,\psi\in 
C_c^1(J\times\R^{n+1}),\\
&\delta \alpha \mbox{ satisfies \eqref{transpclin}},\label{weakVoF3lin}
\end{align}
where $\nu_{\varepsilon}$ and $\delta\nu_{\varepsilon}$ are given by
\eqref{nueps} and \eqref{delnueps}.

We need the following Lemma
\begin{lemma}\label{lem:delalph}
Let $\psi\in C_c^1(\R^{n+1};\R^{n+1})$. Then
\begin{align*}
& -\int_{\R^{n+1}} \psi(x,y)^\top \nabla d\delta\alpha(t)(x,y)\\
&= \int_{\R^n} \partial_y \psi(x,h(t,x))^\top \binom{-\nabla
 h(t,x)}{1}\delta h(t,x)+\psi(x,h(t,x))^\top \binom{-\nabla \delta h(t,x)}{0}\,dx\\
&=\int_{\R^n} \Div(\psi)(x,h(t,x)) \delta h(t,x)\,dx.
\end{align*}
\end{lemma}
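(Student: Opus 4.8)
The plan is to reduce the identity to the representation \eqref{delalpha} of $\delta\alpha(t)$ established in Proposition \ref{prop:alpha}, together with a single integration by parts along the graph of $h(t,\cdot)$. First I would read $\nabla\,d\delta\alpha(t)$ as the distributional gradient of the locally bounded Radon measure $\delta\alpha(t)$, so that for $\psi\in C_c^1(\R^{n+1};\R^{n+1})$ the definition of the distributional derivative gives
\[
 -\int_{\R^{n+1}}\psi(x,y)^\top\nabla\,d\delta\alpha(t)(x,y)
 =\int_{\R^{n+1}}\Div(\psi)(x,y)\,d\delta\alpha(t)(x,y).
\]
Since $\Div\psi\in C_c(\R^{n+1})$, applying \eqref{delalpha} with the test function $\phi=\Div\psi$ immediately yields the third line of the asserted identity,
\[
 \int_{\R^{n+1}}\Div(\psi)(x,y)\,d\delta\alpha(t)(x,y)
 =\int_{\R^n}\Div(\psi)(x,h(t,x))\,\delta h(t,x)\,dx .
\]

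It then remains to identify this with the middle expression. For this I would use the elementary chain rule for the $\R^{n+1}$-valued map $x\mapsto\psi(x,h(t,x))$, i.e.\ $\partial_{x_i}\bigl[\psi_j(x,h(t,x))\bigr]=\partial_{x_i}\psi_j(x,h(t,x))+\partial_y\psi_j(x,h(t,x))\,\partial_{x_i}h(t,x)$, which after setting $j=i$ and summing over $i=1,\dots,n$ rearranges to
\[
 \Div\nolimits_x\bigl(\psi(\cdot,h(t,\cdot))\bigr)(x)
 =(\Div\psi)(x,h(t,x))-\partial_y\psi(x,h(t,x))^\top\binom{-\nabla h(t,x)}{1}.
\]
Multiplying by $\delta h(t,x)$, integrating over $\R^n$ and integrating the $x$-divergence term by parts then gives
\begin{align*}
 \int_{\R^n}\Div(\psi)(x,h(t,x))\,\delta h(t,x)\,dx
 &=\int_{\R^n}\psi(x,h(t,x))^\top\binom{-\nabla\delta h(t,x)}{0}\,dx\\
 &\quad+\int_{\R^n}\partial_y\psi(x,h(t,x))^\top\binom{-\nabla h(t,x)}{1}\,\delta h(t,x)\,dx,
\end{align*}
which is precisely the middle line; chaining the three displays proves the lemma.

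The computation is entirely routine, so I expect no genuine obstacle. The only points deserving a word are (i) that $\nabla\delta\alpha(t)$ makes sense as a first-order distribution, which is guaranteed by $\delta\alpha(t)\in\mathcal M_{loc}(\R^{n+1})$ (Proposition \ref{prop:MV}), and (ii) that the integration by parts along the graph is legitimate, which follows since $\psi$ has compact support and, by the embedding \eqref{imbc}, $h,\delta h\in C(\bar J;BC^2(\R^n))$ are in particular uniformly $C^1$ in $x$; boundedness of $h$ also ensures that $x\mapsto\psi(x,h(t,x))$ has compact support in $x$.
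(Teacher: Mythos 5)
Your proposal is correct and follows essentially the same route as the paper: the first identity comes from the definition of the distributional gradient together with \eqref{delalpha}, and the second from the chain rule for $x\mapsto\psi(x,h(t,x))$ plus one integration by parts over $\R^n$ (the paper simply runs that computation in the reverse direction, starting from the middle expression). The regularity remarks you add (via \eqref{imbc} and the compact support of $\psi$) match the justification implicit in the paper's proof.
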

\begin{proof}
By the definition of distributional derivatives one has with
\eqref{delalpha}
\begin{align*}
&-\int_{\R^{n+1}} \psi(x,y)^\top \nabla d\alpha(t)(x,y)=
\int_{\R^{n+1}} \Div(\psi)(x,y) d\alpha(t)(x,y)\\
&=\int_{\R^n} \Div(\psi)(x,h(t,x)) \delta h(t,x)\,dx.
\end{align*}
On the other hand, integration by parts yields
\begin{align*}
& \int_{\R^n} \delta h(t,x)\partial_y \psi(x,h(t,x))^\top \binom{-\nabla
 h(t,x)}{1}+\psi(x,h(t,x))^\top \binom{-\nabla \delta h(t,x)}{0}\,dx\\
&=
 \int_{\R^n} \delta h(t,x)\partial_y \psi(x,h(t,x))^\top \binom{-\nabla
 h(t,x)}{1}\\
&\quad+\left(\sum_{i=1}^n \partial_{x_i}\psi_i(x,h(t,x))+
\partial_y \psi(x,h(t,x))^\top \binom{\nabla h(t,x)}{0}\right) \delta h(t,x)\,dx\\
&=\int_{\R^n} \Div(\psi)(x,h(t,x)) \delta h(t,x)\,dx.
\end{align*}
\end{proof}

\begin{theorem}
Let $(u,q)$ be the solution of \eqref{P} according to Theorem
\ref{thm:difforg} and let \eqref{Dhbnd} hold (which is satisfied for $\varepsilon_0>0$ small
enough). Moreover, let $(\delta u,\delta q)$ be the sensitivities
of $(u,q)$ corresponding to $(\delta u_0,\delta c)$. Then
$(\delta u,\delta q)$ solve the linearized VoF-type system
\eqref{weakVoF1lin}--\eqref{weakVoF3lin}.
\end{theorem}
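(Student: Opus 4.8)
The plan is to establish the three identities \eqref{weakVoF1lin}, \eqref{weakVoF2lin}, \eqref{weakVoF3lin} one by one. Equation \eqref{weakVoF3lin} is precisely Proposition \ref{prop:alpha}. For \eqref{weakVoF2lin} I would differentiate the incompressibility constraint: for the perturbed data $(u_0,h_0,c)+s\,(\delta u_0,0,\delta c)$ one obtains solutions $(u^s,q^s,\alpha^s)$ with $\int_{\R^{n+1}}\Div(u^s)\,\psi=0$, and since $(u^s_\pm-u_\pm)/s\to\delta u_\pm$ in $L_p(J;H^1_p(\R^{n+1},\R^{n+1}))$ by Lemma \ref{lem:uqdiff}, the difference quotient of $\Div(u^s)$ converges to $\Div(\delta u)$ in $L_p(J\times\R^{n+1})$, giving \eqref{weakVoF2lin}. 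The bulk of the work is \eqref{weakVoF1lin}. Here I would take the weak momentum balance \eqref{weak1} (equivalently \eqref{weakVoF1}), with the surface tension term written in the form \eqref{surface}, for the data $(u_0,h_0,c)+s\,(\delta u_0,0,\delta c)$, test it against $\varphi\in C_c^2(J\times\R^{n+1};\R^{n+1})$, integrate over $t\in J$, form the difference quotient in $s$ and let $s\to0$, using the convergences $u^s\to u$ in $C(\bar J;BUC(\R^{n+1},\R^{n+1}))$ (as in the proof of Lemma \ref{lem:uqdiff}), $(u^s-u)/s\to\delta u$ in $C(\bar J;L_{\tilde p})$, $(u^s_\pm-u_\pm)/s\to\delta u_\pm$ in $L_p(J;H^1_p)$ and $(q^s_\pm-q_\pm)/s\to\delta q_\pm$ in $L_p(J;L_p)$ from Lemma \ref{lem:uqdiff}, and $(\alpha^s-\alpha)/s\to\delta\alpha$ in $C(\bar J;{\mathcal M}_{loc}(\R^{n+1})-\mbox{weak$^*$})$ with $\alpha^s\to\alpha$ in $L_{2,loc}(J\times\R^{n+1})$ from Proposition \ref{prop:alpha}, together with \eqref{delalpha}, the embeddings \eqref{imba}--\eqref{imbc} and the bounds \eqref{orgest}. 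Differentiating the control term $c^s=c+s\,\delta c$ produces the $\delta c$ contribution.

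For the density and convective terms, I would first integrate by parts in space and time so that every time derivative and every second spatial derivative falls on $\varphi$ (this is why $\varphi\in C_c^2$ is required and why time derivatives at the interface appear only in weak form), and then split $\rho(\alpha)=\rho_2+(\rho_1-\rho_2)\alpha$. The key point is that the velocity is \emph{continuous} across the interface, $[u]=0$, so $(u^s)^\top(\partial_t\varphi+u^s\cdot\nabla\varphi)$ is a continuous function converging uniformly on compacta; hence the term carrying $\rho(\alpha^s)-\rho(\alpha)=(\rho_1-\rho_2)(\alpha^s-\alpha)$ produces, by weak$^*$ convergence of $(\alpha^s-\alpha)/s$ and \eqref{delalpha}, exactly $\int_J\int_{\R^{n+1}}(\rho_2-\rho_1)\,u^\top(\partial_t\varphi+u\cdot\nabla\varphi)\,d\delta\alpha(t)\,dt$, the third line of \eqref{weakVoF1lin}. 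The remaining part depends on $(u^s,q^s)$ only through the continuously differentiable maps of Lemma \ref{lem:uqdiff} and converges to the weak form of $\int_J\int_{\R^{n+1}}(\partial_t(\rho(\alpha)\delta u)+\Div(\rho(\alpha)(\delta u\otimes u+u\otimes\delta u)))^\top\varphi$. Similarly, on the region $\{\alpha^s=\alpha\}$ the viscous and pressure parts of the stress, written through the one-sided extensions $u^s_\pm,q^s_\pm$ and $u_\pm,q_\pm$, converge to $\int_J\int_{\R^{n+1}}S(\delta u,\delta q;\mu(\alpha)):\nabla\varphi$. These yield the first three lines of \eqref{weakVoF1lin}.

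The hard part will be the remaining contribution of the stress term: the comparison of $S(u^s,q^s;\mu(\alpha^s)):\nabla\varphi$ with $S(u,q;\mu(\alpha)):\nabla\varphi$ cannot be done by weak$^*$ convergence, because $D(u^s)$ and $q^s$ jump across the interface. I would decompose $\R^{n+1}$ into the region $\{\alpha^s=\alpha\}$, handled in the previous step, and the shell $S^s(t)$ lying between the graphs of $h(t,\cdot)$ and $h^s(t,\cdot)$. On this shell $u^s$ and $q^s$ coincide with the one-sided extensions from the phase selected by $\Gamma^s(t)$, while $u$ and $q$ coincide with the extensions from the other phase relative to $\Gamma(t)$. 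Since $p>n+3$, these one-sided states lie in $L_p(J;BUC^1(\R^{n+1}))$ respectively $L_p(J;BUC(\R^{n+1}))$ by Sobolev embedding (cf.\ \eqref{imba} and \eqref{orgest}), so that the shell integral $\frac1s\int_{S^s(t)}(\,\cdot\,)\,d(x,y)$ is evaluated exactly as in the proof of Proposition \ref{prop:alpha}: it converges to an integral over $\{y=h(t,x)\}$ against $\delta h(t,x)\,dx$, i.e.\ against $d\delta\alpha(t)$, in which $D(u^s_\pm)$ and $q^s_\pm$ are replaced by the one-sided traces of $D(u)$ and $q$ from $\Omega_1(t)$ and $\Omega_2(t)$ at $\Gamma(t)$. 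Keeping track of the sign of $h^s-h$ on the two halves of the shell, these traces reassemble into the jump $[S(u,q;\mu(\alpha))]$, so that the shell contribution equals $-\int_J\int_{\R^{n+1}}[S(u,q;\mu(\alpha))]:\nabla\varphi\,d\delta\alpha(t)\,dt$, the fourth line of \eqref{weakVoF1lin}. Carrying out this sign bookkeeping and verifying that the jump --- and not the ambient value of the stress --- is what appears is the delicate core of the argument; the trace $[S(u,q;\mu(\alpha))](t,\cdot)\in W^{1-1/p}_p(\Gamma(t))$ that makes the right-hand side meaningful is supplied by \eqref{orgest}.

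Finally, I would identify the limit of the surface tension term. By \eqref{surface}, for the perturbed data this term equals $\int_J\int_{\R^n}\sigma\frac{(\nabla h^s(t,x)^\top,-1)}{\sqrt{1+|\nabla h^s(t,x)|^2}}\bigl(D\varphi(x,h^s(t,x))-\Div(\varphi)(x,h^s(t,x))I\bigr)\binom{\nabla h^s(t,x)}{-1}\,dx\,dt$, and since $h^s\to h$ in $\E_4(t_0)\hookrightarrow C^1(\bar J;BC^1(\R^n))\cap C(\bar J;BC^2(\R^n))$ by \eqref{imbc} and Theorem \ref{thm:difforg}, and $\varphi\in C_c^2$, this is differentiable in $s$ with an explicit derivative, a continuous integrand in $h,\nabla h,\delta h,\nabla\delta h$ and the first two derivatives of $\varphi$, supported in a fixed compact set. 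On the other hand, by Lemmas \ref{lem:normal} and \ref{lem:delalph} the two $\varepsilon$-dependent integrals on the right-hand side of \eqref{weakVoF1lin} can be rewritten as integrals over $J\times\R^n$ involving $\nu_\varepsilon(t,x,h(t,x))$, $\delta\nu_\varepsilon(t,x,h(t,x))$, $\delta h$ and $\nabla\delta h$, and by the uniform convergences of $\nu_\varepsilon(t,x,h(t,x))$ and $\delta\nu_\varepsilon(t,x,h(t,x))$ from Lemmas \ref{lem:nueps} and \ref{lem:delnueps}, together with dominated convergence ($\varphi$ having compact support), the limit $\varepsilon\searrow0$ exists and coincides with the derivative just computed. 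This identifies the right-hand side of \eqref{weakVoF1lin} with the derivative of the surface tension term, and together with the previous two paragraphs it follows that $(\delta u,\delta q,\delta\alpha)$ solves \eqref{weakVoF1lin}. Combined with \eqref{weakVoF2lin} and \eqref{weakVoF3lin}, this proves the theorem.
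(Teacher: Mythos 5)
Your proposal is correct and follows essentially the same route as the paper's proof: difference quotients of the physical states based on Theorem \ref{thm:difforg} and the convergences of Lemma \ref{lem:uqdiff}, a splitting into the region where $\alpha^s=\alpha$ and the interfacial shell (with the one-sided extensions $u_\pm^s,q_\pm^s$ and the sign bookkeeping producing the jump $[S(u,q;\mu(\alpha))]$ against $d\delta\alpha$), and identification of the surface-tension derivative with the $\varepsilon\searrow 0$ limit via Lemmas \ref{lem:normal}, \ref{lem:delalph}, \ref{lem:nueps} and \ref{lem:delnueps}. The only (cosmetic) deviation is that you treat the density/convective term by the algebraic split $\rho(\alpha^s)-\rho(\alpha)$ together with the weak$^*$ convergence of $(\alpha^s-\alpha)/s$ from Proposition \ref{prop:alpha}, whereas the paper recomputes that shell integral explicitly; both yield the same limit.
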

\begin{proof}
Let $(u_0,h_0,c), (\delta u_0,0,\delta c)\in \U_u\times \U_h\times \U_c(h_0)$
be such that $(u_0,h_0,c)$ and $(u_0,h_0,c)+(\delta u_0,0,\delta c)$
satisfy the conditions of Theorem \ref{thm:difforg}.
Denote now by $(\hat u,\pi,[\pi],h)$ the unique solution of
\eqref{Ptrans} for data $(u_0,h_0,c)$ and by
$(\hat u^s,\pi^s,[\pi^s],h^s)$ the one
for data $(u_0,h_0,c)+s\,(\delta u_0,0,\delta c)$. Let
$(u,q)$ and $(u^s,q^s)$ be the corresponding states
in physical coordinates according to \eqref{trafo}  and let
$\alpha=1_{\Omega_1(t)}, \alpha^s=1_{\Omega_1^s(t)}$ be the corresponding
solutions of \eqref{transpc}. Finally, let
$(\delta u,\delta h,\delta q)$ be the directional derivatives
(sensitivities) in direction $(\delta u_0,0,\delta c)$ which exist by
Theorem \ref{thm:difforg}. By the differentiability result of
Theorem \ref{thm:difforg} we know that with the extensions
$u_\pm,q_\pm$ in \ref{upm}, see \eqref{udef}, \eqref{upmdef}, \eqref{qpmdef}
\begin{align}\label{udiff1}
 \frac{u^s-u}{s}&\to\delta u &&\mbox{in $C(\bar J;L_p(\R^{n+1};\R^{n+1}))$},\\
\label{udiff2}
 \frac{u_\pm^s-u_\pm}{s}&\to\delta u_\pm
&&\mbox{in $L_p(J;H_p^1(\R^{n+1};\R^{n+1}))$},\\
\label{qdiff}
 \frac{q_\pm^s-q_\pm}{s}&\to\delta q_\pm
&&\mbox{in $L_p(J;L_p(\R^{n+1};\R^{n+1}))$}.
\end{align}
We derive now the different terms in \eqref{weakVoF1lin}.
Let
\[
 \Omega_s=\{(t,x,y):\alpha^s=\alpha\},\quad
 \Omega_s^c=\{(t,x,y):\alpha^s(t)\ne\alpha\}
\]
We have for arbitrary $\varphi\in C_c^2(J\times \R^{n+1};\R^{n+1})$
\begin{align*}
&\int_{J\times\R^{n+1}}\!\!\!\frac{-1}{s}
\bigl((\rho(\alpha^s) u^s-\rho(\alpha) u)^\top \partial_t\varphi 
+\rho(\alpha^s) (u^s)^\top (u^s\cdot\nabla\varphi)
-\rho(\alpha) u^\top (u\cdot \nabla\varphi)\bigr)\,d(t,x,y)\\
&=\int_{\Omega_s}\frac{-1}{s}
\bigl(\rho(\alpha) (u^s-u)^\top \partial_t\varphi 
+\rho(\alpha) ((u^s)^\top (u^s\cdot\nabla\varphi)
-u^\top (u\cdot \nabla\varphi))\bigr)\,d(t,x,y)\\
&+\int_{\Omega_s^c}\frac{-1}{s}
\bigl((\rho(\alpha^s) u^s-\rho(\alpha) u)^\top \partial_t\varphi 
+\rho(\alpha^s)(u^s)^\top (u^s\cdot\nabla\varphi)
-\rho(\alpha) u^\top (u\cdot\nabla\varphi)\bigr)\,d(t,x,y)
\end{align*}
By \eqref{udiff1}, \eqref{udiff2} one obtains
\begin{align*}
&\int_{\Omega_s}\frac{-1}{s}
\bigl(\rho(\alpha) (u^s-u)^\top \partial_t\varphi 
+\rho(\alpha) ((u^s)^\top (u^s\cdot\nabla\varphi)
-u^\top (u\cdot \nabla\varphi))\bigr)\,d(t,x,y)\\
&\to\int_{J\times\R^{n+1}}-\bigl(\rho(\alpha) \delta u^\top \partial_t\varphi
+\rho(\alpha)(\delta u^\top (u\cdot\nabla\varphi)+u^\top (\delta u\cdot\nabla\varphi)
\bigr)\,d(t,x,y)\\
&=\int_{J\times\R^{n+1}}
\bigl(\partial_t (\rho(\alpha) \delta u)
+\Div(\rho(\alpha) (\delta u\otimes u+u\otimes \delta u))^\top
\varphi\,d(t,x,y).
\end{align*}
For the second summand we have by Theorem \ref{thm:difforg}
\begin{align*}
&\int_{\Omega_s^c}\frac{-1}{s}
\bigl((\rho(\alpha^s) u^s-\rho(\alpha) u)^\top \partial_t\varphi 
+\rho(\alpha^s)(u^s)^\top (u^s\cdot\nabla\varphi)
-\rho(\alpha) u^\top (u\cdot\nabla\varphi)\bigr)\,d(t,x,y)\\
&\int_{J\times \R^n}\!\!\!\frac{-1}{s}\int_{h(t,x)}^{\max(h(t,x),h^s(t,x))}
\!\!\!\!\!\!\!\!\!\!\!\!\!\!\!\!\!\!\!\!\!\bigl((\rho_1 u^s-\rho_2 u)^\top \partial_t\varphi 
+\rho_1(u^s)^\top (u^s\cdot\nabla\varphi)
-\rho_2 u^\top (u\cdot\nabla\varphi)\bigr)\,d(t,x,y)\\
&+\int_{J\times \R^n}\!\!\!\frac{-1}{s}\int_{h^s(t,x)}^{\max(h(t,x),h^s(t,x))}
\!\!\!\!\!\!\!\!\!\!\!\!\!\!\!\!\!\!\!\!\!\bigl((\rho_2 u^s-\rho_1 u)^\top \partial_t\varphi
+\rho_2(u^s)^\top (u^s\cdot \nabla\varphi)
-\rho_1 u^\top (u\cdot\nabla\varphi)\bigr)\,d(t,x,y)\\
&\to
\int_{J\times \R^n}
(\rho_2-\rho_1) u^\top\bigl(\partial_t\varphi+u\cdot\nabla\varphi
\bigr)(t,x,h(t,x))\delta h(t,x)\,d(t,x)\\
&=\int_{J}\int_{\R^{n+1}}
(\rho_2-\rho_1) u^\top\bigl(\partial_t\varphi+u\cdot\nabla\varphi
\bigr)
d\delta \alpha(t)(x,y)\,dt,
\end{align*}
where we have used \eqref{delalpha}, \eqref{imba} and \eqref{imbc} in the
last step.

For the next term in \eqref{weakVoF1lin} we note that
\begin{equation}\label{Suqconv}
\begin{split}
&\int_{J\times\R^{n+1}}
\frac{1}{s}(S(u^s,q^s;\mu(\alpha^s))-S(u,q;\mu(\alpha))):\nabla\varphi\bigr)\,d(t,x,y)\\
&=\int_{\Omega_s}
\frac{1}{s}(S(u^s-u,q^s-q;\mu(\alpha)):\nabla\varphi\,d(t,x,y)\\
&+\int_{\Omega_s^c} (S(u^s,q^s;\mu(\alpha^s))-
S(u,q;\mu(\alpha)):\nabla\varphi\,d(t,x,y).
\end{split}
\end{equation}
Now \eqref{udiff2}, \eqref{qdiff} yield
\begin{align*}
&\int_{\Omega_s}
\frac{1}{s}(S(u^s-u,q^s-q;\mu(\alpha)):\nabla\varphi\,d(t,x,y)
\to \int_{J\times\R^{n+1}} S(\delta u,\delta
q;\mu(\alpha)):\nabla\varphi\,d(t,x,y).
\end{align*}
Moreover, by using \eqref{imba} and Theorem \ref{thm:exdiff} we have
\begin{align*}
&\int_{\Omega_s^c} \frac{1}{s}(S(u^s,q^s;\mu(\alpha^s))-
S(u,q;\mu(\alpha)):\nabla\varphi\,d(t,x,y)\\
&=
\int_{J\times\R^n}\frac{1}{s}\int_{h(t,x)}^{\max(h(t,x),h^s(t,x))}
(S(u^s_-,q^s_-;\mu_1)-S(u_+,q_+;\mu_2):\nabla\varphi\,d(t,x,y)\\
&+\int_{J\times\R^n}\frac{1}{s}\int_{h^s(t,x)}^{\max(h(t,x),h^s(t,x))}
(S(u^s_+,q^s_+;\mu_2)-S(u_-,q_-;\mu_1)):\nabla\varphi\,d(t,x,y)\\
&\to -\int_{J\times\R^n} [S(u,q;\mu(\alpha))](t,x,h(t,x))\delta
h(t,x):\nabla\varphi(t,x,h(t,x))\,d(t,x)\\
&= -\int_{J}\int_{\R^{n+1}} [S(u,q;\mu(\alpha))]:\nabla\varphi\,d\delta\alpha(t)(x,y) dt.
\end{align*}
Here, we have used \eqref{delalpha} and \eqref{imba} in the
last step.

Finally, the surface tension term \eqref{surface}
has with the abbreviations
\[
  \tilde \nu(t,x)=\binom{-\nabla h(t,x)}{1},\quad \delta\tilde \nu(t,x)=\binom{-\nabla
\delta h(t,x)}{0}
\]
by Theorem \ref{thm:difforg}
and \eqref{imbc} the directional derivative
\begin{equation}\label{surfabl}
\begin{split}
&\int_{J\times\R^{n}} \sigma
\left(\frac{\delta\tilde\nu^\top}{|\tilde\nu|}-
\frac{\delta\tilde \nu^\top\tilde\nu\tilde\nu^\top}{|\tilde\nu|^3}
\right)(t,x)(D\varphi-\Div(\varphi)I)(t,x,h(t,x)) \tilde\nu(t,x)\,d(t,x)\\
&+\int_{J\times\R^{n+1}} \sigma\frac{\tilde\nu^\top}{|\tilde\nu|}
\Bigl(\partial_y(D\varphi-\Div(\varphi)I)(t,h(t,x)) \delta h(t,x)
\tilde\nu(t,x)\\
&\qquad\qquad +(D\varphi-\Div(\varphi)I)(t,h(t,x)) \delta \tilde\nu(t,x)\Bigr)\,d(t,x)
\end{split}
\end{equation}
Now the first integral on the right hand side of \eqref{weakVoF1lin}
converges to the first intergal in \eqref{surfabl} by first applying
Lemma \ref{lem:normal} and then Lemmas \ref{lem:nueps} and \ref{lem:delnueps}.
By using first Lemma \ref{lem:delalph} (note that $\nu_\varepsilon(t,x,y)$
depends close to $\Gamma(t)$ only on $x$ by \eqref{Dhbnd}, see \eqref{nueps}),
and then Lemma \ref{lem:nueps}
and the fact that $\nabla\delta h$ is continuous by \eqref{imbc},
the second integral on the right hand side of \eqref{weakVoF1lin}
converges to the second intergal in \eqref{surfabl}.

\eqref{weakVoF2lin} is obvious and \eqref{weakVoF3lin} follows by
Proposition \ref{prop:alpha}.
\end{proof}
\section{Analytical settings for the application of optimization
methods}\label{sec:4}
The results of this paper justify the application of derivative based
optimization methods. We discuss now some possible settings. While some are
canonical, the treatment of optimization problems involving the state across
the interface, in particular the pressure or
the position of the interface, requires care, since the pressure and phase indicator
field are discontinuous across the interface.

Let $p>n+3$ and $\U_{u}(h_0), \U_{c}(t_0)$ be defined by
\eqref{Udeforg}. Let $h_0\in \U_h$ with $\|h_0\|_{\U_h}$ small enough
\[
 U_{ad}:=\{ (u_0,c)\in \U_u(h_0)\times \U_{c}(t_0):
 (u_0,h_0,c) \mbox{ satisfy \eqref{comp}, \eqref{initepsorg}}\}
\]
and consider the control-to-state mapping
\[
 (u_0,c)\in U_{ad} \subset \U_u(h_0)\times \U_{c}(t_0)
 \mapsto (\hat u,\pi,[\pi],h)\in \E(t_0)
\]
given by \eqref{Ptrans},
which is differentiable at least for sufficiently small controls
by Theorem \ref{thm:difforg},
and the corresponding original state
\[
    (u,q)(t,x,y)=(\hat u,\pi)(1,x,y-h(t,x))
\]
solving \eqref{P}. Denote the solution operator by
\[
     (u,q)=({\mathcal S}^u(u_0,c),{\mathcal S}^q(u_0,c))={\mathcal S}(u_0,c).
\]
Now we are interested in optimization problems of the form
\[
  \min_{(u_0,c)} {\mathcal J}({\mathcal S}(u_0,c),u_0,c) \mbox{ s.t. }
  (u_0,c) \in U_{ad},
\]
where additional constraints would be possible. We discuss now analytical
settings, for which the continuous differentiability of the reduced
objective function $(u_0,c) \in U_{ad}\mapsto {\mathcal J}({\mathcal S}(u_0,c),u_0,c)$
is ensured.

\subsection{Objective functions involving the velocity field}\label{sec:41}
We consider first the simpler case, where the state-dependence of the
objective function involves only the velocity, i.e.,
\begin{equation}\label{Jredu}
(u_0,c) \in U_{ad}\mapsto {\mathcal J}({\mathcal S}(u_0,c),u_0,c)=
{\mathcal J}^u({\mathcal S}^u(u_0,c),u_0,c).
\end{equation}
Using the differentiability results of Theorem \ref{thm:difforg} with the space
given in \eqref{udef}, the reduced objective functional
\eqref{Jredu} is continuously differentiable, if the mapping
\[
 (u,u_0,c)\in C(\bar J; L_{\tilde p}(\R^{n+1},\R^{n+1})\times
 \U_u(h_0)\times \U_{c}(t_0) \mapsto {\mathcal J}^u(u,u_0,c)
\]
is continuously differentiable for some $\tilde p\in [p,\infty)$.
This applies for example in the case of least squares functionals or many
other types of tracking functionals. The derivative is easily obtained
by the chain rule and by using the sensitivities $\delta u$, where
$\delta u$ can be obtained by the VoF-type formulation
\eqref{weakVoF1lin} or by using the linearization of the transformed
problem \eqref{Ptrans} together with Lemma
\ref{lem:uqdiff} and Lemma \ref{lem:cdiff}.

If the objective function \eqref{Jredu} evaluates the velocity field $u$
only in an open observation domain $J\times \Omega_o$ with positive distance from
the interface $\bigcup_{t\in J} (\{t\}\times \Gamma(t))$, then 
it is by Theorem \ref{thm:difforg} sufficient if the mapping
\[
 (u,u_0,c)\in (C(\bar J; L_{\tilde p}(\R^{n+1},\R^{n+1})\cap
L_p(J;H_p^1(\Omega_o,\R^{n+1})))\times
 \U_u(h_0)\times \U_{c}(t_0) \mapsto {\mathcal J}^u(u,u_0,c)
\]
is continuously differentiable for some $\tilde p\in [p,\infty)$.

\subsection{Objective functions involving the pressure or phase
indicator}\label{sec:42}
Since the pressure field is discontinuous across the
interface, the differentiability results of Theorem \ref{thm:difforg}
apply only for extensions $q_\pm$ of the pressure across the interface.
We show now that certain types of objective functionals are nevertheless
differentiable.

We consider objective functions of the form
\begin{equation}\label{Jredq}
\begin{split}
(u_0,c) \in U_{ad} &\mapsto {\mathcal J}({\mathcal S}(u_0,c),u_0,c)\\
&={\mathcal J}^q({\mathcal S}^q(u_0,c),u_0,c)
=\int_J \int_{R^{n+1}} \ell({\mathcal S}^q(u_0,c)(t,x,y),x,y)\,d(x,y)\,dt.
\end{split}
\end{equation}
Here, $\ell: \R\times \R^n\times\R \to \R$ is a continuously differentiable
function such that there are $R>0, C_\ell>0$ with (other settings are
possible)
\begin{equation}\label{ellprop}
\begin{split}
  &\operatorname{supp}(\ell(q,\cdot))\subset \{(x,y)\in \R^n\times\R:\, |(x,y)|\le R\}\quad\forall\,q\in \R,\\
&  |\partial^2_{(q,y)} \ell(q,x,y)|\le C_\ell (1+|q|^{p-2})\quad\forall\,(x,y)\in \R^n\times\R.
\end{split}
\end{equation}
This implies the estimate
\begin{align*}
|\partial_q\ell(q,x,y)|&\le |\partial_q\ell(0,x,y)|+\left|\int_0^q
\partial^2_{qq} \ell(s,x,y)\,ds\right|
\le |\partial_q\ell(0,x,y)|+C_\ell (|q|+|q|^{p-1})\\
&\le C_\ell' (1+|q|^{p-1})
\end{align*}
with the constant $C_\ell'=\|\partial_q\ell(0,\cdot)\|_{L_\infty(\R^{n+1})}+2 C_\ell$
as well as
\begin{align}\label{ellprop3}
|\ell(q,x,y)|\le |\ell(0,x,y)|+\left|\int_0^q \partial_q \ell(s,x,y)\,ds\right|
\le C_\ell'' (1+|q|^{p})
\end{align}
with $C_\ell''=\|\ell(0,\cdot)\|_{L_\infty(\R^{n+1})}+2 C_\ell'$.

Now, using the transformation $(\hat x,\hat y)=T_{h(t)}(x,y)=(x,y-h(t,x))$
we can reqrite \eqref{Jredq} as
\begin{equation}\label{Jqvar}
\begin{split}
{\mathcal J}^q(q,u_0,c)&=
\int_J \int_{R^{n+1}} \ell(q(t,x,y),x,y)\,d(x,y)\,dt\\
&=
\int_J \int_{R^{n+1}} \ell(\pi(t,x,y-h(t,x)),x,y)\,d(x,y)\,dt\\
&=\int_J \int_{R^{n+1}} \ell(\pi(t,\hat x,\hat y),\hat x,\hat y+h(t,\hat
x))\,d(\hat x,\hat y)\,dt\\
&=\int_J \int_{\dot R^{n+1}} \ell(\pi(t,\hat x,\hat y),\hat x,\hat y+h(t,\hat
x))\,d(\hat x,\hat y)\,dt.
\end{split}
\end{equation}
We will now show the following result.
\begin{theorem}\label{thm:Jqdiff}
Let \eqref{ellprop} hold. Then with the convention \eqref{prep}
the objective function \eqref{Jredq} is continuously differentiable with
derivative
\begin{align*}
({\mathcal J}^q)'({\mathcal S}^q(u_0,c),u_0,c)\cdot \binom{\delta u_0}{\delta c}
&=\int_J \int_{\dot R^{n+1}} 
\partial_q\ell(q(t,x,y),x,y) \delta q(t,x,y)\, d(x,y)\,dt\\
&-\int_J \int_{R^{n}} [\ell(q(t,x,h(t,x)),x,h(t,x))] \delta h(t,x)\,dx\,dt\\
&=\int_J \int_{\dot R^{n+1}} 
\partial_q\ell(q(t,x,y),x,y) \delta q(t,x,y)\, d(x,y)\,dt\\
&-\int_J \int_{R^{n}} [\ell(q(t,x,h(t,x)),x,h(t,x))] d\delta\alpha(t)(x,y)\,dt,
\end{align*}
where $[\ell(q(t,x,h(t,x)),x,h(t,x))]$ is the jump across $\Gamma(t)$ at
$(x,h(t,x))$ and
the sensitivities $\delta q$, $\delta \alpha$
can be obtained by the VoF-type formulation
\eqref{weakVoF1lin} or $\delta \pi$, $\delta h$ can be computed
by using the linearization of the transformed
problem \eqref{Ptrans} together with Lemma \ref{lem:cdiff}
and $\delta q$ by using Lemma \ref{lem:uqdiff}.
\end{theorem}
\begin{remark}
If we consider an objective function ${\mathcal J}^\alpha$ of the form
\eqref{Jredq} with the pressure $q$ replaced by the phase indicator $\alpha$
then an analogue of Theorem \ref{thm:Jqdiff} holds and the derivative
simplifies to
\begin{align*}
({\mathcal J}^\alpha)'({\mathcal S}^\alpha(u_0,c),u_0,c)\cdot \binom{\delta u_0}{\delta c}
&=-\int_J \int_{R^{n}} [\ell(\alpha(t,x,h(t,x)),x,h(t,x))] d\delta\alpha(t)(x,y)\,dt,
\end{align*}
since $\delta\alpha$ has its support on the complement of $\dot\R^{n+1}$.
\end{remark}
We use the following auxiliary result.
\begin{lemma}\label{lem:Jq}
Let \eqref{ellprop} hold. Then the mapping
\begin{align*}
&(\pi,h)\in L_p(J;L_\infty(\R^{n+1}))\times L_p(J;L_\infty(\R^{n}))\\
&\mapsto \widehat{\mathcal J}^q(\pi,h):=
\int_J \int_{\dot R^{n+1}} \ell(\pi(t,\hat x,\hat y),\hat x,\hat y+h(t,\hat
x))\,d(\hat x,\hat y)\,dt
\end{align*}
is continuously differentiable with derivative
\begin{align*}
 (\widehat{\mathcal J}^q)'(\pi,h)\cdot \binom{\delta\pi}{\delta h}
=\int_J \int_{\dot R^{n+1}} \binom{\partial_q\ell}{\partial_y\ell}(\pi(t,\hat x,\hat y),\hat x,
\hat y+h(t,\hat x))^\top \binom{\delta \pi(t,\hat x,\hat y)}{\delta h(t,\hat x)}\,d(\hat x,\hat y)\,dt.
\end{align*}
\end{lemma}
\begin{proof}
We have by using \eqref{ellprop} and its consequence \eqref{ellprop3}
\begin{align*}
 \widehat{\mathcal J}^q(\pi,h)&\le
\int_J \int_{\dot R^{n+1}} |\ell(\pi(t,x,y-h(t,x)),x,y)|\,d(x,y)\,dt\\
&\le \int_J \int_{|(x,y|\le R} C_\ell'' (1+|\pi(t,x,y-h(t,x))|^p)\,d(x,y)\,dt\\
&\le C_\ell'' (2R)^{n+1} \int_J (1+\|\pi(t,\cdot)\|_{L_\infty(\R^{n+1})}^p)
\le C_\ell'' (2R)^{n+1} (t_0+\|\pi\|_{L_p(J;L_\infty(\R^{n+1}))}^p).
\end{align*}
%
To show the differentiability, we note that by Taylor expansion 
and \eqref{ellprop3}
\begin{align*}
&\Biggl|\ell((\pi+\delta\pi)(t,\hat x,\hat y),\hat x,\hat y+(h+\delta h)(t,\hat x)))
-\ell(\pi(t,\hat x,\hat y),\hat x,\hat y+h(t,\hat x)))\\
&-\binom{\partial_q\ell}{\partial_y\ell}(\pi(t,\hat x,\hat y),\hat x,
\hat y+h(t,\hat x))^\top \binom{\delta \pi(t,\hat x,\hat y)}{\delta h(t,\hat
x)}\Biggr|\\
&\le C_\ell (1+(|\pi(t,\hat x,\hat y)|+|\delta \pi(t,\hat x,\hat y)|)^{p-2})
(|\delta \pi(t,\hat x,\hat y)|^2+|\delta h(t,\hat x)|^2).
\end{align*}
Hence, we obtain with the asserted derivative
\begin{align*}
&\left|\widehat{\mathcal J}^q(\pi+\delta\pi,h+\delta h)-\widehat{\mathcal J}^q(\pi,h)
-(\widehat{\mathcal J}^q)'(\pi,h)\cdot \binom{\delta\pi}{\delta h}\right|\\
&\le \int_J \int_{\dot\R^{n+1}}
C_\ell (1+(|\pi(t,\hat x,\hat y)|+|\delta \pi(t,\hat x,\hat y)|)^{p-2})
(|\delta \pi(t,\hat x,\hat y)|^2+|\delta h(t,\hat x)|^2)\,d(\hat x,\hat y)\,dt\\
&\le (2R)^{n+1} C_\ell \int_J (1+\||\pi(t,\cdot)|+|\delta\pi(t,\cdot)|\|_{L_\infty}^{p-2})
\cdot(\|\delta \pi(t,\cdot)\|_{L_\infty}^2+
\|\delta h(t,\cdot)\|_{L_\infty}^2)\,dt\\
&\le
(2R)^{n+1} C_\ell(t_0^{(p-2)/p}+\||\pi|+|\delta\pi|\|_{L_p(J;L_\infty)}^{p-2})
(\|\delta \pi\|_{L_p(J;L_\infty)}^2+
\|\delta h\|_{L_p(J;L_\infty)}^2)\\
&=O((\|\delta \pi\|_{L_p(J;L_\infty)}^2+\|\delta h\|_{L_p(J;L_\infty)}^2)),
\end{align*}
where we have applied H\"older's inequality in the last step.
The continuity of the derivative follows with \eqref{ellprop3}
by very similar calculations.
\end{proof}
\begin{proof} (of Theorem \ref{thm:Jqdiff})
From Theorem \ref{thm:exdiff}, Theorem \ref{thm:embed} and Lemma \ref{lem:cdiff} we know that
\[
(u_0,c) \in U_{ad}\mapsto (\hat u,\pi,[\pi],h) \in \E(t_0)
\]
is continuously differentiable and with the convention \eqref{prep}
the mapping $(\hat u,\pi,[\pi],h) \in \E(t_0) \mapsto \pi\in
L_p(J;H_p^1(\dot\R^{n+1}))$ is linear and continuous by
\eqref{ppoin}. Moreover, by $p>n+3$ we have clearly
$L_p(J;H_p^1(\dot\R^{n+1}))\hookrightarrow L_p(J;L_\infty(\R^{n+1}))$
and $\E_4(t_0) \hookrightarrow L_p(J;L_\infty(\R^{n}))$. Hence,
the mapping
\[
(u_0,c) \in U_{ad}\mapsto (\pi,h) \in L_p(J;L_\infty(\R^{n+1}))\times L_p(J;L_\infty(\R^{n}))
\]
is continuously differentiable and thus \eqref{Jredq} is continuously differentiable
by Lemma \ref{lem:Jq} and the last representation of \eqref{Jredq}
in \eqref{Jqvar}. The derivative is given by Lemma \ref{lem:Jq}.
Using that
\begin{align*}
\frac{d}{d\hat y} \ell(\pi(t,\hat x,\hat y),\hat x,\hat y+h(t,\hat x))
&=\partial_y\ell(\pi(t,\hat x,\hat y),\hat x,\hat y+h(t,\hat x))\\
&+\partial_q\ell(\pi(t,\hat x,\hat y),\hat x,\hat y+h(t,\hat x))
\partial_{\hat y}\pi(t,\hat x,\hat y),
\end{align*}
integration by parts yields
\begin{align*}
& (\widehat{\mathcal J}^q)'(\pi,h)\cdot \binom{\delta\pi}{\delta h}
=\int_J \int_{\dot R^{n+1}} \binom{\partial_q\ell}{\partial_y\ell}(\pi(t,\hat x,\hat y),\hat x,
\hat y+h(t,\hat x))^\top \binom{\delta \pi(t,\hat x,\hat y)}{\delta h(t,\hat x)}\,d(\hat x,\hat y)\,dt\\
&=\int_J \int_{\dot R^{n+1}} 
\partial_q\ell(\pi(t,\hat x,\hat y),\hat x,\hat y+h(t,\hat x))
(-\partial_{\hat y}\pi(t,\hat x,\hat y) \delta h(t,\hat x)+
\delta\pi(t,\hat x,\hat y))\, d(\hat x,\hat y)\,dt\\
&-\int_J \int_{R^{n}} [\ell(\pi(t,\hat x,0),\hat x,h(t,\hat x))] \delta h(t,\hat x)\,d\hat x\,dt\\
&=\int_J \int_{R^{n+1}} 
\partial_q\ell(q(t,x,y),x,y) \delta q(t,x,y)\, d(\hat x,\hat y)\,dt\\
&-\int_J \int_{R^{n}} [\ell(q(t,x,h(t,x)),x,h(t,x))] \delta h(t,x)\,dx\,dt.
\end{align*}
Here, we have used Lemma \ref{lem:uqdiff} and
\eqref{delalpha} in the last step.
\end{proof}

\subsection*{Acknowledgments}
The work of Johannes
Haubner and Michael Ulbrich was supported by the Deutsche
Forschungsgemeinschaft (DFG, German Research Foundation) -- Project Number
188264188/GRK1754 -- as part of the International Research Training Group IGDK
1754 Optimization and Numerical Analysis for Partial Differential Equations
with Nonsmooth Structures. The work of Elisabeth Diehl and Stefan Ulbrich
was funded by the Deutsche Forschungsgemeinschaft (DFG, German Research Foundation)
-- Project-ID 265191195 -- SFB 1194 Interaction between Transport and Wetting Processes,
Project B04.
\bibliography{ref}
\bibliographystyle{abbrv}
\end{document}